 \newcounter{enunciato}[section]
 \newtheorem{ittheorem}{Theorem}
 \newtheorem{itlemma}{Lemma}
 \newtheorem{itproposition}{Proposition}
 \newtheorem{itcorollary}{Corollary}
 \newtheorem{itdefinition}{Definition}
 \newtheorem{itremark}{Remark}
 \newtheorem{itclaim}{Claim}
 \newtheorem{itfact}{Fact}
 \newtheorem{itconjecture}{Conjecture}
 \newenvironment{theorem}{\addtocounter{enunciato}{1}
 \begin{ittheorem}}{\end{ittheorem}}
 \newenvironment{lemma}{\addtocounter{enunciato}{1}
 \begin{itlemma}}{\end{itlemma}}
 \newenvironment{proposition}{\addtocounter{enunciato}{1}
 \begin{itproposition}}{\end{itproposition}}
 \newenvironment{corollary}{\addtocounter{enunciato}{1}
 \begin{itcorollary}}{\end{itcorollary}}
 \newenvironment{definition}{\addtocounter{enunciato}{1}
 \begin{itdefinition}}{\end{itdefinition}}
 \newenvironment{remark}{\addtocounter{enunciato}{1}
 \begin{itremark}}{\end{itremark}}
 \newenvironment{claim}{\addtocounter{enunciato}{1}
 \begin{itclaim}}{\end{itclaim}}
 \newenvironment{fact}{\addtocounter{enunciato}{1}
 \begin{itfact}}{\end{itfact}}
 \newenvironment{conjecture}{\addtocounter{enunciato}{1}
 \begin{itconjecture}}{\end{itconjecture}}
 \newcommand{\be}[1]{\begin{equation}\label{#1}}
 \newcommand{\ee}{\end{equation}}
 \newcommand{\bl}[1]{\begin{lemma}\label{#1}}
 \newcommand{\el}{\end{lemma}}
 \newcommand{\br}[1]{\begin{remark}\label{#1}}
 \newcommand{\er}{\end{remark}}
 \newcommand{\bt}[1]{\begin{theorem}\label{#1}}
 \newcommand{\et}{\end{theorem}}
 \newcommand{\bd}[1]{\begin{definition}\label{#1}}
 \newcommand{\ed}{\end{definition}}
 \newcommand{\bcl}[1]{\begin{claim}\label{#1}}
 \newcommand{\ecl}{\end{claim}}
 \newcommand{\bfact}[1]{\begin{fact}\label{#1}}
 \newcommand{\efact}{\end{fact}}
 \newcommand{\bp}[1]{\begin{proposition}\label{#1}}
 \newcommand{\ep}{\end{proposition}}
 \newcommand{\bc}[1]{\begin{corollary}\label{#1}}
 \newcommand{\ec}{\end{corollary}}
 \newcommand{\bcj}[1]{\begin{conjecture}\label{#1}}
 \newcommand{\ecj}{\end{conjecture}}
 \newcommand{\bpr}{\begin{proof}}
 \newcommand{\epr}{\end{proof}}
 \newcommand{\bprlem}[1]{\begin{proofof}{\it Lemma \ref{#1}}.\,\,}
 \newcommand{\eprlem}{\end{proofof}}
 \newcommand{\bprthm}[1]{\begin{proofof}{\it Theorem \ref{#1}}.\,\,}
 \newcommand{\eprthm}{\end{proofof}}
 \newcommand{\bi}{\begin{itemize}}
 \newcommand{\ei}{\end{itemize}}
 \newcommand{\ben}{\begin{enumerate}}
 \newcommand{\een}{\end{enumerate}}
 \newenvironment{proof}{\noindent {\em Proof}.\,\,}{\hspace*{\fill}$\halmos$\medskip}
 \newenvironment{proofof}{\noindent {\em Proof of\,\,}}{\hspace*{\fill}$\halmos$\medskip}
 \newcommand{\halmos}{\rule{1ex}{1.4ex}}
 \newcommand{\one}{{\mathchoice {1\mskip-4mu\mathrm l}
         {1\mskip-4mu\mathrm l}
         {1\mskip-4.5mu\mathrm l}
         {1\mskip-5mu\mathrm l}}}
\def \C {{\mathbb C}}
\def \E {{\mathbb E}}
\def \N {{\mathbb N}}
\def \P {{\mathbb P}}
\def \Q {{\mathbb Q}}
\def \R {{\mathbb R}}
\def \Z {{\mathbb Z}}
\def \ba {\begin{array}}
\def \ea {\end{array}}
\def \lra {\longrightarrow}
\def \Q {{\mathbb{Q}}}
\def \T {{\mathbb{T}}}
\def \lra {{\leftrightarrow}}
\def\one{\rlap{\mbox{\small\rm 1}}\kern.15em 1}
\begin{document}
\title{Metastable Densities for the Contact Process on Power Law Random Graphs}

\author{Thomas Mountford\textsuperscript{1}, Daniel Valesin\textsuperscript{2,3}
and Qiang Yao\textsuperscript{4}}

\footnotetext[1]{\'Ecole Polytechnique F\'ed\'erale de Lausanne,
D\'epartement de Math\'ematiques,
1015 Lausanne, Switzerland}
\footnotetext[2]{University of British Columbia, Department of Mathematics, V6T1Z2 Vancouver, Canada}
\footnotetext[3]{Research funded by the Post-Doctoral Research Fellowship of the Government of Canada}
\footnotetext[4]{Department of Statistics and Actuarial Science,
East China Normal University,
Shanghai 200241, China}

\date{December 06, 2012}
\maketitle

\begin{abstract}
We consider the contact process on a random graph with fixed degree distribution given by a power law. We follow the work of Chatterjee and Durrett \cite{CD}, who showed that for arbitrarily small infection parameter $\lambda$, the survival time of the process is larger than a stretched exponential function of the number of vertices, $n$. We obtain sharp bounds for the typical density of infected sites in the graph, as $\lambda$ is kept fixed and $n$ tends to infinity. We exhibit three different regimes for this density, depending on the tail of the degree law.\medskip\\
\noindent MSC: 82C22, 05C80. Keywords:
  contact process, random graphs.
\end{abstract}

{\bf\large{}}\bigskip


\section{Introduction}

\label{Int}

In this paper we study the contact process on a random graph with a fixed degree distribution equal to a power law. Let us briefly describe the contact process and the random graph we consider.

The contact process is an interacting particle system that is commonly taken as a model for the spread of an infection in a population. Given a locally finite graph $G = (V, E)$ and $\lambda > 0$, the contact process on $G$ with infection rate $\lambda$ is a Markov process $(\xi_t)_{t \geq 0}$ with configuration space $\{0,1\}^V$. Vertices of $V$ (also called sites) are interpreted as individuals, which can be either healthy (state 0) or infected (state 1). The infinitesimal generator for the dynamics is
\begin{equation}\Omega f(\xi) = \sum_{x \in V}\left(f(\phi_x\xi) - f(\xi) \right) + \lambda \sum_{\substack{{(x,y):}\\{\{x, y\} \in E}}} \left(f(\phi_{(x,y)}\xi)-f(\xi) \right),\label{eq:gen}\end{equation}
where
$$\phi_x\xi(z) = \left\{\begin{array}{ll}\xi(z),&\text{if } z \neq x;\\0,&\text{if } z = x;\end{array}\right.\qquad \phi_{(x,y)}\xi(z) = \left\{\begin{array}{ll}\xi(z), &\text{if }z\neq y;\\I_{\left\{\max\left(\xi(x),\;\xi(y)\right) = 1\right\}},&\text{if } z = y. \end{array} \right.$$
Here and in the rest of the paper, $I$ denotes the indicator function. Given $A \subset V$, we will write $(\xi^A_t)$ to denote the contact process with the initial configuration $I_A$. If $A = \{x\}$, we write $(\xi_t^x)$. Sometimes we abuse notation and identify the configuration $\xi_t$ with the set $\{x: \xi_t(x) = 1\}$.

We refer the reader to \cite{lig85} and \cite{lig99} for an elementary treatment of the contact process and proofs of the basic properties that we will now review.

The dynamics given by the generator (\ref{eq:gen}) has two forms of transition. First, infected sites become healthy at rate 1; a recovery is then said to have occurred. Second, given an ordered pair of sites $(x, y)$ such that $x$ is infected and $y$ is healthy, $y$ becomes infected at rate $\lambda$; this is called a transmission. 

We note that the configuration in which all individuals are healthy is absorbing for the dynamics. The random time at which this configuration is reached, $\inf\{t: \xi_t = \varnothing\}$ is called the extinction time of the process. A fundamental question for the contact process is: is this time almost surely finite? The answer to this question depends of course on the underlying graph $G$ and on the rate $\lambda$, but not on the initial configuration $\xi_0$, as long as $\xi_0$ contains a finite and non-zero quantity of infected sites. If, for one such $\xi_0$ (and hence all such $\xi_0$), the extinction time is almost surely finite, then the process is said to die out; otherwise it is said to survive. Using the graphical construction described below, it is very simple to verify that on finite graphs, the contact process dies out.

In order to make an analogy with the contact process on the random graphs we are interested in, it will be useful for us to briefly look at known results for the contact process on the $d$-dimensional lattice $\Z^d$ and on finite boxes of $\Z^d$. The contact process on $\Z^d$ exhibits a phase transition: there exists $\lambda_c(\Z^d)\in (0,\infty)$ such that the process dies out if and only if $\lambda \leq \lambda_c$. The process is said to be subcritical, critical and supercritical respectively if $\lambda < \lambda_c,\;\lambda = \lambda_c$ and $\lambda > \lambda_c$. In the supercritical case, if the process is started with every site infected, then as $t \to \infty$ its distribution converges to a non-trivial invariant measure on $\{0,1\}^{\Z^d}$ called the upper invariant distribution; we denote it by $\bar \pi$.

Interestingly, this phase transition is also manifest for the contact process on finite subsets of the lattice. Let $\Gamma_n = \{1,\ldots, n\}^d$ and consider the contact process on $\Gamma_n$ with parameter $\lambda$ starting from all infected, $(\xi^{\Gamma_n}_t)$. As mentioned above, this process almost surely dies out. However, the expected extinction time grows logarithmically with $n$ when $\lambda < \lambda_c(\Z^d)$ and exponentially with $n$ when $\lambda > \lambda_c(\Z^d)$ (\cite{durliu}, \cite{tommeta}). In the latter case, metastability is said to occur, because the process persists for a long time in an equilibrium-like state which resembles the restriction of $\bar \pi$ to the box $\Gamma_n$, and eventually makes a quick transition to the true equilibrium - the absorbing state. In particular, if $(t_n)$ is a sequence of (deterministic) times that grows to infinity slower than the expected extinction times of $(\xi^{\Gamma_n}_t)_{t\geq 0}$, we have
\begin{equation}\frac{|\xi^{\Gamma_n}_{t_n}|}{n^d} \;\;\stackrel{n \to \infty}{\xrightarrow{\hspace*{0.8cm}}}\;\; \bar \pi\left(\left\{\xi\in\{0,1\}^{\Z^d}: \xi(0) = 1\right\}\right) \text{ in probability,}\label{eq:motivate}\end{equation}
where $|\cdot|$ denotes cardinality. This means that the density of infected sites in typical times of activity is similar to that of infinite volume.

The main theorem in this paper is a statement analogous to (\ref{eq:motivate}) for the contact process on a class of random graphs, namely the configuration model with power law degree distribution, as described in \cite{NSW} and \cite{remco}. Let us define these graphs. We begin with a probability measure $p$ on $\mathbb{N}$; this will be our degree distribution. We assume it satisfies
\begin{eqnarray}
&p(\{0, 1, 2\}) = 0;&\label{eq:ele1}\\[0.3cm]
&\text{for some }a > 2,\;{\displaystyle 0 < \liminf_{m \to \infty}m^a\;p(m) \leq \limsup_{m \to \infty}m^ap(m) < \infty.}&\label{eq:ele2}\end{eqnarray}
The first assumption, that $p$ is supported on integers larger than 2, is made to guarantee that the graph is connected with probability tending to 1 as $n \to \infty$ (\cite{CD}). The second assumption, that $p$ is a power law with exponent $a$, is based on the empirical verification that real-world networks have power law degree distributions; see \cite{Dur} for details.

For fixed $n \in \N$, we will construct the random graph $G_n = (V_n, E_n)$ on the set of $n$ vertices $V_n = \{v_1, v_2,\ldots, v_n\}$. To do so, let $d_1, \ldots, d_n$ be independent with distribution $p$. We assume that $\sum_{i=1}^nd_i$ is even; if it is not, we add 1 to one of the $d_i$, chosen uniformly at random; this change will not have any effect in any of what follows, so we will ignore it. For $1 \leq i \leq n$, we endow $x_i$ with $d_i$ \textit{half-edges} (sometimes also called \textit{stubs}). Pairs of half-edges are then matched so that edges are formed; since $\sum_{i=1}^n d_i$ is even, it is possible to match all half-edges, and an edge set is thus obtained. We choose our edge set $E_n$ uniformly among all edge sets that can be obtained in this way. We denote by $\P_{p,n}$ a probability measure under which $G_n$ is defined. If, additionally, a contact process with parameter $\lambda$ is defined on the graph, we write $\P^\lambda_{p,n}$.

\noindent \textbf{Remark.}
$G_n$ may have loops (edges that start and finish at the same vertex) and multiple edges between two vertices. As can be read from the generator in (\ref{eq:gen}), loops can be erased with no effect in the dynamics, and when vertices $x$ and $y$ are connected by $k$ edges, an infection from $x$ to $y$ (or from $y$ to $x$) is transmitted with rate $k\lambda$.

In \cite{CD}, Chatterjee and Durrett studied the contact process $(\xi^{V_n}_t)$ on $G_n$, and obtained the surprising result that it is ``always supercritical'': for any $\lambda > 0$, the extinction time grows quickly with $n$ (it was shown to be larger than a stretched exponential function of $n$). This contradicted predictions in the Physics literature to the effect that there should be a phase transition in $\lambda$ similar to the one we described for finite boxes of $\Z^d$. In \cite{MMVY}, the result of \cite{CD} was improved and the extinction time was shown to grow as an exponential function of $n$.

As already mentioned, our main theorem is concerned with the density of infected sites on the graph at times in which the infection is still active. The main motivation in studying this density is shedding some light into the mechanism through which the infection manages to remain active for a long time when its rate is very close to zero. In particular, our result shows that this mechanism depends on the value of $a$, the exponent of the degree distribution.
\begin{theorem}
\label{thm:main} There exist $c, C > 0 $ such that, for $\lambda>0$ small enough and $(t_n$) with $t_n \to \infty$ and $\log t_n = o(n)$, we have
$$\P_{p, n}^\lambda \left(c\rho_a(\lambda) \leq \frac{|\xi^{V_n}_{t_n}|}{n} \leq C\rho_a(\lambda) \right) \stackrel{n \to \infty}{\xrightarrow{\hspace*{0.8cm}}} 1,$$
where $\rho_a(\lambda)$ is given by
$$\rho_a(\lambda)  = \left\{\begin{array}{ll} \lambda^{\frac{1}{3-a}} &\text{if } 2 < a \leq 2\frac{1}{2}; \medskip\\\frac{\lambda^{2a-3}}{\log^{a-2}\left(\frac{1}{\lambda}\right)} &\text{if } 2\frac{1}{2} < a \leq 3;\medskip\\\frac{\lambda^{2a-3}}{\log^{2a-4}\left(\frac{1}{\lambda}\right)} &\text{if } a > 3.\end{array} \right.$$
\end{theorem}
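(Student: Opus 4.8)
The plan is to establish matching upper and lower bounds for the metastable density, treating the three regimes of $a$ separately insofar as the extremal configurations differ, but organizing the argument around a common principle: the infection is essentially carried by high-degree vertices (``stars'') and the paths connecting them, and the density is governed by how large a star needs to be in order to sustain the infection locally for a long time. First I would recall (or reprove, following \cite{CD} and \cite{MMVY}) the key local estimate: a star with center of degree $m$, once infected, retains the infection for a time that is exponential in $\lambda^2 m$ (up to logarithmic corrections), since the center reinfects its leaves and a positive fraction of the $\approx \lambda m$ infected leaves keep the center infected. Thus the relevant threshold degree is of order $m^\star \sim \lambda^{-2}$ (modulo logs), and stars of degree $\gtrsim m^\star$ act as long-lived local reservoirs, while smaller stars are only transiently infected and contribute to transmission between the large stars.

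For the \textbf{upper bound} I would argue that in the metastable regime the infected set is, with high probability, dominated by a ``coupling from infinity'' type construction: run the process from all sites infected and show that after time $O(\log n)$ the density has dropped below $C\rho_a(\lambda)$. The mass of infected sites then splits into (i) infected leaves of large stars, whose expected number per large star of degree $m$ is $O(\lambda m)$, and (ii) sites infected along connecting paths or in small stars, which decay geometrically in the path length because each transmission step costs a factor $\lambda$ against a recovery. Summing $\lambda m$ against the power-law tail $p(m)\asymp m^{-a}$ over $m \gtrsim \lambda^{-2}$ produces $\sum_{m \gtrsim \lambda^{-2}} \lambda m \cdot m^{-a} \asymp \lambda \cdot (\lambda^{-2})^{2-a} = \lambda^{2a-3}$; the logarithmic corrections in the definition of $\rho_a$ come from tracking the $\log(1/\lambda)$ factors in the true survival threshold $m^\star \sim \lambda^{-2}\log(1/\lambda)$ and from the second-moment/edge-counting estimates, and the change in exponent at $a = 5/2$ reflects whether the sum $\sum m^{2-a}$ is dominated by its lower endpoint or converges. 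The upper bound on the total count also requires controlling the contribution of vertices of intermediate degree and of the connected-path structure of the configuration model, for which I would use the standard fact that $G_n$ locally looks like a Galton--Watson tree with the size-biased offspring law.

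For the \textbf{lower bound} I would exhibit an explicit surviving subconfiguration achieving density $\gtrsim c\rho_a(\lambda)$. The strategy, adapted from \cite{CD}: among the $n$ vertices there are, w.h.p., order $n \sum_{m \gtrsim m^\star} p(m) \asymp n (m^\star)^{1-a}$ stars of degree above threshold, and these stars are connected to each other through short paths (length $O(\log n / \log\log n)$ or even $O(1)$ in denser regimes) so that the infection, once lit on one of them, spreads to a positive fraction of all of them within time $o(t_n)$ and is sustained there for time $\gg t_n$. Conditioned on this ``backbone'' of lit large stars being active, each contributes in equilibrium an expected $\Theta(\lambda m)$ infected leaves, and a second-moment argument over the leaves gives concentration, yielding $|\xi^{V_n}_{t_n}| \gtrsim c \, n \sum_{m \gtrsim m^\star} \lambda m\, p(m) \asymp c\, n \rho_a(\lambda)$. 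One must also check the process started from a single vertex (or from all of $V_n$) actually reaches this backbone state with probability tending to $1$, which follows from the survival results already in the literature together with an argument that the infection, before dying on small components, finds a large star.

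The main obstacle, I expect, is the precise matching of constants and in particular the logarithmic factors across the three regimes: getting the exponent of $\log(1/\lambda)$ right requires a sharp (two-sided) analysis of the survival time of an isolated star and of a star-of-stars, keeping track of how the $\lambda m$ infected leaves of a large star feed into neighboring stars, and this is delicate because the relevant events are rare and the naive first-moment heuristic $m^\star \sim \lambda^{-2}$ must be sharpened to $m^\star \sim \lambda^{-2}\log(1/\lambda)$ with the correct power. The second genuine difficulty is the case $2 < a \le 5/2$, where the degree law is so heavy that a few extremely high-degree vertices (degree up to $n^{1/(a-1)}$) dominate: there the density is $\lambda^{1/(3-a)}$ rather than $\lambda^{2a-3}$, which signals that the binding constraint is no longer ``how many stars exceed threshold'' but rather the interplay between the single largest star's leaf count and the global edge budget $\sum d_i \asymp n$; handling this requires a separate truncation argument controlling the top order statistics of the degree sequence and showing the infected leaves of the giant stars already saturate the claimed density while higher-order terms do not overshoot it.
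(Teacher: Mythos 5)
Your first-moment heuristic for $a>2\tfrac12$ (long-lived stars above a degree threshold, each carrying $\Theta(\lambda m)$ infected leaves) does reproduce the paper's intuition, but as a proof it has a structural gap. The paper does not count infected sites directly: it first shows, via duality and the local weak convergence of $G_n$ to the two-stage Galton--Watson tree, that $|\xi^{V_n}_{t_n}|/n$ converges in probability to $\upgamma_p(\lambda)$, the survival probability of the contact process started at the root of that tree (Theorem \ref{thm:reduc}), and then all of the real work is a two-sided, $n$-free estimate $\upgamma_p(\lambda)\asymp\rho_a(\lambda)$ (Proposition \ref{prop:main}). Your proposal replaces this reduction by two unproved claims which are exactly the hard content: for the upper bound, that starting from all infected the density falls to $C\rho_a(\lambda)$ and that only leaves of above-threshold stars plus geometrically decaying path terms contribute; for the lower bound, that a lit backbone of above-threshold stars carries $\Theta(\lambda m)$ infected leaves each and nothing needs to be subtracted. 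To make the upper bound work you must in particular rule out a $\Theta(\lambda m)$ leaf-load on stars with degree between $\lambda^{-2}$ and the true threshold --- otherwise the density would be of order $\lambda^{2a-3}$ with no logarithmic factor, exceeding $C\rho_a(\lambda)$ --- and this is precisely what Sections \ref{s:extestimate}--\ref{s:upper} of the paper do (supermartingale bounds for infection paths on bounded-degree trees, extinction estimates on stars, the re-rooting Lemma \ref{lem:compTrees}, and Lemma \ref{lem:redTom}, which produces the $\log^{-(a-2)}$, resp.\ $\log^{-2(a-2)}$, factor). Moreover you work with a single threshold $m^\star\sim\lambda^{-2}\log(1/\lambda)$, whereas the two different logarithmic exponents in $\rho_a$ come from two different thresholds, $\lambda^{-2}\log(1/\lambda)$ for $2\tfrac12<a\le3$ (big vertices clustered) versus $\lambda^{-2}\log^2(1/\lambda)$ for $a>3$ (big vertices isolated at distance $\asymp\log(1/\lambda)$); deferring "getting the exponent of $\log(1/\lambda)$ right" defers the statement of the theorem itself.

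For $2<a\le2\tfrac12$ your proposed mechanism is not just incomplete but wrong. The limiting density is an $n$-independent constant of order $\lambda^{1/(3-a)}$, so it cannot be "saturated by the infected leaves of the giant stars" of degree up to $n^{1/(a-1)}$: those contribute $O(\lambda n^{1/(a-1)})=o(n)$ sites, a vanishing fraction, and the global edge budget $\sum_i d_i\asymp n$ plays no role in the limit. The correct mechanism, as in Subsection 4.3 of the paper, is purely local and does not use the star bootstrap at all: on the limit tree one dominates from below by the process in which a vertex, once recovered, can never be reinfected; the induced branching process of first infections has offspring tail $\gtrsim(\lambda/k)^{a-2}$, hence infinite mean, and its survival probability is $\gtrsim\lambda^{(a-2)/(3-a)}$, which multiplied by the probability $\asymp\lambda$ that the root transmits before recovering gives the lower bound $c\lambda^{1/(3-a)}$; the matching upper bound is a path-counting estimate (Lemma \ref{lembound}) on the tree truncated at degree $\asymp\lambda^{-1/(3-a)}$, not a control of the top order statistics of the degree sequence. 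Without the duality/local-limit reduction and without this branching argument, the case $2<a\le2\tfrac12$ of your plan would not go through.
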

This theorem solves the open problem of \cite{CD}, page 2337, for $a > 2$. For $2 < a \leq 3$, the result is new, as no estimates were previously available. For $a > 3$, it is an improvement of the non-optimal bounds that were obtained in \cite{CD}: there, it proved that for $a > 3$, the density is between $\lambda^{2a-3+\epsilon}$ and $\lambda^{a-1-\epsilon}$ for any $\epsilon > 0$, when $\lambda$ is small.

Very recently (\cite{remco2}), Dommers, Giardin\`a and van der Hofstad studied the ferromagnetic Ising model on random trees and locally tree-like random graphs with power law degree distributions (in particular, their results cover the class of graphs we consider in this paper). In their context, the above theorem, which is about the exponent of the metastable density of the contact process when $\lambda \to 0$, translates to studying the exponent of the magnetization of the Ising model as $\beta \to \beta_c$, where $\beta$ is the inverse temperature and $\beta_c$ its critical value. Similarly to the above theorem, they showed how this exponent depends on the exponent of the degree distribution, and this dependence also exhibits different regimes.

By a well-known property of the contact process called duality (see \cite{lig85}, Section III.4), for any $t > 0$ and $1 \leq i \leq n$ we have
\begin{equation}\P_{p,n}^\lambda\left(\xi^{V_n}_t(v_i) = 1 \right) = \P_{p,n}^\lambda\left(\xi^{v_i}_t\neq \varnothing \right).\label{eq:dualint}\end{equation}
On the right-hand side, we have the probability that the contact process started at $v_i$ is still active at time $t$. In the study of this probability, we are required to understand the local structure of $G_n$ around a typical vertex. This is given, in the limit as $n \to \infty$, by a two-stage Galton-Watson tree.

In order to precisely state this, let $q$ be the size-biased distribution associated to $p$, that is, the measure on $\N$ given by $q(m) = (\sum_{i \geq 0} i \cdot p(i))^{-1}\cdot  m\cdot  p(m)$ (note that the assumption that $a > 2$ implies that $\sum_{i \geq 0} i \cdot p(i) < \infty$). Let $\Q_{p,q}$ be a probability measure under which a Galton-Watson tree is defined with degree distribution of the root given by $p$ and degree distribution of all other vertices given by $q$. Note that, since $p(\{0,1,2\}) = q(\{0,1,2\}) =0$, this tree is infinite. We emphasize that we are giving the \textit{degree} distribution of vertices, and not their \textit{offspring} distribution, which is more commonly used for Galton-Watson trees. The following Proposition then holds; see \cite{CD} and Chapter 3 of \cite{Dur} for details. For a graph $G$ with vertex $x$ and $R > 0$, we denote by $B_G(x, R)$ the ball in $G$ with center $x$ and radius $R$.

\begin{proposition}
\label{lem:kGW}
For any $k \in \mathbb{N}$ and $R > 0$, as $n \to \infty$, the $k$ balls $B_{G_n}(v_1, R),\ldots, B_{G_n}(v_k, R)$ under $\P_{p,n}$ are disjoint with probability tending to 1. Moreover, they jointly converge in distribution to $k$ independent copies of $B_\T(o, R)$, where $\T$ is a Galton-Watson tree (with root $o$) sampled from the probability $\Q_{p,q}$.
\end{proposition}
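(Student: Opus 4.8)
The plan is to run a breadth-first exploration of $G_n$ that reveals the random pairing of half-edges one pair at a time, starting simultaneously from $v_1,\dots,v_k$, and to couple it with the construction of $k$ independent two-stage Galton--Watson trees. Conditionally on the degrees $(d_1,\dots,d_n)$, the configuration model may be built by listing all $L_n:=\sum_{i=1}^n d_i$ half-edges and repeatedly pairing an arbitrary unmatched half-edge with a uniformly chosen unmatched partner; the law of the resulting multigraph does not depend on the order in which unmatched half-edges are processed, so we may process them in breadth-first order from the roots. Concretely, we keep a set of \emph{active} half-edges (those incident to an already-discovered vertex lying at distance $<R$ from some $v_i$ and not yet matched); at each step we pair one active half-edge with a uniform unmatched partner, discover the vertex $w$ it lies on, and, if $w$ is new and within distance $R$, add its remaining $d(w)-1$ half-edges to the active set. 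We stop when no active half-edge remains; the explored subgraph then contains $\bigcup_{i=1}^k B_{G_n}(v_i,R)$.

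Two elementary inputs drive the argument. First, conditionally on the degrees and on the exploration so far, when an active half-edge is matched the probability that its partner lies on a given as-yet-untouched vertex $v_j$ equals $d_j/(L_n-2\ell-1)$, where $\ell$ is the number of pairs already formed; since the exploration has so far touched only a bounded number of half-edges, the probability of landing on a vertex of degree $m$ is $\bigl(1+o_P(1)\bigr)\,m\,\#\{j\le n:d_j=m\}/L_n$. Second, by the strong law of large numbers, and using $a>2$ so that $\bar d:=\sum_m m\,p(m)<\infty$, one has $L_n/n\to\bar d$ and $\#\{j\le n:d_j=m\}/n\to p(m)$ almost surely for each $m$, whence the previous probability converges to $m\,p(m)/\bar d=q(m)$; the root $v_i$ itself has degree exactly $d_i$, and the $d_i$ are i.i.d.\ with law $p$.

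Next I would control the size of the exploration and rule out cycles. Each ball $B_{G_n}(v_i,R)$ is almost surely finite, and a routine domination argument (the degree of each discovered non-root vertex is stochastically controlled, up to negligible corrections, by the size-biased law $q$, and a Galton--Watson process of depth $R$ with descendant law $q$ has an almost surely finite total size even when that law has infinite mean) shows that the number $M_n$ of half-edges touched by the exploration is tight: $M_n=O_P(1)$. Hence at each of the at most $M_n$ matching steps the number of active or already-discovered half-edges is $O_P(1)$, and by the first input above the conditional probability that the chosen partner is one of them---precisely the event that a cycle is closed in the explored subgraph, or that two of the explorations meet---is $O_P(1/n)$. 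Summing over the $O_P(1)$ steps, the probability that any cycle is ever formed, or that the $k$ explorations fail to be pairwise disjoint, tends to $0$ as $n\to\infty$. On the complementary event the explored subgraph is a disjoint union of $k$ trees rooted at $v_1,\dots,v_k$, and by the two inputs each such tree is generated by giving the root $d_i\sim p$ children and each non-root vertex at depth $<R$ a number of children equal to its degree minus one, the degree having conditional law converging to $q$; moreover, since over a bounded exploration we sample $O_P(1)$ vertices without replacement from $n-O_P(1)$, these degrees are asymptotically independent of one another and of the root degrees. Assembling these facts, the joint law of $\bigl(B_{G_n}(v_1,R),\dots,B_{G_n}(v_k,R)\bigr)$ under $\P_{p,n}$ converges to that of $k$ independent copies of $B_\T(o,R)$ with $\T\sim\Q_{p,q}$, which gives both assertions of the proposition.

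The step I expect to be most delicate is the regime $2<a\le3$, where $q$ has infinite mean: although $\E\,|B_\T(o,R)|=\infty$ in that case, one must verify that $B_\T(o,R)$ is nonetheless almost surely finite---this is true because each vertex has an almost surely finite (if heavy-tailed) degree---so that $M_n=O_P(1)$ genuinely holds and the half-edge depletion and cycle-creation errors really do vanish. Beyond that, turning the ``asymptotically i.i.d.\ $\sim q$'' heuristic into a proof, i.e.\ controlling the corrections coming from the finitely many previously used half-edges and from sampling without replacement, requires the almost-sure convergences above together with a routine but somewhat lengthy conditioning argument.
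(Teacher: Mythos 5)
Your argument is correct and is essentially the standard one: the paper itself gives no proof of this proposition, deferring to \cite{CD} and Chapter 3 of \cite{Dur}, where precisely this breadth-first exploration of the half-edge matching, the $O_P(1/n)$ bound on closing a cycle or merging two explorations, and the convergence of the empirical size-biased degree law to $q$ are used to couple the $k$ balls with independent two-stage Galton--Watson trees. Your flagged delicate point (tightness of the explored volume when $2<a\le 3$, where $q$ has infinite mean) is handled exactly as you indicate, by finiteness of each ball of the limiting tree rather than by any moment bound, so no genuine gap remains.
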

(Obviously, in the above, there is nothing special about the vertices $v_1,\ldots, v_k$ and the result would remain true if, for each $n$, they were replaced by $v_{i_{n,1}}, \ldots, v_{i_{n,k}}$, with $1 \leq i_{n,1} < \cdots < i_{n,k} \leq n$).

With this convergence at hand, in \cite{CD}, the right-hand side of (\ref{eq:dualint}) (and then, by a second moment argument, the density of infected sites) is shown to be related to the probability of survival of the contact process on the random tree given by the measure $\Q_{p,q}$. In this paper, we make this relation more precise, as we now explain. We denote by $\Q_{p,q}^\lambda$ a probability measure under which the two-stage Galton-Watson tree described above is defined and a contact process of rate $\lambda$ is defined on the tree. Typically this contact process will be started from only the root infected, and will thus be denoted by $(\xi_t^o)_{t \geq 0}$. Let $\upgamma_p(\lambda)$ denote the survival probability for this process, that is,
$$\upgamma_p(\lambda) = \Q_{p,q}^\lambda \left(\xi^o_t \neq \varnothing \;\forall t \right).$$
As we will shortly discuss in detail, this quantity turns out to be positive for every $\lambda > 0$. Here we prove
\begin{theorem}
\label{thm:reduc}
For any $\lambda > 0,\;\epsilon > 0$ and $(t_n)$ with $t_n \to \infty$ and $\log t_n = o(n)$, we have
$$\P_{p,n}^\lambda \left(\left|\frac{|\xi^{V_n}_{t_n}|}{n} - \upgamma_p(\lambda) \right| > \epsilon\right) \stackrel{n \to \infty}{\xrightarrow{\hspace*{0.8cm}}} 0.$$
\end{theorem}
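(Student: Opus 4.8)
The plan is to prove the convergence in probability by a first- and second-moment argument, using the self-duality of the contact process to express everything through survival probabilities of processes issued from one or two vertices, and then feeding in the local weak convergence of Proposition~\ref{lem:kGW} together with the long-survival estimates of \cite{CD} and \cite{MMVY}. By duality \reff{eq:dualint} and the exchangeability of $v_1,\ldots,v_n$ under $\P_{p,n}$, one has $\E_{p,n}^\lambda[\,|\xi^{V_n}_{t_n}|\,]/n=\P_{p,n}^\lambda(\xi^{v_1}_{t_n}\neq\varnothing)$, while applying duality to each pair $(v_i,v_j)$ (two dual processes coupled through the graphical construction) gives $\E_{p,n}^\lambda[\,|\xi^{V_n}_{t_n}|^2\,]/n^2=\tfrac1n\,\P_{p,n}^\lambda(\xi^{v_1}_{t_n}\neq\varnothing)+\tfrac{n-1}{n}\,\P_{p,n}^\lambda(\xi^{v_1}_{t_n}\neq\varnothing,\ \xi^{v_2}_{t_n}\neq\varnothing)$. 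Hence it suffices to show $\P_{p,n}^\lambda(\xi^{v_1}_{t_n}\neq\varnothing)\to\upgamma_p(\lambda)$ and $\P_{p,n}^\lambda(\xi^{v_1}_{t_n}\neq\varnothing,\ \xi^{v_2}_{t_n}\neq\varnothing)\to\upgamma_p(\lambda)^2$ as $n\to\infty$: the first gives convergence of the rescaled mean, the two together give convergence of the rescaled variance to $0$, and Chebyshev's inequality then yields the theorem.

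For the upper bound on $\P_{p,n}^\lambda(\xi^{v_1}_{t_n}\neq\varnothing)$, fix a time $T$ and use $\{\xi^{v_1}_{t_n}\neq\varnothing\}\subseteq\{\xi^{v_1}_T\neq\varnothing\}$ once $t_n\ge T$. Split the latter event according to whether the infection has left the ball $B_{G_n}(v_1,R)$ by time $T$. On the complement, $(\xi^{v_1}_s)_{s\le T}$ coincides with the contact process restricted to $B_{G_n}(v_1,R)$, whose time-$T$ survival probability is a bounded, locally constant functional of the rooted ball, and hence, by Proposition~\ref{lem:kGW}, converges to $\Q_{p,q}^\lambda(\xi^{o,B_\T(o,R)}_T\neq\varnothing)\le\Q_{p,q}^\lambda(\xi^o_T\neq\varnothing)$. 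The remaining event is that the infection propagates graph-distance $R$ within time $T$, and a speed estimate --- bounding, after truncating the degrees to control the heavy tail of $p$, the expected number of length-$R$ infection paths realised by time $T$ --- shows that its probability tends to $0$ as $R\to\infty$, uniformly in $n$. Therefore $\limsup_n\P_{p,n}^\lambda(\xi^{v_1}_{t_n}\neq\varnothing)\le\Q_{p,q}^\lambda(\xi^o_T\neq\varnothing)$ for every $T$, and since $\Q_{p,q}^\lambda(\xi^o_T\neq\varnothing)\downarrow\upgamma_p(\lambda)$ as $T\to\infty$, we get $\limsup_n\P_{p,n}^\lambda(\xi^{v_1}_{t_n}\neq\varnothing)\le\upgamma_p(\lambda)$.

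The matching lower bound is where I expect the real work to lie, since it is here that the metastability of the process must be invoked: an infection from $v_1$ that is merely alive at a large but fixed time $T$ has to be shown to persist all the way up to time $t_n$. The idea is to fix a quantitative notion of ``favourable configuration at time $T$'' --- e.g.\ the infection having reached and spread around some vertex whose local neighbourhood mimics deep survival in $\T$ --- designed so that, on one hand, Proposition~\ref{lem:kGW} together with the monotone limit of the previous paragraph guarantees this configuration is present with probability at least $\upgamma_p(\lambda)-\epsilon$ once $T$ is large, and, on the other hand, on this event one can couple the process (restricted to a suitable finite subgraph of $G_n$) with a configuration whose extinction time is known from \cite{CD} and \cite{MMVY} to exceed $t_n$ with probability tending to $1$ (their extinction-time lower bounds being $e^{cn}$, while $\log t_n=o(n)$). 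This yields $\liminf_n\P_{p,n}^\lambda(\xi^{v_1}_{t_n}\neq\varnothing)\ge\upgamma_p(\lambda)-\epsilon$ for every $\epsilon>0$. Making precise the heuristic ``the infection either dies out inside its local neighbourhood or else reaches the self-sustaining high-degree core, after which it outlives $t_n$'', and in particular choosing the notion of favourable configuration so that local weak convergence applies on one side and the survival machinery on the other, is the technical heart of the argument.

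For the two-point limit, Proposition~\ref{lem:kGW} gives that $B_{G_n}(v_1,R)$ and $B_{G_n}(v_2,R)$ are disjoint with probability tending to $1$ and jointly converge to two independent Galton--Watson balls; the same localization-plus-speed argument as above then yields $\P_{p,n}^\lambda(\xi^{v_1}_T\neq\varnothing,\ \xi^{v_2}_T\neq\varnothing)\to\Q_{p,q}^\lambda(\xi^o_T\neq\varnothing)^2$ for fixed $T$, and running the two single-vertex arguments in parallel --- for the lower bound, using that once each infection has reached the core the two coexist up to time $t_n$ --- gives $\P_{p,n}^\lambda(\xi^{v_1}_{t_n}\neq\varnothing,\ \xi^{v_2}_{t_n}\neq\varnothing)\to\upgamma_p(\lambda)^2$. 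Feeding the two limits into the variance computation above and applying Chebyshev's inequality completes the proof.
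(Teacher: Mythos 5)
Your overall architecture (duality, localization to balls, Proposition~\ref{lem:kGW}, then the CD/MMVY survival machinery for persistence up to $t_n$) is the right one, but two of your concrete steps do not hold up. First, the upper bound: your ``speed estimate'' --- that the probability of the infection leaving $B_{G_n}(v_1,R)$ by a \emph{fixed} time $T$ tends to $0$ as $R\to\infty$, uniformly in $n$ --- is false precisely in the heavy-tail regime $2<a\leq 3$. There the size-biased law $q$ has infinite mean, so the expected number of vertices at distance $R$ (even at distance $2$) from the root is infinite and the path-counting bound cannot be closed; worse, on the limiting tree the infection can be relayed through vertices of rapidly growing degree and reach distance $R$ within time $T$ with probability bounded away from $0$ as $R\to\infty$ (an explosion-type phenomenon), so the quantity you need to be small simply is not. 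Truncating degrees does not repair this, because crossing a vertex of degree above any fixed truncation level is exactly the mechanism by which the infection spreads, and that event is not negligible. The paper avoids the issue by never cutting at a fixed time: it uses the time-unrestricted exit event $\mathcal{L}(o,R)=\{(o,0)\lra B(o,R)^c\times\R_+\}$, which is still a local (in space) event so Proposition~\ref{lem:kGW} applies, satisfies $\Q^\lambda_{p,q}(\mathcal{L}(o,R))\downarrow\upgamma_p(\lambda)$ as $R\to\infty$, and whose complement forces the infection to survive until $t_n\to\infty$ inside a fixed finite ball, an event of vanishing probability (these are the variables $\overline X_{n,i}$, $Y_{n,i}$ in the Appendix). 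Relatedly, the paper never needs your two-point limit $\P^\lambda_{p,n}(\xi^{v_1}_{t_n}\neq\varnothing,\xi^{v_2}_{t_n}\neq\varnothing)\to\upgamma_p(\lambda)^2$: it sandwiches the survival indicators between local indicators and uses only that covariances of the local events vanish, which follows directly from the joint convergence and asymptotic disjointness of the two balls in Proposition~\ref{lem:kGW}.

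Second, and more importantly, the lower bound --- which you yourself identify as the heart of the theorem --- is left as a plan rather than a proof. The content that is missing is exactly what the paper's Appendix supplies: (a) the explicit choice of the ``favourable configuration'', namely $\mathcal{N}(o,R,R^2)$ of \reff{eq:defN} (within distance $R$ the infection reaches a vertex of degree $>R^2$ and infects a fraction $\gtrsim\lambda/(16e)$ of its neighbours), together with the tree-side estimate $\Q^\lambda_{p,q}(\mathcal{N}(o,R,R^2))>\upgamma_p(\lambda)-\epsilon$ (Lemmas~\ref{lem:aphelp} and~\ref{lem:compFinEv}, which rest on the star-survival Lemma~\ref{basic} and the relay Lemma~\ref{lem:infNei}); and (b) the graph-side persistence statement, Lemma~\ref{lem:NGn}: conditionally on $\mathcal{N}(v_1,R,R^2)$, the infection survives to $t_n$ with high probability. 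Proving (b) requires showing that $G_n$ whp contains a ``bridge'' of vertices of increasing degree from $v_1$ up to degree $n^{\delta}$ (Lemmas~\ref{lem:echainb} and~\ref{lem:echain}, via an exploration of the half-edge matching with no early cycles), pushing the infection along this bridge with Lemma~\ref{lem:infNei} (this is why $\mathcal{N}$ must hand you a positive fraction of infected neighbours of a high-degree vertex, not merely survival at time $T$), and only then invoking the CD/MMVY extinction-time bound (Lemma~\ref{lem:SurCD}). None of these steps is routine, and your proposal does not indicate how the favourable event is to be chosen so that it is simultaneously local enough for Proposition~\ref{lem:kGW} and strong enough to launch the bridge argument; as written, the proposal therefore has a genuine gap at the decisive step, in addition to the flawed upper-bound localization described above.
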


The above result was conjectured in \cite{CD}, page 2336, and is an improvement of their Theorem 1 (also note that we do not assume that $\lambda$ is small). Since the proof is essentially a careful rereading of the arguments in \cite{CD}, we postpone it to the Appendix. Our main focus in the paper will be finding the asymptotic behaviour of $\upgamma_p(\lambda)$ as $\lambda \to 0$:
\begin{proposition}
\label{prop:main}
There exist $c, C > 0$ such that $c\rho_a(\lambda)\leq \upgamma_p(\lambda) \leq C\rho_a(\lambda)$ for $\lambda$ small enough, where $\rho_a(\lambda)$ is the function defined in the statement of Theorem \ref{thm:main}.\end{proposition}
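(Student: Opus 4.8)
The plan is to prove Proposition \ref{prop:main} by establishing the upper and lower bounds on $\upgamma_p(\lambda)$ separately, with the three ranges of $a$ handled by a common scheme but different bookkeeping. The analytic workhorse, which I would set up first, is a quantitative description of the contact process on a finite star $S_m$ (a centre joined to $m$ leaves): started from the centre infected, if $\lambda^2 m$ exceeds a large absolute constant then, with probability bounded below, the process remains active for a time of order $\exp(c\lambda^2 m)$, and throughout that period it keeps of order $\lambda m$ leaves infected and the centre infected a positive fraction of the time. This is essentially the estimate behind \cite{CD} and \cite{MMVY}, proved by sandwiching the number of infected leaves between birth-and-death chains with drift toward $\asymp \lambda m$. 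I then fix a degree threshold $m^\ast=m^\ast(\lambda)$, of order $\lambda^{-2}\log(1/\lambda)$ up to a regime-dependent constant and chosen so that a star with $m^\ast$ leaves stays active for a time that is polynomially large in $1/\lambda$, and call a vertex of $\T$ a \emph{hub} if its degree is at least $m^\ast$; by the power-law hypothesis \reff{eq:ele2}, under $\Q_{p,q}$ a non-root vertex is a hub with probability $\asymp (m^\ast)^{2-a}$.

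For the lower bound I would show that, with probability at least of order $\rho_a(\lambda)$, the infection started at the root reaches a hub \emph{and} then survives forever. That the second event has probability bounded away from $0$ given the first is a renormalization statement: during the long active period of a freshly ignited hub the infection explores enough of the surrounding tree to ignite, with probability bounded below, a number of further hubs whose mean exceeds $1$, so that the process of successfully ignited hubs dominates a supercritical branching process. The three regimes enter here and in the estimate of the probability of reaching the first hub. For $a>3$ the size-biased law $q$ has finite mean and the relevant exploration is a genuinely subcritical branching-type process, so reaching a hub from the root costs, up to constants, one factor of $\lambda$ (the first transmission, governed by $p$ rather than $q$) times $(m^\ast)^{2-a}$; for $5/2<a\le 3$ the mean of $q$ is infinite and one must use moments of $q$ truncated at $m^\ast$, which is what produces the logarithmic corrections; and for $2<a\le 5/2$ the tail is heavy enough that the dominant route to a hub is a short \emph{climbing} chain $v_0,v_1,\dots$ in which each $v_{i+1}$ is a neighbour of $v_i$ of much larger degree, and optimizing the degree increments along such a chain against the cost of pushing the infection along it is what yields the exponent $1/(3-a)$. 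Each of these reduces to a first-moment computation on the corresponding exploration or climbing process, combined with the star lemma to convert ``the infection reaches a hub'' into ``a star ignites''.

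For the upper bound I would prove $\upgamma_p(\lambda)\le C\rho_a(\lambda)$ by a first-moment argument. Survival forever forces the infection to reach, through a chain of transmissions in the graphical construction, a structure capable of sustaining itself — a hub, or a short cluster of nearby vertices whose degrees add up past $m^\ast$ — and one bounds the probability of this by summing, over candidate target structures and over infection paths leading to them, a suitable weight. Equivalently, I would seek a nonnegative weight $w(x)$ on the vertices of $\T$, depending on the degree of $x$ and on the degrees of its neighbours, such that $M_t:=\sum_{x\in\xi_t}w(x)$ is a supermartingale for the contact process, such that on the survival event $M_t\ge 1$ for all large $t$ (the infection being forced to revisit hubs, which are assigned weight $\ge 1$), and such that the expectation of $M_0=w(o)$ under $\Q^\lambda_{p,q}$ is at most $C\rho_a(\lambda)$; taking expectations and applying Fatou's lemma then yields the bound. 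In both formulations it is the power-law tail that renders the relevant sum of order $\rho_a(\lambda)$, with the three cases again distinguished by whether the pertinent degree-moment series converges outright, converges only after truncation at $m^\ast$, or must be summed as a climbing series.

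I expect the principal obstacle to be the upper bound, and specifically the design of the vertex weights (equivalently, the choice of which infection paths to sum over): the transmission transitions are the dangerous ones, since a single transmission can carry the infection onto a very high-degree, high-weight vertex, and because the degrees of a vertex's children are heavy-tailed one cannot use a weight depending on a vertex's own degree alone — the recursive, neighbourhood-dependent structure of $w$ must be tuned so that the supermartingale inequality survives while the initial expected weight stays as small as $\rho_a(\lambda)$. A secondary difficulty is the renormalization step in the lower bound near the threshold values $a=5/2$ and $a=3$, where the exact powers of $\log(1/\lambda)$ are pinned down and where one must verify that the active lifetime of a hub is genuinely long enough, relative to the time needed to explore out to the next hubs, for the branching process of ignited hubs to be supercritical uniformly in small $\lambda$.
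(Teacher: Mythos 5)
The decisive gap is in your choice of the hub threshold: you fix a single $m^\ast(\lambda)$ of order $\lambda^{-2}\log(1/\lambda)$ (up to a constant) for all $a>\tfrac52$, ``chosen so that a star with $m^\ast$ leaves stays active for a time polynomially large in $1/\lambda$''. With that choice your scheme cannot produce the statement for $a>3$, and the renormalization step in your lower bound actually fails there. When $a>3$ the size-biased offspring law has finite mean $\nu$, so vertices of degree $\geq m^\ast$ have density $\asymp(m^\ast)^{-(a-2)}$ and the nearest one to a freshly ignited hub is typically at distance of order $\log(1/\lambda)$; carrying the infection across such a distance costs about $(2\lambda)^{c\log(1/\lambda)}=e^{-\Theta(\log^2(1/\lambda))}$ per attempt (cf.\ Lemma \ref{lembound}$(iv.)$), while a star with $C\lambda^{-2}\log(1/\lambda)$ leaves only survives for a time polynomial in $1/\lambda$. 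Hence the expected number of further hubs ignited during its lifetime is superpolynomially small, and the claim that ``the process of successfully ignited hubs dominates a supercritical branching process'' is false for this threshold. This is exactly why the paper introduces the larger ``huge'' threshold $\asymp\lambda^{-2}\log^2(1/\lambda)$ when $a>3$ (the choice of $K_1$ in Section \ref{s:lower}, and of $M'$ before Lemma \ref{lem:redTom}), and it is the sole source of the two different logarithmic powers in $\rho_a$: $\lambda\cdot q[m^\ast,\infty)$ is $\lambda^{2a-3}\log^{-(a-2)}(1/\lambda)$ for threshold $\lambda^{-2}\log(1/\lambda)$, but $\lambda^{2a-3}\log^{-(2a-4)}(1/\lambda)$ for threshold $\lambda^{-2}\log^{2}(1/\lambda)$. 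Your bookkeeping, which attributes the logarithms to truncated moments of $q$ when $\tfrac52<a\leq3$ and gives $\lambda(m^\ast)^{2-a}$ when $a>3$, would yield $\log^{-(a-2)}$ in both regimes and so does not match $\rho_a$ for $a>3$; nowhere does your plan identify the distance-between-hubs obstruction that forces the bigger threshold, which is the heart of the $a>3$ case on both the lower- and upper-bound sides (Proposition \ref{prop:redTom}).

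Two further points. Your upper bound is deferred to an unconstructed object (a recursive vertex weight $w$ with $\E\,w(o)\leq C\rho_a(\lambda)$, a supermartingale property, and $M_t\geq1$ on survival); you correctly flag this as the principal obstacle, but it is precisely the hard part and is not how the paper argues: the paper combines path-weight supermartingales valid only on trees with all degrees $\leq\frac{1}{8\lambda^2}$ (Lemma \ref{lembound}), extinction estimates on stars and on trees with one high-degree vertex applied over time blocks (Lemmas \ref{lem:extStar} and \ref{lem:extTree}), a truncated-tree first-moment case analysis, a re-rooting lemma for the conditional law of the tree seen from the high-degree neighbour (Lemma \ref{lem:compTrees}), and the two-threshold argument of Proposition \ref{prop:redTom}; in particular ``$M_t\geq1$ on survival'' is not available without such block-extinction estimates. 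Finally, for $2<a\leq\tfrac52$ your climbing-chain heuristic does point at the exponent $\frac{1}{3-a}$, but extracting the bound with only a constant-factor loss requires care at the fixed point of the climbing recursion; the paper instead compares with a one-shot infection process whose generations form an infinite-mean branching process and reads the survival probability off its generating function (Lemmas \ref{l:1} and \ref{l:2}), with no high-degree ``hub'' mechanism at all in that regime.
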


Theorem \ref{thm:main} immediately follows from Theorem \ref{thm:reduc} and Proposition \ref{prop:main}. Since we concentrate our efforts in proving Proposition \ref{prop:main}, in all the remaining sections of the paper (except the Appendix) we do not consider the random graph $G_n$. Rather, we study the contact process on the Galton-Watson tree started with the root infected, $(\xi^o_t)_{t \geq 0}$.

We will now describe the ideas behind the proof of the above proposition. 

\noindent $\bullet$ \textbf{Case} ${\bm a\;} {\bf > 3}$. Fix a small $\lambda > 0$. On the one hand, we show that a tree in which every vertex has degree smaller than $\frac{1}{8\lambda^2}$ is a ``hostile environment'' for the spread of the infection, in the sense that an infection started at vertex $x$ eventually reaches vertex $y$ with probability smaller than $(2\lambda)^{d(x,y)}$, where $d$ denotes graph distance (see Lemma \ref{lembound}). On the other hand, we show that if a vertex has degree much larger than $\frac{1}{\lambda^2}$, then it can sustain the infection for a long time; roughly, if $\deg(x) > \frac{K}{\lambda^2}$, then the infection survives on the star graph defined as $x$ and its neighbours for a time larger than $e^{cK}$ with high probability, where $c$ is a universal constant (see Lemma \ref{basic}). This is due to a ``bootstrap effect'' that occurs in this star, in which whenever $x$ becomes infected, it transmits the infection to several of its neighbours, and whenever it recovers, it receives the infection back from many of them.

Let us now call a vertex \textit{small} or \textit{big} depending on whether its degree is below or above the  $\frac{1}{8\lambda^2}$ threshold, respectively (this terminology is only used in this Introduction). It is natural to imagine that the infection can propagate on the infinite tree by first reaching a big vertex, then being maintained around it for a long time and, during this time, reaching another vertex of still higher degree, and so on. However, when $a > 3$, big vertices are typically isolated and at distance of the order of $\log \frac{1}{\lambda}$ from each other. This suggests the introduction of another degree threshold, which turns out to be of the order of $\frac{1}{\lambda^2}\log^2 \frac{1}{\lambda}$ -- let us call a vertex \textit{huge} if its degree is above this threshold. The point is that if a vertex is big but not huge, then although it maintains the infection for a long time, this time is not enough for the infection to travel distances comparable to $\log \frac{1}{\lambda}$, and hence not enough to reach other big sites. Huge vertices, in comparison, do maintain the infection for a time that is enough for distances of order $\log\left( \frac{1}{\lambda}\right)$ to be overcome.

With these ideas in mind, we define a key event $E^* = \{$the root has a huge neighbour $x^*$ that eventually becomes infected$\}$ (again, this terminology is exclusive to this Introduction). We think of $E^*$ as the ``best strategy'' for the survival of the infection. Indeed, in Section \ref{s:lower} we show that if $E^*$ occurs, then the infection survives with high probability and in Section \ref{s:upper} we show that every other way in which the infection could survive has probability of smaller order, as $\lambda \to 0$, than that of $E^*$. The probability of $E^*$ is roughly $q\left(\left[\frac{1}{\lambda^2}\log^2 \frac{1}{\lambda} ,\;\infty\right) \right)\cdot \lambda$, the first term corresponding to the existence of the huge neighbour and the second term to its becoming infected. Since $p(m) \asymp m^{-a}$ (that is, $m^a\cdot p(m)$ is bounded from above and below), we have $q(m) \asymp m^{-(a-1)}$ and $q([m,\infty)) \asymp m^{-(a-2)}$; using this, we see that, modulo constants, $q\left(\left[ \frac{1}{\lambda^2}\log^2 \frac{1}{\lambda} ,\;\infty\right) \right)\cdot \lambda$ is $\frac{\lambda^{2a-3}}{\log^{2a-4}\left(\frac{1}{\lambda}\right)}$, which is the definition of $\rho_a(\lambda)$ when $a > 3$.

\noindent $\bullet$ \textbf{Case} ${\bf 2\frac{1}{2} <}$ ${\bm a}$ ${\bf\leq 3}$. This case is very similar to the previous one. The main difference is that now, when growing the tree from the root, if we find a vertex of large degree $K$, then with high probability, it will have a child with degree larger than $K$ (or a grandchild in the case $a = 3$). As a consequence, defining small and big vertices as before, big vertices will no longer be in isolation, but will rather be close to each other. For this reason, once the infection reaches a big site, the distance it needs to overcome to reach another big site is small, and we have to modify the ``big-huge'' threshold accordingly. The new threshold is shown to be $\frac{1}{\lambda^2}\log \frac{1}{\lambda}$. The key event $E^*$ is then defined in the same way as before, and shown to have probability of the order of $\rho_a(\lambda)$.

\noindent $\bullet$ \textbf{Case} ${\bf 2 <}$ ${\bm a}$ ${\bf\leq 2\frac{1}{2}}$. In both previous cases, the ``bootstrap effect'' that we have described is crucial. Interestingly, it does not play an important role in the regime in which the tree is the largest, that is, $2 < a \leq 2\frac{1}{2}$. In this case, the survival of the infection does not at all depend on vertices of high degree sustaining the infection around them for a long time, as we now explain. We define a comparison process $(\eta_t)$ which is in all respects identical to the contact process $(\xi_t^o)$, with the only exception that once sites become infected and recover for the first time, they cannot become infected again. Thus, $(\xi^o_t)$ stochastically dominates $(\eta_t)$, that is, both processes can be constructed in the same probability space satisfying the condition $\xi_t^o(x) \geq \eta_t(x)$ for all $x$ and $t$. The process $(\eta_t)$ is much easier to analyse than the contact process, and we find a lower bound for the probability that it remains active at all times (and thus a lower bound for the survival probability of the contact process). We then give an upper bound for the survival probability of $(\xi^o_t)$ that matches that lower bound. This implies that, in the regime $2 < a \leq 2\frac{1}{2}$, as $\lambda \to 0$, the survival probabilities of $(\xi^o_t)$ and $(\eta_t)$ are within multiplicative constants of each other.

Finally, let us describe the organization of the paper. Section \ref{s:setup} contains a description of the graphical construction of the contact process and of the notation we use. In Section \ref{s:survestimate} we establish a lower bound for the survival time of the process on star graphs (Lemma \ref{basic}) and, as an application, a result that gives a condition for the process to go from one vertex $x$ of high degree to another vertex $y$ on a graph (Lemma \ref{lem:infNei}). In Section \ref{s:lower}, we use these results to prove the lower bound in Proposition \ref{prop:main}. In Section \ref{s:extestimate}, we give an upper bound on the probability that the process spreads on a tree of bounded degree (depending on $\lambda$), and in Section \ref{s:upper} we apply this to obtain the upper bound in Proposition \ref{prop:main}. In the Appendix, we prove Theorem \ref{thm:reduc}.

\section{Setup and notation}
\label{s:setup}
\subsection{Graphical construction of the contact process}
In order to fix notation, we briefly describe the graphical construction of the contact process. Let $G=(V,E)$ be a graph and $\lambda > 0$. We take a probability measure $P^\lambda_G$ under which we have a family $H$ of independent Poisson point processes on $[0,\infty)$ as follows:
$$\begin{aligned}
&\{D_x: x \in V\} \text{ with rate 1};\\
&\{D_{x,y}: \{x, y\} \in E\}\text{ with rate } \lambda.
\end{aligned}$$
The elements of the random sets $D_x$ are called recoveries, and those of the sets $D_{x,y}$ are called transmissions. The collection $H$ is called a graphical construction for the contact process on $G$ with rate $\lambda$. Given $x, y \in V$ and $0 \leq t_1 \leq t_2$, an infection path from $(x,t_1)$ to $(y, t_2)$ is a piecewise constant, right-continuous function $\gamma:[t_1,t_2] \to V$ satisfying $\gamma(t_1) = x,\; \gamma(t_2) = y$ and, for all $t$, \\
$\bullet\;$ if $\gamma(t-) \neq \gamma(t), \text{ then } \{\gamma(t-),\gamma(t)\} \in E \text{ and }t \in D_{\gamma(t-), \gamma(t)};$\\
$\bullet\;$  if $\gamma(t) = z,$ then $t \notin D_z$.\\
If such a path exists, we write $(x,t_1)\; \lra\; (y,t_2)$. Given $A,B \subset V$, $J_1, J_2 \subset [0,\infty)$, we write $A \times J_1 \;\lra\; B \times J_2$ if $(x, t_1) \;\lra\; (y, t_2)$ for some $x \in A,\; y\in B,\; t_1 \in J_1$ and $t_2 \in J_2$ with $t_1 \leq t_2$. Given a set $U \subset V$ with $A, B \subset U$, we say that $A \times J_1 \;\lra\; B \times J_2$ inside $U$ if $A \times J_1 \;\lra\; B \times J_2$ by an infection path that only visits vertices of $U$.

For $A \subset V$, by letting $\xi^A_t(x) = I_{\{A\times\{0\} \;\lra\;(x,t)\}}$ for each $t \geq 0$, we get a process $(\xi^A_t)_{t\geq 0}$ that has the same distribution as the contact process with initial configuration $I_A$, as defined by the generator (\ref{eq:gen}). A significant advantage of this construction is that, in a single probability space, we obtain contact processes with all initial configurations, $\left((\xi^A_t)_{t \geq 0} \right)_{A \subset V}$ with the property that for every $A$, $\xi^A_t = \cup_{x \in A}\;\xi^x_t$ and in particular, if $A \subset B$, we have $\xi^A_t \subset \xi^B_t$ for every $t$.

\subsection{Remarks on the laws $p$ and $q$}
Recall that our assumptions on the degree distribution $p$ are that $p(\{0,1,2\}) = 0$ and, for some $a > 2,\;c_0,\; C_0 > 0$ and large enough $k$, we have $c_0 k^{-a} < p(k) < C_0 k^{-a}$. The fact that $a > 2$ implies that
$\mu:=\sum_{k=1}^\infty kp(k) < \infty$
and that the size-biased distribution $q$ is well-defined. In case $a > 3$ we also have $\nu := \sum_{k=1}^\infty kq(k) < \infty$. We may and often will assume that the constants $c_0, C_0$ also satisfy, for large enough $k$,
\begin{eqnarray}
\label{c0a-1} &&p[k, \infty),\; q(k) \in (c_0k^{-(a-1)},\;C_0k^{-(a-1)});\\
\label{c0a-2} &&q[k, \infty) \in (c_0k^{-(a-2)},\;C_0k^{-(a-2)});\\
\label{c03-a} &&\sum_{k=0}^m k q(k) \in \left\{\begin{array}{ll}\;(c_0m^{3-a},\;C_0m^{3-a})&\text{if } 2 < a < 3;\smallskip\\\;(c_0\log(m),\;C_0\log(m)) &\text{if } a = 3.\end{array}\right.
\end{eqnarray}

\subsection{Notation}
For ease of reference, here we summarize our notation. Some of the points that follow were already mentioned earlier in the Introduction.

Given a graph $G$ and $\lambda > 0$, $P_G^\lambda$ denotes a probability measure for a graphical construction of the contact process on $G$ with rate $\lambda$. Under this measure, we can consider the contact process $(\xi^A_t)_{t\geq 0}$ on $G$ with any initial configuration $I_A$.

Unless otherwise stated, Galton-Watson trees are denoted by $\T$ and their root by $o$. Their degree distribution (or distributions in the case of two-stage trees) will be clear from the context. The probability measure is denoted $\Q_r$ if the degree distribution of all vertices is $r$ and $\Q_{r,s}$ if the root has degree distribution $r$ and other vertices have degree distribution $s$. If on top of the tree, a graphical construction for the contact process with rate $\lambda$ is also defined, we write $\Q^\lambda_r$ and $\Q^\lambda_{r,s}$.

$G_n$ denotes the random graph on $n$ vertices with fixed degree distribution $p$, as described above, and $\P_{p,n}$ a probability measure for a space in which it is defined. $\P_{p,n}^\lambda$ is used when a graphical construction with rate $\lambda$ is defined on the random graph.

The distribution $p$ has exponent $a$, as in (\ref{eq:ele2}), and its mean is denoted by $\mu$. If the sized-biased distribution $q$ has finite mean, this mean is denoted by $\nu$. Since $q$ may have infinite expectation, we may sometimes have to consider its truncation, that is, for $m > 0$, the law
\begin{equation}\overline q_m(k) =  \left\{\begin{array}{ll}q(m,\infty) &\text{if } k = 1;\\q(k),&\text{if }1 < k \leq m;\\ 0,&\text{if }k > m.\end{array}\right.\label{eq:defhatq}\end{equation}
(since $q$ is used as a \textit{degree} distribution for vertices of a tree, we set the minimum value of its truncation to 1).

On a graph $G$, $d(x, y)$ denotes graph distance and $B(x,R) = \{y: d(x, y) \leq R\}$. For a set $A$, we denote by $|A|$ the number of elements of $A$.


\section{A survival estimate on star graphs}
\label{s:survestimate}

We start looking at the contact process on star graphs, that is, graphs in which all vertices except a privileged one (called the hub) have degree 1. A first result to the effect that the contact process survives for a long time on a large star was Lemma 5.3 in \cite{BBCS}, which showed that, for a star $S$, if $\lambda$ is small and $\lambda^2|S|$ is larger than a universal constant, then the infection survives for a time that is exponential in $\lambda^2|S|$. The following result adds some more detail to that picture. 

\begin{lemma} \label{basic}
There exists $c_1 > 0$ such that, if $\lambda < 1$ and $S$ is a star with hub $o$, \medskip\\
$(i.)\; \displaystyle{P_S^\lambda\left(|\xi^o_1| > \frac{1}{4e}\cdot\lambda\deg(o)\right) \geq \frac{1}{e}(1-e^{-c_1 \lambda \deg(o)})};$\medskip\\
$(ii.)$ if $\lambda^2\deg(o) >  64e^2$ and $|\xi_0| >\frac{1}{16e} \cdot  \lambda\deg(o)$, then $P_S^\lambda\left(\xi_{e^{c_1 \lambda^2\deg(o)}} \neq \varnothing\right) \geq 1-e^{-c_1 \lambda^2\deg(o)};$\\[0.3cm]
$(iii.)\;$ as $|S| \to \infty,\;P_S^\lambda\left(\exists t:\; |\xi^o_t| >\frac{1}{4e}\cdot \lambda\deg(o) \right) \to 1$.
\end{lemma}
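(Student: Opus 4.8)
The plan is to prove Lemma \ref{basic} in the order $(i) \Rightarrow (ii) \Rightarrow (iii)$, since each part builds on the previous one, and to think of the proof as a quantitative rendering of the ``bootstrap effect'' described in the Introduction.

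\medskip

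\noindent\textbf{Part $(i)$.} First I would consider the contact process started from only the hub $o$ infected, and study the interval of time $[0,1]$. I would decompose the event of interest by conditioning on the time $R = \inf D_o$ of the first recovery at the hub. On the event $\{R > 1\}$, which has probability $e^{-1} = 1/e$, the hub stays infected throughout $[0,1]$, so each of the $\deg(o)$ leaves receives transmissions from $o$ according to an independent rate-$\lambda$ Poisson process on $[0,1]$, and each leaf that gets at least one transmission and no subsequent recovery before time $1$ will be infected at time $1$. Actually it is cleaner to just bound below: each leaf is infected at time $1$ with probability at least some $p_\lambda \geq c\lambda$ (transmission arrives, then no recovery, uniformly in the arrival time), these events are conditionally independent given $\{R>1\}$, so $|\xi^o_1|$ conditionally stochastically dominates a $\mathrm{Binomial}(\deg(o), p_\lambda)$ variable. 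A Chernoff bound then gives $P(|\xi^o_1| \le \tfrac12 p_\lambda \deg(o) \mid R>1) \le e^{-c_1 \lambda \deg(o)}$, and tuning constants so that $\tfrac12 p_\lambda \ge \tfrac{1}{4e}\lambda$ yields $(i)$. The only mildly delicate point is getting the explicit constant $\tfrac{1}{4e}$ to come out, which is a matter of being slightly careful with the ``no recovery after the arrival'' factor; nothing deep.

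\medskip

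\noindent\textbf{Part $(ii)$.} This is the heart of the lemma and where I expect the main work. The idea is a renewal/iteration argument over unit time steps. Suppose at some integer time $k$ we have $|\xi_k| \ge \tfrac{1}{16e}\lambda\deg(o)$ infected leaves (plus possibly the hub). I want to show that with probability $\ge 1 - e^{-c\lambda^2\deg(o)}$ we again have $|\xi_{k+1}| \ge \tfrac{1}{16e}\lambda\deg(o)$. The mechanism: during $[k,k+1]$, with probability bounded below the hub becomes infected (some infected leaf transmits to it before recovering — here one uses that there are $\asymp \lambda\deg(o)$ infected leaves, so this happens with probability close to $1$ when $\lambda^2\deg(o)$ is large, in fact with probability $\ge 1 - e^{-c\lambda^2\deg(o)}$ that it is infected for, say, at least a quarter of the interval); and once the hub has been infected for a macroscopic sub-interval, by the argument of part $(i)$ it re-infects $\asymp \lambda\deg(o)$ leaves, so that $|\xi_{k+1}|$ dominates a Binomial that is $\ge \tfrac{1}{16e}\lambda\deg(o)$ except with probability $e^{-c\lambda^2\deg(o)}$. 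Chaining this over $\lceil e^{c_1\lambda^2\deg(o)}\rceil$ steps via a union bound, and choosing $c_1$ small enough that the number of steps times $e^{-c\lambda^2\deg(o)}$ is still $\le e^{-c_1\lambda^2\deg(o)}$, gives $(ii)$. The subtlety — and the main obstacle — is handling the dependence between consecutive steps correctly: the configuration at time $k$ is not fresh, so I must phrase the one-step estimate as a statement conditional on $\mathcal F_k$ that holds on the event $\{|\xi_k|\ge \tfrac{1}{16e}\lambda\deg(o)\}$, uniformly over such configurations, and then apply the strong Markov property; also the threshold $\lambda^2\deg(o) > 64e^2$ is exactly what is needed to make the Binomial concentration beat the constants $\tfrac{1}{4e}$ vs $\tfrac{1}{16e}$ with room to spare.

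\medskip

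\noindent\textbf{Part $(iii)$.} This is a soft consequence. Starting from the hub infected, by $(i)$ at time $1$ we have $|\xi^o_1| > \tfrac{1}{4e}\lambda\deg(o)$ with probability $\ge \tfrac{1}{e}(1-e^{-c_1\lambda\deg(o)})$, which tends to $\tfrac1e > 0$ as $|S| = \deg(o)+1 \to \infty$ (with $\lambda$ fixed). That already shows the event in $(iii)$ has probability bounded away from $0$; to push it to $1$, I would restart: on the complementary event the hub recovered before time $1$, but one can look at later unit intervals — as long as the hub is ever re-infected (which, while the process survives, it is, repeatedly), each visit gives an independent-ish chance $\ge \tfrac1e(1 - e^{-c_1\lambda\deg(o)})$ of the leaf-count exceeding the threshold during the following unit of time. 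Alternatively, and more cleanly: condition on the graphical construction restricted to leaf-edges; the event $\{\exists t: |\xi^o_t| > \tfrac{1}{4e}\lambda\deg(o)\}$ contains the event that within the first, say, $\log\deg(o)$ recovery-intervals of the hub there is one of length $> 1/2$ during which enough leaves get infected — and the probability that none of $\log\deg(o)$ i.i.d.\ exponentials exceeds $1/2$, times the failure probability of the Binomial step, goes to $0$. I would write whichever of these is shortest; the content is just ``a bounded-below-probability event gets infinitely many independent tries.''
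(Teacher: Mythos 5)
Your part (i) is essentially the paper's own argument (condition on no recovery at the hub during $[0,1]$, dominate the number of infected leaves by a Binomial, apply a Chernoff-type bound), and part (iii) is omitted in the paper as ``similar and simpler'' to (ii), so your sketch there is acceptable. The problem is in part (ii), which you correctly identify as the heart of the lemma.

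The gap is in your one-step estimate. You assert that, given at least $\frac{1}{16e}\lambda\deg(o)$ infected leaves at time $k$, the hub is infected for at least a quarter of $[k,k+1]$ with probability $\geq 1-e^{-c\lambda^2\deg(o)}$, and the justification you offer is that some infected leaf transmits to the hub before recovering. That only shows the hub becomes infected at least once; its ensuing infection period is exponential with mean one, hence shorter than $1/4$ with probability bounded away from zero, so no bound based on a single transmission can give a failure probability as small as $e^{-c\lambda^2\deg(o)}$ for the occupation-time event. What is needed -- and what is the technical core of the paper's proof -- is a quantitative version of the bootstrap: first show that a positive fraction of the infected leaves suffers no recovery during the entire interval (Binomial concentration); these persistent leaves push the hub at total rate of order $\lambda^2\deg(o)$, so the hub's indicator dominates a two-state Markov chain with on-rate about $\lambda^2\deg(o)/32e^2$ and off-rate $1$, and a large-deviations estimate for the occupation time of that chain (this is exactly where the hypothesis $\lambda^2\deg(o)>64e^2$ is used) yields the $e^{-c\lambda^2\deg(o)}$ bound for the event that the hub is on at least half the time. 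The paper implements this with the sets $\Gamma_j,\Psi_j$, an auxiliary on--off process $Z^j$, and the chain of events $A_{1,j},\dots,A_{4,j}$. A second, related point you gloss over: when you then let the hub ``re-infect $\asymp\lambda\deg(o)$ leaves by the argument of part (i)'', you are using hub-to-leaf arrows, and you must check these are independent of the information used to control the hub's occupation time; the paper arranges this by constructing $Z^j$ only from leaf recoveries and leaf-to-hub arrows, so conditioning on the occupation-time event does not bias the hub-to-leaf arrows. Your outer structure (a one-step estimate uniform over configurations above the threshold, the Markov property at integer times, and a union bound over $e^{c_1\lambda^2\deg(o)}$ steps with $c_1$ small) coincides with the paper's; it is the one-step estimate itself that remains unproved in your proposal.
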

\begin{proof}
Here and in the rest of the paper, we use the following fact, which is a consequence of the Markov inequality: for any $n \in  \N$ and $r \in [0,1]$, if $X \sim \mathsf{Bin}(n, r)$ we have
\begin{eqnarray}\label{mark}\forall \alpha > 0 \;\exists \theta > 0:\; \P(|X-\E X| > \alpha n r) \leq e^{-\theta n r}.
\end{eqnarray}

For the event in $(i.)$ to occur, it is sufficient that there is no recovery at $o$ in $[0,1]$ and, for at least $\frac{\lambda}{4e}\deg(o)$ leaves, there is no recovery in $[0,1]$ and a transmission is received from $o$. Also using the inequality $1-e^{-\lambda} \geq \lambda/2$ for $\lambda < 1$, the probability in $(i.)$ is more than
$$\begin{aligned}&e^{-1} \cdot \P\left(\mathsf{Bin}\left(\deg(o),\; e^{-1}(1-e^{-\lambda})\right) >\frac{\lambda }{4e}\deg(o)\right) \\&\geq e^{-1} \cdot \P\left(\mathsf{Bin}\left(\deg(o),\; \frac{\lambda}{2e}\right) >\frac{\lambda }{4e}\deg(o)\right)\geq e^{-1}(1-e^{-c \lambda \deg(o)})\end{aligned}$$  
for some $c > 0$, by (\ref{mark}). $(i.)$ is now proved.

For $j \geq 0$, define
$$\begin{aligned}&\Gamma_j = \{y \in S\backslash \{o\}: D_y \cap [j, j+1] = \varnothing\},\\
&\Psi_j = \{y \in \Gamma_j: \xi^o_j(y) = 1\}.\end{aligned}$$
$\Psi_j$ is thus the set of leaves of the star that are infected at time $j$ and do not heal until time $j + 1$. For each $j \geq 0$, we will now define an auxiliary process $(Z^j_t)_{j \leq t \leq j+1}$. We put $Z^0_t \equiv 1$ and, for $j \geq 1$, put
\begin{itemize}
\item $Z^j_j = 0;$
\item for each $t \in [j, j+1]$ such that for some $y \in \Psi_j$ we have $t \in D_{y,o}$, put $Z^j_t = 1$;
\item for each $t \in [j, j+1] \cap D_o$, put $Z^j_t = 0$;
\item complete the definition of $Z^j_t$ by making it constant by parts and right-continuous.
\end{itemize}
It is then clear that 
\begin{equation} \label{hjEq1}Z^j_t \leq \xi^o_t(o) \; \forall j, t.\end{equation}

Consider the events, for $j \geq 0$:
$$\begin{aligned}
&A_{1,j} = \{|\Gamma_j| > \deg(o)/2e\};\\
&A_{2,j} = \{|\Psi_j| > \lambda \deg(o)/32e^2\};\\
&A_{3,j} = \left\{\int_j^{j+1} I_{\{Z_t = 1\}} \; dt > 1/2\right\};\\
&A_{4,j} = \left\{\left| \left\{\begin{array}{c}y \in \Gamma_j: \text{for some $t \in [j,j+1],$}\\Z^j_t = 1 \text{ and } t \in D_{o,y} \end{array}\right\}\right| > \frac{\lambda \deg(o)}{16e} \right\}.
\end{aligned}$$
Notice that, by (\ref{hjEq1}) and the definition of $A_{3,j}$,
\begin{equation}\label{hjEq2}\left\{\frac{1}{N} \int_0^N I_{\{\xi^o_t(o) = 1\}}\; dt > \frac{1}{2} \right\} \supset {\mathop\cap_{j=0}^{N-1}}\; A_{3,j} \qquad \forall N \in \N. \end{equation}

We have
\begin{equation}P_S^\lambda\left((A_{1,j})^c\right) \leq \P\left(\;\mathsf{Bin}(\deg(o), 1/e) \leq \deg(o)/2e\;\right) \leq e^{-\theta \deg(o)/e}. \label{hjB2}\end{equation}

We now want to bound $P_S^\lambda\left((A_{4,j})^c\;|\; A_{1,j} \cap A_{3,j}\right)$, for $j \geq 0$. By the definition of $(Z^j)$, the event $A_{1,j} \cap A_{3,j}$ depends only on $\xi^o_j$, $(D_y \cap [j, j+1])_{y \in S \backslash \{o\}}$ and $(D_{y,o} \cap [j,j+1])_{y \in S \backslash \{o\}}$. Therefore, conditioning on $A_{1,j} \cap A_{3,j}$ does not affect the law of $(D_{o,y} \cap [j, j+1])_{y \in S \backslash \{x\}}$, the set of arrows from the hub to the leaves at times in $[j, j+1]$. We thus have
\begin{flalign}\nonumber &P_S^\lambda\left((A_{4,j})^c\;|\; A_{1,j} \cap A_{3,j}\right)\leq \P\left(\mathsf{Bin}(\deg(o)/2e,\;1-e^{-\lambda/2}) \leq \lambda \deg(o)/16e\right)& \\&\qquad\qquad\qquad \leq \P\left(\;\mathsf{Bin}(\deg(o)/2e,\; \lambda/4)\leq \lambda \deg(o)/16e \right)) \leq e^{-\theta \lambda \deg(o)/8e}  \label{hjB3} \quad \forall j \geq 0.& \end{flalign}

Let us now bound $P^\lambda_S((A_{3,j})^c\;|\; A_{2,j})$. Define the continuous-time Markov chains $(Y_t)_{t \geq 0},\;(Y'_t)_{t \geq 0}$ with state space $\{0,1\}$ and infinitesimal parameters
$$\begin{array}{ll}
q_{01} = \frac{\lambda^2\deg(o)}{32e^2},&q_{10} = 1; \medskip\\
q'_{01} = 1,& q'_{10} = \frac{32e^2}{\lambda^2\deg(o)}.
\end{array}$$
Now, if $32e^2/(\lambda^2\deg(o)) < 1/2$ we have
$$\begin{aligned}
P^\lambda_S\left((A_{3,j})^c\;|\; A_{2,j}\right) \leq \P\left(\int_0^1 I_{\{Y_t = 0\}}\; dt \geq \frac{1}{2}\right) &= \P\left(\frac{32e^2}{\lambda^2\deg(o)}\int_0^{\frac{\lambda^2\deg(o)}{32e^2}}I_{\{Y'_t = 0\}}\;dt \geq \frac{1}{2}\right).
\end{aligned}$$
Denoting by $\pi$ the invariant measure for $Y'$, we have $\pi_0 = \frac{\frac{32e^2}{\lambda^2\deg(o)}}{1+\frac{32e^2}{\lambda^2\deg(o)}} < \frac{1}{3}$. Then, by the large deviations principle for Markov chains (see for example \cite{dembzeit}), we get
\begin{equation} P^\lambda_S\left((A_{3,j})^c\;|\; A_{2,j}\right) \leq e^{-c\lambda^2\deg(o)}\label{hjB4}\end{equation}
for some $c > 0$.

Finally, for any $j \geq 1$ we have
\begin{equation} \label{hjB5}P^\lambda_S\left((A_{2,j})^c\;|\;A_{4,j-1}\right) \leq \P\left(\mathsf{Bin}(\lambda \deg(o)/16e,\; 1/e) \leq \lambda \deg(o)/32e^2\right) \leq e^{-\theta \lambda^2\deg(o)/16\lambda e^2}\end{equation}
and similarly, $P^\lambda_S\left((A_{2,0})^c\right) \leq e^{-\theta \lambda^2\deg(o)/16\lambda e^2}$.
Putting together (\ref{hjEq2}), (\ref{hjB2}), (\ref{hjB3}), (\ref{hjB4}) and (\ref{hjB5}), we get the desired result.

Statement $(iii.)$ can be proved by similar (and simpler) arguments than $(ii.)$, so for brevity we omit a full proof.
\end{proof}

As an application of the previous result, for two vertices $x$ and $y$ of a connected graph, we give a condition on $\deg(x)$ and $d(x,y)$ that guarantees that, with high probability, the infection is maintained long enough around $x$ to produce a path that reaches $y$.

\begin{lemma}
\label{lem:infNei}
There exists $\lambda_0 > 0$ such that, if $0 < \lambda < \lambda_0$, the following holds. If $G$ is a connected graph and $x, y$ are distinct vertices of $G$ with $$\deg(x) > \frac{3}{c_1}\frac{1}{\lambda^2}\log\left(\frac{1}{\lambda}\right) \cdot d(x, y) \quad \text{ and }\quad \frac{|\xi_0\;\cap\; B(x,1)|}{\lambda \cdot|B(x,1)|} > \frac{1}{16e},$$ then
$$P_G^\lambda\left(\exists t: \frac{|\xi_t\;\cap\; B(y,1)|}{\lambda \cdot|B(y,1)|} > \frac{1}{16e} \right)>1-2e^{-c_1\lambda^2\deg(x)}.$$
\end{lemma}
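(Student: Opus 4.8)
The plan is to use Lemma \ref{basic} to show that the infection is sustained around the high-degree vertex $x$ for a time of order $e^{c_1\lambda^2\deg(x)}$, and then argue that this time window is long enough for a single infection path to travel the $d(x,y)$ steps from $x$ (or rather from a neighbour of $x$) to $y$. First I would set $D = d(x,y)$ and fix a geodesic $x = z_0, z_1, \dots, z_D = y$ in $G$. The hypothesis $\deg(x) > \tfrac{3}{c_1}\lambda^{-2}\log(1/\lambda)\, D$ ensures in particular that $\lambda^2\deg(x) > 64e^2$ once $\lambda$ is small, so part $(ii.)$ of Lemma \ref{basic} applies to the star centered at $x$: with the hypothesis on $|\xi_0 \cap B(x,1)|$ giving the required initial density of infected leaves, we get that $\xi_t \cap B(x,1) \ne \varnothing$ throughout a time interval of length $T := e^{c_1\lambda^2\deg(x)}$, except on an event of probability at most $e^{-c_1\lambda^2\deg(x)}$. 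A slightly stronger statement is what I actually want: that during $[0,T]$ there are order $T$ disjoint unit time intervals in each of which $x$ itself is infected for at least half the time (this is essentially built into the proof of $(ii.)$ via the events $A_{3,j}$), so that $x$ has order $T$ "independent tries" to launch an infection path down the geodesic.

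The core of the argument is then the following: in each unit of time during which $x$ is infected, with probability at least $(\lambda/2)^D \cdot (\text{const})^D \geq (c\lambda)^D$ for some universal $c$, the infection travels along $z_0 \to z_1 \to \cdots \to z_D$ within that unit interval — namely there is a transmission arrow $z_{i-1}\to z_i$ at increasing times and no recovery at $z_i$ between when it gets infected and the end of the chain; the probability of a specific such cascade in a short interval is bounded below by a product of $D$ factors each of order $\lambda$. Once the path reaches $y$, it has positive probability (bounded below by a constant, since $|B(y,1)| \leq$ some fixed-in-$\lambda$ quantity is NOT guaranteed — here one instead uses part $(i.)$-type reasoning) of seeding at least $\tfrac{1}{16e}\lambda|B(y,1)|$ infected leaves of the star around $y$. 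Actually cleaner: once $y$ is infected, apply Lemma \ref{basic}$(i.)$ to the star around $y$ to get, with probability $\geq e^{-1}(1 - e^{-c_1\lambda\deg(y)})$, that $|\xi_{\cdot}\cap B(y,1)| > \tfrac{1}{4e}\lambda\deg(y)$ at the relevant time, which is more than the $\tfrac{1}{16e}$ threshold required. Since we only need ONE of the order $T$ attempts to succeed, and each succeeds with probability at least $(c\lambda)^D$ times a constant, the probability that all attempts fail is at most
$$\left(1 - c'(c\lambda)^D\right)^{T} \leq \exp\left(-c'(c\lambda)^D \cdot e^{c_1\lambda^2\deg(x)}\right).$$
Now the hypothesis $\deg(x) > \tfrac{3}{c_1}\lambda^{-2}\log(1/\lambda)\,D$ gives $e^{c_1\lambda^2\deg(x)} \geq \lambda^{-3D}$, which dominates $(c\lambda)^{-D}$ by a factor $\geq \lambda^{-2D} \geq \lambda^{-2}$, so the exponent above is $\leq -\lambda^{-2}$, hence certainly $\leq -c_1\lambda^2\deg(x)$ for $\lambda$ small (adjusting constants as needed). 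Combining with the $e^{-c_1\lambda^2\deg(x)}$ failure probability from the sustaining phase and taking a union bound yields the claimed $1 - 2e^{-c_1\lambda^2\deg(x)}$.

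The step I expect to be the main obstacle is making rigorous the claim that there are order $T$ genuinely independent (or suitably conditionally independent) attempts to push the infection down the geodesic. The events "the star around $x$ keeps the infection alive" and "a cascade fires in the $j$-th unit interval" both depend on the Poisson processes along the path $z_1,\dots,z_D$, so one has to be careful not to double-use randomness. The clean way is to reserve the transmission/recovery processes on the edges and vertices of the geodesic (other than $x$ itself and its own star leaves) for the cascade attempts only, and run the Lemma \ref{basic}$(ii.)$ machinery using only the star around $x$ — these use disjoint parts of the graphical construction as long as $x$'s star and the geodesic's interior are appropriately separated, which one arranges by noting the geodesic uses each intermediate vertex once and $B(x,1)$ interacts with it only through the single vertex $z_1$. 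One must then condition on the (many) times at which $x$ is infected for at least half of a unit interval, which are determined by the $x$-star processes, and on that conditioning the cascade processes are still fresh. Handling the slight dependence at $z_1 \in B(x,1)$ — e.g. by discarding the half of each unit interval used by the $x$-star argument and only attempting cascades in the complementary time — is the kind of bookkeeping that makes this routine but not instant.
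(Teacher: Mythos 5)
Your overall architecture matches the paper's (sustain the infection near $x$ via Lemma \ref{basic}$(ii.)$, make many attempts to push a cascade along a geodesic to $y$, finish with Lemma \ref{basic}$(i.)$ at $y$, multiply failure probabilities by conditioning, and use the degree-versus-distance hypothesis in the final comparison), but your core quantitative step has a genuine gap. You squeeze the whole $D$-step cascade ($D=d(x,y)$) into a \emph{unit} time interval and claim a per-attempt success probability of at least $(c\lambda)^D$ with a universal $c$. That is false for large $D$: for the cascade to fire inside a unit interval each of the $D$ edges must carry an arrow and the arrows must occur in increasing order along the path, and a first-moment computation over the simplex shows this has probability at most $\lambda^D/D!$ (your explicit construction gives roughly $(\lambda/D)^D$). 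This $D^{-D}$ loss is fatal precisely in the regime the lemma must cover: the hypothesis only controls the ratio $\deg(x)/d(x,y)$, so $D$ is unbounded (and the lemma is in fact applied in Section \ref{s:lower} with $d(x,y)\to\infty$ at fixed $\lambda$). At the boundary $c_1\lambda^2\deg(x)\approx 3D\log(1/\lambda)$ the available number of unit-interval attempts is about $\lambda^{-3D}$, so the expected number of successful cascades is at most $\lambda^{-2D}/D!$, which tends to $0$ once $D\gg \lambda^{-2}$; your scheme then cannot produce the stated bound. The paper's fix is to give each attempt a window of length $r=2d(x,y)$ (one unit of time per hop), so the per-attempt probability is $\left(e^{-1}(1-e^{-\lambda})\right)^{r}\cdot e^{-1}(1-e^{-c_1\lambda\deg(y)})\ge (\lambda/3)^{r}\cdot\frac{c_1\lambda}{2e}$ with genuinely universal constants, and the number of attempts is $\lfloor e^{c_1\lambda^2\deg(x)}/r\rfloor$; the factor $3$ in the hypothesis is calibrated exactly to absorb the resulting constants.

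Two further points. First, your closing step ``the exponent is $\le -\lambda^{-2}$, hence certainly $\le -c_1\lambda^2\deg(x)$'' is invalid as written, because $\deg(x)$ has no upper bound in terms of $\lambda$; the correct comparison must retain the full factor $e^{c_1\lambda^2\deg(x)}$ rather than only its lower bound $\lambda^{-3D}$ (the paper's computation does this). Second, the independence bookkeeping you anticipate as the main obstacle is handled much more simply in the paper than by reserving parts of the graphical construction: one conditions, at each window start time $ir$, on the entire past $(\xi_t)_{t\le ir}$ together with the event that some vertex of $B(x,1)$ is infected at time $ir$ (which follows from Lemma \ref{basic}$(ii.)$ and the fact that extinction is absorbing); the cascade event for that window uses only Poisson marks after time $ir$, so its conditional probability is bounded below uniformly, and the failure probabilities multiply by iterating backwards over windows. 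In particular there is no need to require that $x$ itself be infected for half of each interval (which would rely on the internal structure of the proof of Lemma \ref{basic}$(ii.)$ rather than its statement), nor to discard halves of intervals to decouple the star from the geodesic.
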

\begin{proof}
Let $r = 2d(x,y)$ and $L = \lfloor \frac{\exp(c_{1} \lambda^2 \deg(x))}{r} \rfloor$. Define the event $$A^2_1 = \{\forall s \leq Lr,\; \exists z \in B(x,1): \xi^x_s(z) = 1 \}.$$
By Lemma \ref{basic} we have $P_{G}^\lambda(A^2_1) \geq 1 - e^{-c_1\lambda^2 \deg(x)}.$

Further define the events 
$$A^2_{2,i} = \{\exists z \in B(x,1): \xi^x_{ir}(z) = 1\}, \quad i= 0, \ldots, L-1$$
so that $A^2_1 \subset \cap_{i=1}^{L-1} A^2_{2,i}$.
On $A^2_{2,i}$, we can choose $Z_i \in B(x,1)$ such that $\xi^x_{ir}(Z_{i}) = 1$ and a sequence $\gamma_{i,0} = Z_i,\; \gamma_{i,1}, \ldots, \gamma_{i,k_i} = y$ such that $d(\gamma_{i,j}, \gamma_{i,j+1}) = 1 \; \forall j$ and $k_i \leq d(x,y)+1 \leq r$. Define
$$A^2_{3,i} = A^2_{2,i} \cap \left\{ \exists s \in [ir,\; (i+1)r - 1): (Z_i, ir) \;\lra\; (y,s)\text{ and } \frac{|\xi_{s+1} \;\cap \;B(y,1)|}{\lambda \cdot|B(y,1)|} > \frac{1}{16e} \right\},$$
We claim that 
\begin{equation}\label{eqnPathToy}P_{G}^\lambda\left( A^2_{3,i}\;|\;A^2_{2,i},\;(\xi_t)_{0\leq t \leq ir}\right) \geq \left(e^{-1}(1-e^{-\lambda})\right)^r\cdot e^{-1}(1-e^{-c_1\lambda\deg(y)}).\end{equation}
To see this, note that an infection path from $(Z_i, ir)$ to $\{y\} \times [ir, \; (i+1)r - 1)$ can be obtained by imposing that, for $0 \leq j < k_i$, there is no recovery in $\{\gamma_{i,j}\} \times [ir + j,\; ir + j + 1)$ and at least one transmission from $\gamma_{i,j}$ to $\gamma_{i, j+1}$ at some time in $[ir + j,\; ir+j + 1)$. This explains the term $(e^{-1}(1-e^{-\lambda}))^r$ in the right-hand side of (\ref{eqnPathToy}). The other term comes from Lemma \ref{basic}(i.).

The right-hand side of (\ref{eqnPathToy}) is larger than $\left(\frac{\lambda}{3}\right)^{r} \cdot \frac{c_1\lambda}{2e}$ when $\lambda$ is small.
We then have
$$\begin{aligned}P_{G}^\lambda\left(A^2_1 \cap (\cup_{i=0}^{L-1}\; A^2_{3,i})^c\right) &\leq P_{G}^\lambda\left((\cap_{i=0}^{L-1} \; A^2_{2,i}) \cap (\cup_{i=0}^{L-1}\; A^2_{3,i})^c\right)\\
&\leq P_{G}^\lambda\left((\cap_{i=0}^{L-1} \; A^2_{2,i}) \cap (\cup_{i=0}^{L-2}\; A^2_{3,i})^c\right) \cdot \left(1-(c_1\lambda/2e)\left(\lambda/3\right)^{r}\right)\\
&\leq P_{G}^\lambda\left((\cap_{i=0}^{L-2} \; A^2_{2,i}) \cap (\cup_{i=0}^{L-2}\; A^2_{3,i})^c\right) \cdot \left(1-(c_1\lambda/2e)\left(\lambda/3\right)^{r}\right)\end{aligned}$$
and iterating, this is less than
$$\begin{aligned} \left(1-\frac{c_1\lambda}{2e}\left(\frac{\lambda}{3}\right)^{r} \right)^{L} \leq \exp\left\{-\frac{c_1\lambda}{2e}\left( \frac{\lambda}{3}\right)^{2d(x,y)} \cdot \frac{1}{2d(x,y)} \cdot \exp\left\{c_1\lambda^2\deg(x) \right\} \right\},
\end{aligned}$$
which is smaller than $e^{-c_1\lambda^2\deg(x)}$ if $\lambda$ is small enough, since $\deg(x) > \frac{3}{c_1}\frac{1}{\lambda^2}\log\frac{1}{\lambda}$. This completes the proof.
\end{proof}


\section{Proof of Proposition \ref{prop:main}: lower bounds}
\label{s:lower}
Given a random graph $G$ (which will be either a Galton-Watson tree or the random graph $G_n$), a vertex $x$ of $G$ and $R,K > 0$, define the event
\begin{equation}
\mathcal{M}(x,R,K) = \{\exists y: d(x,y) \leq R,\; \deg(y) > K\}.\label{eq:defM}
\end{equation}
We will need the following simple result on Galton-Watson trees.
\begin{lemma}\label{lem:auxFinal}
If $2 < a \leq 3$, then ${\displaystyle\;\; \liminf_{K \to \infty}\; \Q_q\left(\;\mathcal{M}(o, 2, K\log K)\; \left|\;\deg(o) = K \right.\right) > 0}$.
\end{lemma}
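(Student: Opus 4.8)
We are conditioning on the root having degree exactly $K$, so the root has $K$ children, each of which is the root of an independent Galton--Watson tree with degree distribution $q$ (hence each child has $\mathrm{Geom}$-like offspring: a child with degree $d$ has $d-1$ further children). The event $\mathcal{M}(o,2,K\log K)$ asks that some vertex within graph distance $2$ of the root has degree exceeding $K\log K$. I want to show the conditional probability of this is bounded away from $0$ uniformly as $K\to\infty$.

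**Plan.** The natural strategy is to find the high-degree vertex among the \emph{grandchildren} of the root (vertices at distance $2$), since there are many of them. First I would condition further on the degrees $d_1,\dots,d_K$ of the $K$ children; each $d_i$ is an independent sample from $q$, so each child contributes $d_i - 1$ grandchildren, and the total number of grandchildren is $N = \sum_{i=1}^K (d_i-1)$. Since $q$ has a power-law tail with exponent $a-1 \in (1,2]$ (by \reff{c0a-1}), the $d_i$ have finite mean but the law of large numbers still gives $N \geq c K$ with probability bounded away from $0$ for a suitable constant $c>0$ and all large $K$ — in fact $\E[d_i] = \nu$ when $a>3$; but here $2<a\le 3$ and the mean may be infinite, which only helps: $N$ is then even larger, stochastically at least of order $K$. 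So with probability bounded below we have at least $cK$ grandchildren, and the degree of each grandchild is an independent draw from $q$ (the size-biased law).

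**Core estimate.** Conditional on there being $m \geq cK$ grandchildren, the probability that \emph{none} of them has degree exceeding $K\log K$ is $\bigl(1 - q(K\log K,\infty)\bigr)^m$. By \reff{c0a-2}, $q(K\log K,\infty) \geq c_0 (K\log K)^{-(a-2)}$. Since $a \leq 3$, we have $a - 2 \leq 1$, so $(K\log K)^{-(a-2)} \geq (K\log K)^{-1}$, and therefore
$$m \cdot q(K\log K,\infty) \;\geq\; cK \cdot c_0 (K\log K)^{-(a-2)} \;\geq\; c\,c_0\,(\log K)^{-(a-2)} \cdot K^{1-(a-2)} \cdot (\log K)^{0},$$
wait — more carefully: $K \cdot (K\log K)^{-(a-2)} = K^{1-(a-2)}(\log K)^{-(a-2)} = K^{3-a}(\log K)^{-(a-2)}$, which tends to $\infty$ when $a<3$ (since $3-a>0$) and equals $(\log K)^{-1}$ when $a=3$, which tends to $0$ — that is a problem at $a=3$. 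So for $a=3$ I should look instead at grandchildren \emph{or} go one more generation, or exploit that when $a = 3$ the number of grandchildren $N$ is of order $K\log K$ rather than $K$ (because $\sum_{k\le m} k q(k) \asymp \log m$ by \reff{c03-a}, so the children's degrees sum to order $K\log K$). Indeed $\E[\min(d_i,K^2)] \asymp \log K$, and a truncated second-moment / concentration argument gives $N \gtrsim K\log K$ with probability bounded below; then $N \cdot q(K\log K,\infty) \gtrsim K\log K \cdot (K\log K)^{-1} \asymp 1$, which is merely bounded, not large. To make it large one increases the constant in the threshold's prefactor is not allowed (it's $K\log K$), so instead I would note the Introduction's case $a=3$ allows using grandchildren *and* great-grandchildren, or simply observe that "bounded below" is all we need: $(1 - q(K\log K,\infty))^N \leq e^{-N q(K\log K,\infty)} \leq e^{-c'} < 1$, so the complement has probability at least $1 - e^{-c'} > 0$. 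Combining with the probability that $N$ is large enough, the conditional probability of $\mathcal{M}$ is bounded below.

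**Execution outline and main obstacle.** Concretely: (1) condition on $\deg(o)=K$; the root has $K$ children with i.i.d.\ degrees $\sim q$. (2) Show $\P(N \geq \beta g(K)) \geq \delta$ for constants $\beta,\delta>0$, where $g(K)=K$ if $a<3$ and $g(K)=K\log K$ if $a=3$; this uses \reff{c03-a} and a one-sided concentration bound for the (possibly heavy-tailed, so truncate first) sum $N=\sum(d_i-1)$. (3) Given $N = m$, the $m$ grandchild-degrees are i.i.d.\ $\sim q$ and independent of step (2)'s event, so $\P(\max \text{grandchild degree} \leq K\log K \mid N=m) = (1-q(K\log K,\infty))^m \leq \exp(-m\,q(K\log K,\infty))$. (4) Plug in \reff{c0a-2}: $m\,q(K\log K,\infty) \geq \beta g(K)\cdot c_0(K\log K)^{-(a-2)}$, which is bounded below by a positive constant uniformly in large $K$ in both cases. (5) Therefore $\Q_q(\mathcal{M}(o,2,K\log K)\mid \deg(o)=K) \geq \delta(1-e^{-c'}) > 0$, and take $\liminf$. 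The main obstacle is step (2) in the regime where $q$ has infinite mean ($2 < a < 3$) or near-logarithmic mean ($a=3$): one cannot invoke a plain law of large numbers, and must instead truncate the $d_i$ at a level (say $K^2$) below which Chebyshev applies, checking that the truncated mean still has the right order via \reff{c03-a} and that the truncation level is chosen so the discarded mass is negligible; this is routine but requires care to get a clean one-sided bound $\P(N \geq \beta g(K)) \geq \delta$ valid for all large $K$.
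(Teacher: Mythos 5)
Your plan is essentially the paper's own proof: condition on $\deg(o)=K$, lower-bound the number $N$ of grandchildren by a truncated second-moment argument, and then use that, given the children's degrees, the grandchildren's degrees are i.i.d.\ with law $q$, so the probability that none exceeds $K\log K$ is at most $\exp\left(-N\,q(K\log K,\infty)\right)$, which by \reff{c0a-2} is bounded away from $1$ once $N\,(K\log K)^{-(a-2)}$ is bounded below. Two remarks on the step you yourself flag as the delicate one. First, for $2<a<3$ no concentration is needed at all: since $q(\{0,1,2\})=0$, every child has at least two children, so $N\ge 2K$ deterministically, and $2K\cdot q(K\log K,\infty)\ge 2c_0K^{3-a}(\log K)^{-(a-2)}\to\infty$. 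Second, for $a=3$ your parenthetical truncation level $K^2$ does not work: the truncated second moment per child is $\sum_{k\le K^2}k^2q(k)\asymp K^2$, so the variance of the sum of the $K$ truncated variables is of order $K^3$, which swamps the squared target $(K\log K)^2$, and neither Chebyshev nor Paley--Zygmund then yields even a constant lower bound. Truncating at level $K$ instead --- this is exactly the paper's $\overline q_K$, which it exploits via one-sided stochastic domination rather than a ``negligible discarded mass'' bookkeeping --- gives per-child mean at least $c_0\log K$ and second moment $O(K)$ when $a=3$, so the number of grandchildren exceeds $\tfrac{c_0}{2}K\log K$ with probability tending to $1$, and the rest of your outline goes through verbatim. (The paper in fact uses the single grandchild threshold $\tfrac{c_0}{2}K\log K$ for all $2<a\le3$, which is harmless since the per-child mean is at least of order $\log K$ in every case; your case split between $g(K)=K$ and $g(K)=K\log K$ is an equivalent variant.)
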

\begin{proof}
Assume $\deg(o) = K$ and define
$$A=\left\{|\{x: d(o,x) = 2\}| > \frac{c_0}{2}K\log K\right\}.$$
Let $z_1, \ldots, z_K$ be the neighbours of the root and $Z_i = \deg(z_i)-1$ for $1 \leq i \leq K$, so that $\sum Z_i = |\{x: d(o,x) = 2\}|$. Note that the law of the $Z_i$ is given by $k \mapsto q(k+1)$, thus stochastically dominates the distribution $k \mapsto \hat q(k):=\overline q_{K}(k+1)$, where $\overline q_{K}$ is the truncation of $q$, as defined in (\ref{eq:defhatq}). Let $Y_1, Y_2, \ldots$ be i.i.d. with distribution $\hat q(k)$. We then have, by (\ref{c03-a}),
$$\E(Y_1) > c_0 \log(K), \qquad \text{Var}(Y_1) \leq \sum_{k\leq K} k^2q(k+1) \leq \bar C_0 K$$
where $\bar C_0 > 0$ is a constant that depends only on $p$. Then,
$$\begin{aligned}
&\Q_{q}(A\;|\;\deg(o) = K) = \Q_q\left(\left.\sum_{k \leq K} Z_k > \frac{c_0}{2} \;K\log K\;\right|\;\deg(o) = K\right) \geq \P\left(\sum_{k \leq K} Y_k > \frac{c_0}{2} \;K\log K\right) \\&\qquad\qquad\qquad\qquad> 1 - \P\left(\left|\sum_{k \leq K} Y_k - K\cdot \E(Y_1)\right| > \frac{c_0}{2} K\log K \right) > 1 - \frac{\bar C_0 \cdot K^2}{\left(\frac{c_0}{2} K\log K\right)^2} \stackrel{K \to \infty}{\xrightarrow{\hspace*{0.8cm}}} 1.
\end{aligned}$$
Now, if $A$ occurs, then there are at least $\frac{c_0}{2}K\log K$ vertices at distance 2 from $o$. Each of these vertices has degree larger than $K\log K$ with probability $q(K\log K,\;\infty) \geq c_0(K \log K)^{-(a-2)} \geq c_0(K \log K)^{-1}$, since $a \leq 3$. We thus get
$$\begin{aligned}&\Q_{q}(\mathcal{M}(o, 2, K\log K)\;|\;\{\deg(o) = K\}\;\cap\;A) \geq 1 - \left(1-c_0(K\log K)^{-1}\right)^{\frac{c_0}{2}K\log K} \\&> 1 - \exp \left\{-c_0\left(K\log K\right)^{-1} \cdot \frac{c_0}{2}K\log K \right\} = 1 - \exp\left\{-\frac{(c_0)^2}{2}\right\}.\end{aligned}$$
This completes the proof.
\end{proof}

Again assume that $G$ is a random graph; also assume we have a graphical construction for the contact process with parameter $\lambda$ on $G$. Let 
$$\chi_t = \{y: (x,0) \lra (y,t) \text{ inside } B(x,R)\}.$$
Then, define
\begin{equation}
\mathcal{N}(x,R,K) = \left\{\exists y, t: d(x,y) \leq R,\;\deg(y) > K,\;\frac{|\chi_t \;\cap\;B(y,1)|}{|B(y,1)|} > \frac{\min(\lambda,\lambda_0)}{16e} \right\},\label{eq:defN}
\end{equation}
where $\lambda_0$ is as in Lemma \ref{lem:infNei}. In words, in the contact process started from $x$ infected, a proportion larger than $\frac{\min(\lambda, \lambda_0)}{16e}$ of the neighbours of $y$ become infected at some time $t$, and this occurs through infection paths contained in the ball $B(x, R)$.

In this subsection, we will assume that $\lambda < \lambda_0$, as in Lemma \ref{lem:infNei}, and often will state conditions that require $\lambda$ to be sufficiently small.

\subsection{Case $2\frac{1}{2} < a \leq 3$}\label{ss:lower}
Define 
$$\begin{array}{lll}
K_1 = \frac{12}{c_1} \frac{1}{\lambda^2} \log \left(\frac{1}{\lambda}\right), &K_2 = \frac{18a}{c_1\log 2} \frac{1}{\lambda^2} \log^2 \left(\frac{1}{\lambda}\right)\,&K_i = \frac{1}{\lambda^3} + i - 3,\; i\geq 3;\medskip\\
R_1 = 1,&R_2 = 3,&R_i = \lceil a\log_2 K_i\rceil,\; i \geq 3,
\end{array}$$
where $c_1$ is as in Lemma \ref{basic}. We will show that, for some $c > 0$ and $\lambda$ small enough,
\begin{eqnarray}
&&\Q_{p,q}\left({\mathop \cap_{i=1}^\infty} \; \mathcal{M}(o, R_i, K_i)\right) > c\; \left(\frac{\lambda^2}{\log \left(\frac{1}{\lambda}\right)}\right)^{a-2} \text{ and}\label{eqn:eqRK1}\\
&&\Q_{p,q}^\lambda \left({\mathop \cap_{i=1}^\infty}\; \mathcal{N}(o, R_i, K_i)\;\left|\;{\mathop \cap_{i=1}^\infty}\;\mathcal{M}(o, R_i, K_i) \right.\right) > c\lambda. \label{eqn:eqRK2}
\end{eqnarray}
Since $\{\xi^o_t \neq \varnothing\;\forall t\} \supset \cap_{i=1}^\infty\;\mathcal{N}(o, R_i, K_i)$, these inequalities will give us the desired result.

To prove (\ref{eqn:eqRK2}) we assume $\cap_{i=1}^\infty\;\mathcal{M}(o, R_i, K_i)$ occurs and let $y_1, y_2, \ldots$ denote sites with $\deg(y_i) > K_i$ and $d(o, y_i) \leq R_i$ (so that $d(y_i, y_{i+1}) \leq 2R_{i+1}$). With probability $\frac{\lambda}{1+\lambda}$, the root infects its neighbour $y_1$ before recovering (unless the root itself is equal to $y_1$, in which case this probability is 1). Then, by Lemma \ref{basic}$(i.)$, with probability larger than $e^{-1}(1-e^{-c_1\lambda\deg(y_1)})$, we have $\frac{|\xi^o_t \;\cap\;B(y_1,1)|}{\lambda \cdot|B(y_1,1)|} > \frac{1}{16e}$ for some $t > 0$, so that $\mathcal{N}(o, R_1, K_1)$ occurs.  Since, for each $i$,
$$\deg(y_i) > K_i > \frac{3}{c_1}\left(\frac{1}{\lambda^2} \log \frac{1}{\lambda}\right)\cdot 2R_{i+1} \geq \frac{3}{c_1} \left(\frac{1}{\lambda^2} \log \frac{1}{\lambda}\right)\cdot d(y_i, y_{i+1}),$$
we can repeatedly use Lemma \ref{lem:infNei} to guarantee that, with probability larger than $1-2\sum_{i=1}^\infty\;e^{-c_1\lambda^2K_i}$, for each $i$ there exists $t > 0$ such that $\frac{|\xi^o_t \;\cap\;B(y_i,1)|}{\lambda \cdot|B(y_i,1)|} > \frac{1}{16e}$. This shows that
$$\Q_{p,q}^\lambda\left({\mathop \cap_{i=1}^\infty \;\mathcal{N}(o, R_i, K_i)} \left| {\mathop \cap_{i=1}^\infty \;\mathcal{M}(o, R_i, K_i)}\right.\right) > \frac{\lambda}{1 + \lambda} \cdot (e^{-1}(1-e^{-c_1\lambda K_1}))\cdot \left(1 - 2\sum_{i=1}^\infty e^{-c_1\lambda^2K_i}\right) > \frac{\lambda}{3}$$
when $\lambda$ is small enough.

We now turn to (\ref{eqn:eqRK1}). By (\ref{c0a-2}) we have
\begin{equation}\label{eq:lowboundEq31}\Q_{p,q}(\mathcal{M}(o, R_1, K_1)) \geq c_0\left(\frac{12}{c_1} \frac{1}{\lambda^2}\log \frac{1}{\lambda} \right)^{-(a-2)}.\end{equation}
On the event $\mathcal{M}(o, R_1, K_1) = \mathcal{M}(o, 1, K_1)$, again let $y_1$ denote a vertex in $B(o, 1)$ with degree larger than $K_1$. By Lemma \ref{lem:auxFinal}, we have
$$\Q_{p,q}\left(\mathcal{M}(y_1, 2, K_1\log K_1)\;|\;\mathcal{M}(o, 1, K_1) \right) > \bar c$$
for some $\bar c > 0$ that does not depend on $\lambda$. Since $K_1 \log K_1 > K_2$ for $\lambda$ small enough, this implies that
\begin{equation}\label{eq:lowboundEq32}\Q_{p,q}\left(\mathcal{M}(o, 3, K_2)\;|\;\mathcal{M}(o, R_1, K_1) \right) > \bar c.\end{equation}

To give a lower bound for the probability of $\mathcal{M}(o, R_i, K_i)$ when $i \geq 3$, we observe that there are at least $2^{R_i-1}$ vertices at distance $R_i-1$ from the root, by the fact that the degrees of all vertices are at least 3. Thus,
$$\begin{aligned} \Q_{p,q}(\mathcal{M}(o, R_i, K_i)) &\geq 1 - (1 - c_0K_i^{-(a-2)})^{2^{R_i-1}} \\&\geq 1 -\exp\left\{-\frac{c_0}{2}\cdot K_i^{-(a-2)}\cdot K_i^{a} \right\} = 1 - \exp\left\{-\frac{c_0}{2}\left(\frac{1}{\lambda^3} + i - 3\right)^2\right\}.\end{aligned}$$
We then get
\begin{equation}\label{eq:RKfinal}
\Q_{p,q}\left(\left({\mathop \cap_{i=3}^\infty}\; \mathcal{M}(o, R_i, K_i)\right)^c \right) < \sum_{i=3}^\infty e^{-\frac{c_0}{2}\left(\frac{1}{\lambda^3} + i -3\right)^2}
\end{equation}
and, as $\lambda \to 0$, the right-hand side converges to 0 faster than any power of $\lambda$. Inequality (\ref{eqn:eqRK1}) now follows from (\ref{eq:lowboundEq31}), (\ref{eq:lowboundEq32}) and (\ref{eq:RKfinal})

\subsection{Case $a > 3$}
This case is very similar to the previous one, only simpler. The proof can be repeated with the constants now given by
$$\begin{array}{ll}K_1 = \frac{12a}{\log2} \cdot \frac{1}{\lambda^2} \log^2 \left(\frac{1}{\lambda}\right),& K_i = \frac{1}{\lambda^3}+i-2,\;i\geq 2;\medskip\\R_1 = 1,&R_i = \lceil a\log_2K_i \rceil,\; i \geq 2. \end{array}$$
It is thus shown that
$$\Q_{p,q}\left({\mathop \cap_{i=1}^\infty}\mathcal{M}(o, R_i, K_i) \right) > c\;\left(\frac{\lambda^2}{\log^2\left(\frac{1}{\lambda}\right)}\right)^{a-2}\text{and }\;\; \Q_{p,q}^\lambda \left( {\mathop \cap_{i=1}^\infty} \mathcal{N}(o, R_i, K_i)\;\left|\;{\mathop \cap_{i=1}^\infty} \mathcal{M}(o, R_i, K_i) \right. \right) > c\lambda.$$

\subsection{Case $2 < a \leq 2\frac{1}{2}$}
Recall the definition of $\hat q$ in (\ref{eq:defhatq}). We will show that
\begin{equation}\label{eq:afred2}\Q^\lambda_{\hat q}\left(\xi^o_t \neq \varnothing \; \forall t\right) > c \lambda^{\frac{1}{3-a} - 1}. \end{equation}
This will give the desired result since
$$\Q^\lambda_{p,q}\left(\xi^o_t \neq \varnothing \; \forall t\right) \geq \frac{\lambda}{1+\lambda} \cdot\Q^\lambda_{\hat q}\left(\xi^o_t \neq \varnothing \; \forall t\right).$$

In order to study the contact process $(\xi^o_t)_{t \geq 0}$ on $\T$ (a tree sampled from $\Q_{\hat q}$) and started from only the root infected, we introduce a comparison process $(\eta_t)_{t \geq 0}$, started from the same initial condition. $(\eta_t)$
will be a modification of the contact process: sites will become permanently set to value $0$ the first time (if ever) that they return to value $0$ after having taken value $1$. Consequently, sites cannot infect sites closer to the root than themselves.

More precisely, $(\eta_t)_{t \geq 0}$ is defined as follows. Suppose we are given a tree $\T$ and a graphical construction $\{(D_x)_{x \in \T},\; (D_{x,y})_{x,y \in \T,\; x \sim y}\}$ with parameter $\lambda > 0$. Let $\sigma_o = \inf D_o$ be the first recovery time at the root and set $\eta_t(o) = I_{[0, \sigma_o)}(t)$ for all $t \geq 0$. Now assume $(\eta_t(x))_{t \geq 0}$ has been defined for all $x$ at distance $m$ or less from the root, and fix $y$ with $d(o,y) = m+1$. Let $z$ be the parent of $y$, that is, $d(o, z) = m$ and $d(z, y) = 1$. Let $\tau_y = \inf\left(\{t: \eta_t(z) = 1\} \cap D_{z, y}\right)$ and, if $\tau_y < \infty$, let $\sigma_y = \inf\left([\tau_y, \infty) \cap D_y \right)$. Now, if $\tau_y < \infty$, set $\eta_t(y) = I_{[\tau_y, \sigma_y)}(t)$ for all $t$ and otherwise set $\eta_t(y) = 0$ for all $t$.

Define
$$X_m:=\left|\{z:d(o,z) = m, \exists  t < \infty \text{ with } \eta_t(z) = 1 \}\right|$$
for $m=0,1,2,\ldots$ Then $(X_m)_{m\geq0}$ is a branching process and is in principle easy to analyze. We start with the following lemma, which gives
a lower bound for the probability $\Q_{\hat q}^\lambda(X_1\geq
k)$.

\begin{lemma}\label{l:1}
There exists $c_{2.1}$ such that, for $\lambda \in (0,1)$ and all $k \geq 1$,
$$\Q_{\hat q}^\lambda(X_1\geq k)\geq c_{2.1}\left(\lambda/k\right)^{a-2}.$$
\end{lemma}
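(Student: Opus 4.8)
The plan is to analyze $X_1$ directly in terms of the graphical construction restricted to the root and its neighbours. Recall that $X_1$ counts the neighbours $z$ of $o$ for which there exists $t<\infty$ with $\eta_t(z)=1$; by the definition of $(\eta_t)$, this happens precisely when the root transmits to $z$ (along an arrow of $D_{o,z}$) before the root's first recovery time $\sigma_o=\inf D_o$. So, conditionally on $\deg(o)=d$ and on $\sigma_o=s$, the events $\{z \text{ ever infected}\}$ for the $d$ neighbours are independent, each with probability $1-e^{-\lambda s}$. Thus $X_1$ is, conditionally, a $\mathsf{Bin}(d, 1-e^{-\lambda s})$ random variable, and unconditionally we must average over $d\sim \hat q$ and $s\sim \mathrm{Exp}(1)$.

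First I would dispose of the easy range $k\le$ (a small multiple of) $1/\lambda$: here $(\lambda/k)^{a-2}\ge (\text{const})\lambda^{a-2}$ is not too small, and one can get a lower bound of that order just from the event that $\deg(o)$ is at least some fixed large constant $m_0$ and the root survives for time of order $1$ while infecting at least $k$ of its (at least $m_0$) children — using the Binomial deviation bound \reff{mark} exactly as in the proof of Lemma \ref{basic}(i.). Actually, the cleaner unified route is: fix $s\in[1,2]$ say (an event of probability $\asymp 1$ for $\sigma_o$), and condition on $\deg(o)=d$; then $1-e^{-\lambda s}\ge \lambda/2$ for $\lambda<1$, so $X_1$ stochastically dominates $\mathsf{Bin}(d,\lambda/2)$. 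To have $X_1\ge k$ with decent probability we want $d\lambda/2$ to be a constant multiple of $k$, i.e. $d$ of order $k/\lambda$; then \reff{mark} gives that $\mathsf{Bin}(d,\lambda/2)\ge k$ with probability bounded below by a universal constant.

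The main point, then, is the probability that $\deg(o)$ is of order $k/\lambda$. Since the root's degree has law $\hat q=\overline q_{\,?}$ — wait, in this subsection the tree is sampled from $\Q_{\hat q}$, so the root's degree also has law $\hat q$, the truncation of $q$ at some level $m$; here $m$ will be chosen large (of order $\lambda^{-1/(3-a)}$ or larger) so that the relevant degrees $\asymp k/\lambda$ lie below the truncation level for all $k$ in the range of interest. Using \reff{c0a-1}, $\hat q$ puts mass $\asymp (k/\lambda)^{-(a-1)}$ on a single value $\asymp k/\lambda$, hence mass $\gtrsim (k/\lambda)^{-(a-1)}\cdot(k/\lambda) = (k/\lambda)^{-(a-2)}$ — no wait, one should be careful: summing $\hat q(j)$ over an interval of values $j\asymp k/\lambda$ of length $\asymp k/\lambda$ gives $\asymp (k/\lambda)\cdot(k/\lambda)^{-(a-1)} = (k/\lambda)^{-(a-2)} = (\lambda/k)^{a-2}$. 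Combining: $\Q_{\hat q}^\lambda(X_1\ge k)\ge \P(\sigma_o\in[1,2])\cdot \P(\deg(o)\asymp k/\lambda)\cdot \P(\mathsf{Bin}(\cdot,\lambda/2)\ge k)\gtrsim (\lambda/k)^{a-2}$, which is the claim.

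The one genuine obstacle is bookkeeping the truncation: one must check that throughout the range of $k$ that will actually be used later (presumably $k$ up to something like $\lambda^{-1/(3-a)}$, so that $k/\lambda$ up to $\lambda^{-1-1/(3-a)}$), the degree value $\asymp k/\lambda$ is still below the truncation parameter $m$ defining $\hat q$, so that $\hat q(j)=q(j)$ there and \reff{c0a-1} applies; for $k$ beyond that range the bound $(\lambda/k)^{a-2}$ is so small that it can be obtained trivially (e.g. it may even exceed $1$ or be dominated by the probability of a single large-degree atom), so the lemma as stated for "all $k\ge 1$" still goes through. I would state the choice of truncation level $m$ explicitly at the start of the subsection and carry it through; with that in hand the estimate above is routine.
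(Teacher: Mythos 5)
Your core argument is the paper's own proof of Lemma \ref{l:1}: condition on the root not recovering during a unit time interval (probability $e^{-1}$) and on $\deg(o)\gtrsim k/\lambda$ (probability $\gtrsim(\lambda/k)^{a-2}$ by the power-law tail), and observe that conditionally $X_1$ dominates a Binomial with mean at least $2k$, which exceeds $k$ with probability bounded below by a universal constant; whether one sums $\hat q$ over an interval of degrees of order $k/\lambda$ or, as the paper does, uses the tail $\hat q\left[\frac{2k}{1-e^{-\lambda}},\infty\right)$ directly is immaterial.

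The only point to correct is your fallback concerning the truncation. If the root's degree law really were cut off at a finite ($\lambda$-dependent) level $m$, then $X_1\leq\deg(o)\leq m$ almost surely, so $\Q_{\hat q}^\lambda(X_1\geq k)=0$ for $k>m$ while $c_{2.1}(\lambda/k)^{a-2}>0$; the statement ``for all $k\geq1$'' would then simply be false, and it cannot be rescued trivially: $(\lambda/k)^{a-2}\leq 1$ for every $k\geq1$ and $\lambda<1$, and the truncation in \reff{eq:defhatq} sends the excess mass down to the value $1$, so there is no large-degree atom to invoke. The resolution is that the law governing the root in this subsection must (and, as used in the paper's proof, does) retain the full power-law tail, i.e. $\hat q[m,\infty)\geq c\,m^{-(a-2)}$ at every level as in \reff{c0a-2}; with that understanding no restriction on the range of $k$ is needed and your main estimate applies verbatim.
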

\begin{proof} For $k\geq1$, we have
$$\begin{aligned}\Q_{\hat q}^\lambda(X_1 \geq k) &\geq \Q_{\hat q}^\lambda\left(X_1 \geq k,\; \sigma_o \geq 1,\; \deg(o) \geq \frac{2k}{1-e^{-\lambda}}\right)\\&\geq \Q_{\hat q}^\lambda \left(\sigma_o \geq 1,\; \deg(o) \geq \frac{2k}{1-e^{-\lambda}} \right)\cdot \P\left(\mathsf{Bin}\left(\left\lceil\frac{2k}{1-e^{-\lambda}}\right \rceil,\;1-e^{-\lambda} \right) \geq k\right)\\
&=e^{-1}\cdot \hat q\left[\frac{2k}{1-e^{-\lambda}},\;\infty \right)\cdot \P\left(\mathsf{Bin}\left(\left \lceil\frac{2k}{1-e^{-\lambda}}\right \rceil,\;1-e^{-\lambda} \right) \geq k\right) \\&\geq C \cdot \hat q\left[\frac{2k}{1-e^{-\lambda}},\;\infty \right) \geq c_{2.1} \left(\frac{\lambda}{k} \right)^{a-2}
.\end{aligned}$$\end{proof}

As a consequence of the above result, $X_1$ has infinite expectation, so, with positive probability, $X_n \to \infty$ as $n \to \infty$. We define the generating function for the law of $X_1$:
$$\Psi_\lambda(s) = \sum_{n=0}^\infty \Q_{\hat q}^\lambda(X_1 = n) \cdot s^n \qquad (s \in (0,1]).$$
We can use Lemma \ref{l:1} to get the following estimate for $\Psi_\lambda(s)$, where the infection parameter $\lambda > 0$ is fixed.
\begin{lemma}
\label{l:2} There exists $c_{2.2} > 0$ such that, for $\lambda \in (0,1)$ and $s \in [1/2, 1]$,
$$\Psi_\lambda(s) \leq 1 - c_{2.2}(\lambda(1-s))^{a-2}.$$
\end{lemma}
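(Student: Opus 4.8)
The generating function bound should follow from Lemma \ref{l:1} by a standard tail-to-generating-function conversion. The starting point is the elementary identity $1 - \Psi_\lambda(s) = \sum_{n \geq 1} \Q_{\hat q}^\lambda(X_1 \geq n)(s^{n-1} - s^n)\cdot\frac{1}{1}$; more usefully, I would use $1 - \Psi_\lambda(s) = (1-s)\sum_{n \geq 0} \Q_{\hat q}^\lambda(X_1 > n) s^n$, valid for $s \in [0,1)$. Plugging in the lower bound $\Q_{\hat q}^\lambda(X_1 \geq k) \geq c_{2.1}(\lambda/k)^{a-2}$ from Lemma \ref{l:1} (adjusting by a constant to pass between $X_1 \geq k$ and $X_1 > n$), one gets
$$1 - \Psi_\lambda(s) \geq c\,(1-s)\,\lambda^{a-2} \sum_{n \geq 1} n^{-(a-2)} s^n.$$
So the task reduces to showing $(1-s)\sum_{n\geq 1} n^{-(a-2)} s^n \geq c'(1-s)^{a-2}$ for $s \in [1/2,1)$ — equivalently, $\sum_{n\geq 1} n^{-(a-2)} s^n \gtrsim (1-s)^{a-3}$.

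**Key steps.** First I would record the identity $1-\Psi_\lambda(s) = (1-s)\sum_{n \geq 0}\Q_{\hat q}^\lambda(X_1 > n)s^n$ and substitute the tail bound, being slightly careful that Lemma \ref{l:1} bounds $\Q(X_1 \geq k)$; writing $\Q(X_1 > n) = \Q(X_1 \geq n+1) \geq c_{2.1}(\lambda/(n+1))^{a-2} \geq c_{2.1}2^{-(a-2)}(\lambda/\max(n,1))^{a-2}$ handles this cleanly. Second, I would estimate the resulting power series $\sum_{n \geq 1} n^{-(a-2)} s^n$ from below. Since $2 < a \leq 5/2$ we have $0 < a-2 \leq 1/2$, so the exponent $-(a-2)$ lies in $[-1/2, 0)$; the function $x \mapsto x^{-(a-2)}s^x$ is unimodal on $(0,\infty)$, so I can compare the sum to the integral $\int_0^\infty x^{-(a-2)}s^x\,dx = \int_0^\infty x^{-(a-2)}e^{-x\log(1/s)}\,dx = \Gamma(3-a)\,(\log(1/s))^{a-3}$, a Gamma-function computation. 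For $s \in [1/2,1)$ one has $\log(1/s) \asymp 1-s$, so the integral is $\asymp (1-s)^{a-3}$; the sum-to-integral comparison costs only a bounded additive/multiplicative error (the first term $s \geq 1/2$ absorbs the difference near $x=0$, and unimodality controls the tail), giving $\sum_{n \geq 1} n^{-(a-2)}s^n \geq c'(1-s)^{a-3}$ uniformly on $[1/2,1)$. Third, combine: $1-\Psi_\lambda(s) \geq c\,(1-s)\lambda^{a-2}(1-s)^{a-3} = c(\lambda(1-s))^{a-2}$, which rearranges to the claim. For $s=1$ both sides are $0$ (or one notes $\Psi_\lambda(1)=1$), so the estimate holds on all of $[1/2,1]$.

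**Main obstacle.** The only genuinely delicate point is the uniformity in $s$ as $s \uparrow 1$ in the bound $\sum_{n \geq 1} n^{-(a-2)}s^n \geq c'(1-s)^{a-3}$: one must be sure the implied constant does not degenerate. This is where unimodality of $x \mapsto x^{-(a-2)}s^x$ (peak at $x = (a-2)/\log(1/s)$, which $\to \infty$ as $s \uparrow 1$) is essential, because it lets the sum-integral comparison error be bounded by a single term of the series rather than something blowing up. An alternative, perhaps cleaner, route is to restrict the sum to the range $1 \leq n \leq 1/(1-s)$, on which $s^n \geq s^{1/(1-s)} \geq e^{-1}$ for $s$ near $1$ (using $\log(1/s) \leq (1-s)/s \leq 2(1-s)$ for $s \geq 1/2$, so $n\log(1/s) \leq 2$), whence $\sum_{1 \leq n \leq 1/(1-s)} n^{-(a-2)} s^n \geq e^{-2}\sum_{1 \leq n \leq 1/(1-s)} n^{-(a-2)} \geq c'' (1-s)^{-(1-(a-2))} = c''(1-s)^{a-3}$, the last step being the standard estimate $\sum_{n \leq N} n^{-\alpha} \asymp N^{1-\alpha}$ for $\alpha \in (0,1)$ with $N = \lfloor 1/(1-s)\rfloor$. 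I would present this second version, as it avoids the Gamma function entirely and makes the uniformity transparent.
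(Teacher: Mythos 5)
Your proposal is correct as presented (the second, restricted-sum version), and it reaches the bound by a slightly different route than the paper. The paper's own proof is a one-line truncation: by monotonicity of $s^m$, for any integer $k$ one has $\Psi_\lambda(s)\le \Q_{\hat q}^\lambda(X_1\le k)+s^k\,\Q_{\hat q}^\lambda(X_1>k)=1-\Q_{\hat q}^\lambda(X_1>k)(1-s^k)$; choosing $k=\lfloor 1/(1-s)\rfloor$, the factor $1-s^k$ is bounded away from zero on $[1/2,1]$ and Lemma \ref{l:1} gives $\Q_{\hat q}^\lambda(X_1\ge k)\ge c_{2.1}(\lambda(1-s))^{a-2}$, which is the claim. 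You instead use the exact Abel identity $1-\Psi_\lambda(s)=(1-s)\sum_{n\ge0}\Q_{\hat q}^\lambda(X_1>n)s^n$ and sum the tail bound over $n\le 1/(1-s)$, where $s^n\ge e^{-2}$; this is a correct and complete argument, and in effect it is the paper's argument with the single tail value at $k\approx 1/(1-s)$ replaced by an average of tail values over the same range (the extra factor $1/(1-s)$ gained from summing the tails is exactly cancelled by the prefactor $1-s$, so the two routes give the same order). What the paper's version buys is brevity and generality: it uses Lemma \ref{l:1} at a single $k$ and works for every $a>2$, whereas your estimate $\sum_{n\le N}n^{-(a-2)}\asymp N^{3-a}$ needs $a-2<1$ (harmless here, since the lemma lives in the regime $2<a\le 2\tfrac12$). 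Two small caveats on your first (Gamma-function) sketch, which you rightly did not adopt: the function $x\mapsto x^{-(a-2)}s^x$ is in fact strictly decreasing on $(0,\infty)$, not unimodal with peak at $(a-2)/\log(1/s)$ (that would be the exponent $+(a-2)$), and the sum-to-integral comparison then loses a bounded amount near $x=0$ which must be absorbed separately for $s$ bounded away from $1$ (where the trivial bound by the first term suffices); your second version avoids both issues.
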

\begin{proof}
By monotonicity of $s^m$ in $m$, we have for any positive integer $k$
$$\Psi_\lambda(s) \leq \sum_{i=0}^k \Q_{\hat q}^\lambda(X_1 = i) + s^k \sum_{i = k+1}^\infty \Q_{\hat q}^\lambda(X_1 = i) = 1 - \Q_{\hat q}^\lambda(X_1 \geq k)\cdot (1-s^k).$$
We choose $k$ equal to $\left \lfloor\frac{1}{1-s}\right \rfloor$ which gives the desired inequality, since $s \mapsto 1 - s^{\lfloor \frac{1}{1-s}\rfloor}$ is bounded away from zero for $s \in [1/2, 1]$.
\end{proof}

From Lemma \ref{l:2} we can easily get the following
\begin{corollary}
There exists $c_{2.3} > 0$ such that, for $\lambda > 0$ small enough,
$$\Q_{\hat q}^\lambda(X_n \neq 0 \; \forall n) \geq c_{2.3}\;\lambda^{\frac{a-2}{3-a}}.$$
\end{corollary}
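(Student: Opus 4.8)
The plan is to read off $\Q^\lambda_{\hat q}(X_n \neq 0\ \forall n)$ as the survival probability of the branching process $(X_m)$ and bound it from below using only Lemma \ref{l:2}. Recall that if $\Psi_\lambda$ is the offspring generating function, then the extinction probabilities $q_n := \Q^\lambda_{\hat q}(X_n = 0)$ satisfy $q_0 = 0$ and $q_{n+1} = \Psi_\lambda(q_n)$; since $\Psi_\lambda$ is increasing and continuous on $[0,1]$, the sequence $(q_n)$ increases to $\rho$, the least fixed point of $\Psi_\lambda$ in $[0,1]$, and $\{X_n \neq 0\ \forall n\}$ is exactly the survival event, so $\Q^\lambda_{\hat q}(X_n \neq 0\ \forall n) = 1 - \rho$. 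As noted right after Lemma \ref{l:1}, $X_1$ has infinite mean, so the process is supercritical and $\rho < 1$ strictly; this strictness is what will let us divide by $1-\rho$ below.

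It then remains to show $1 - \rho \geq c_{2.3}\,\lambda^{(a-2)/(3-a)}$, which I would do by a short case split at $\rho = 1/2$. If $\rho \leq 1/2$, then $1 - \rho \geq 1/2$, which already exceeds $\tfrac12\,\lambda^{(a-2)/(3-a)}$ for $\lambda \leq 1$, because the exponent $(a-2)/(3-a)$ is strictly positive in the regime $2 < a \leq 2\tfrac12$. If instead $\rho \in (1/2,1)$, then Lemma \ref{l:2} applies at $s = \rho$, and combining it with the fixed-point identity $\Psi_\lambda(\rho) = \rho$ gives $\rho \leq 1 - c_{2.2}\big(\lambda(1-\rho)\big)^{a-2}$, i.e. $1 - \rho \geq c_{2.2}\,\lambda^{a-2}(1-\rho)^{a-2}$. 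Dividing by $(1-\rho)^{a-2} > 0$ and using $1 - (a-2) = 3 - a$ yields $(1-\rho)^{3-a} \geq c_{2.2}\,\lambda^{a-2}$, and since $3 - a > 0$ we may raise both sides to the power $1/(3-a)$ to get $1 - \rho \geq c_{2.2}^{1/(3-a)}\,\lambda^{(a-2)/(3-a)}$. Taking $c_{2.3} := \min\{\tfrac12,\ c_{2.2}^{1/(3-a)}\}$, a constant depending only on $a$ and the law $p$, handles both cases.

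I do not expect a real obstacle here; once Lemma \ref{l:2} is in hand the statement is essentially an exercise. The points needing care are: $c_{2.2}$ is uniform in $\lambda$ (as stated in Lemma \ref{l:2}), so $c_{2.3}$ genuinely does not depend on $\lambda$; the estimate of Lemma \ref{l:2} is only valid for $s \in [1/2,1]$, which is exactly why the case $\rho \leq 1/2$ must be disposed of first; and the exponent produced, $\tfrac{a-2}{1-(a-2)} = \tfrac{a-2}{3-a} = \tfrac1{3-a} - 1$, is precisely the one in (\ref{eq:afred2}) — indeed, since $\eta_t \leq \xi^o_t$ we have $\{X_n \neq 0\ \forall n\} \subseteq \{\xi^o_t \neq \varnothing\ \forall t\}$, so this corollary immediately yields (\ref{eq:afred2}) and hence the lower bound in Proposition \ref{prop:main} for $2 < a \leq 2\tfrac12$.
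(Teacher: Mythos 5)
Your proof is correct and follows essentially the same route as the paper: identify $\Q_{\hat q}^\lambda(X_n \neq 0\;\forall n)$ with one minus the fixed point of $\Psi_\lambda$, plug in Lemma \ref{l:2}, and solve the resulting inequality $1-\rho \geq c_{2.2}(\lambda(1-\rho))^{a-2}$ to extract the exponent $\frac{a-2}{3-a}$. Your explicit case split at $\rho = 1/2$ is a slightly more careful handling of the restriction $s \in [1/2,1]$ in Lemma \ref{l:2}, which the paper's one-line argument leaves implicit; otherwise the two arguments coincide.
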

\begin{proof}
We know that $X_1$ has infinite expectation and so from the standard theory of branching processes (see e.g. \cite{durprob}), we have that the survival probability $\beta$ satisfies
$$\beta  = 1 - \Psi_\lambda(1-\beta).$$
By Lemma \ref{l:2}, the right-hand side is larger than $c_{2.2}(\lambda \beta)^{a-2}$, so $\beta > c_{2.2}(\lambda\beta)^{a-2}$, so $\beta >(c_{2.2})^\frac{1}{3-a}\cdot \lambda^\frac{a-2}{3-a}.$
\end{proof}

Since $\frac{a-2}{3-a}=\frac{1}{3-a} - 1$ and $\{\xi^o_t \neq \varnothing \; \forall t\} \supset \{X_n \neq 0\; \forall n\}$, (\ref{eq:afred2}) is now proved.


\section{Extinction estimates on star graphs and trees}
\label{s:extestimate}
Our main objective in this section is to establish estimates that allow us to say, under certain conditions, that the contact process does not spread too much and does not survive too long. In Lemma \ref{lembound}, we obtain upper bounds for the probability of existence of certain infection paths on finite trees of bounded degree. In Lemma \ref{lem:extStar}, we obtain a result for star graphs that works in the reverse direction as that of Lemma \ref{basic}: with high probability, the contact process on a star graph $S$ does not survive for longer than $e^{C\lambda^2|S|}$, for some large $C > 0$.

\begin{lemma} \label{lembound}
\noindent Let $\lambda <\frac{1}{2}$ and $T$ be a finite tree with maximum degree bounded by $\frac{1}{8\lambda^2}$. Then, for any $x, y \in T$ and $0 < t < t'$,\medskip\\
$(i.)\;P_T^\lambda\left(\;(x,0) \;\lra\; \{y\} \times \R_+\;\right) \leq (2\lambda)^{d(x,y)};$\medskip\\
$(ii.)\; P_{T}^\lambda\left(\;(x,0) \;\lra\; \{y\} \times [t, \infty)\;\right) \leq (2\lambda)^{d(x,y)}\cdot e^{-t/4};$\medskip\\
$(iii.)\;P_{T}^\lambda \left( \xi^T_t \ne \emptyset \right) \ \leq \ |T|^2\cdot e^{-t/4};$\medskip\\
$(iv.)\;P_{T}^\lambda\left(\;\{x\} \times [0, t] \;\lra\; \{y\} \times \R_+\;\right) \leq (t+1)\cdot(2\lambda)^{d(x,y)}$;\medskip \\
If $x \neq y$,\medskip\\
$(v.)\;P_{T}^\lambda\left(\;\exists \ell < \ell':\;(x,0) \;\lra\; (y, \ell) \;\lra\; (x,\ell') \;\right) \leq (2\lambda)^{2d(x,y)};$\medskip\\
$(vi.)\;P_{T}^\lambda\left(\;\exists \ell:\;\{x\}\times [0,t] \;\lra\; (y, \ell) \text{ and } (y, \ell) \;\lra\; \{x\} \times [\ell, \infty)\;\right) \leq (t+1)\cdot(2\lambda)^{2d(x,y)}.$
\end{lemma}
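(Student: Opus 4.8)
My plan is to derive all of (i)--(vi) from a single estimate: for the fixed target $y$, the process $M_t:=\sum_{w\in\xi_t}(2\lambda)^{d(w,y)}$ is a supermartingale for the contact process on $T$ which, moreover, contracts at exponential rate $1/4$. To see this, apply the generator to $f(\xi)=\sum_{w\in\xi}g(w)$ with $g(w):=(2\lambda)^{d(w,y)}$, obtaining $\Omega f(\xi)=-\sum_{w\in\xi}g(w)+\lambda\sum_{w\in\xi}\sum_{w'\sim w,\,w'\notin\xi}g(w')$. Because $T$ is a tree, each $w\ne y$ has exactly one neighbour at distance $d(w,y)-1$ and all other $\deg(w)-1$ neighbours at distance $d(w,y)+1$ (and $y$ has all neighbours at distance $1$), so $\sum_{w'\sim w}g(w')\le\big(\tfrac1{2\lambda}+2\lambda\deg(w)\big)g(w)$, whence $\lambda\sum_{w'\sim w}g(w')\le\big(\tfrac12+2\lambda^2\deg(w)\big)g(w)\le\tfrac34\,g(w)$ using $\deg(w)\le\frac1{8\lambda^2}$. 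Hence $\Omega f\le-\tfrac14 f$, and by Dynkin's formula together with the Markov property $\E[M_t\mid\mathcal F_s]\le e^{-(t-s)/4}M_s\le M_s$. This step is the heart of the matter: it is the only place the precise bound $\frac1{8\lambda^2}$ is used, and the exponent $1/4$ in (ii)--(iv) is just $1-\tfrac34$ (the $\tfrac12$ coming from the unique geodesic direction on a tree, the $\tfrac14$ from the degree bound).

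From here (i) is optional stopping at $\tau_y:=\inf\{t:y\in\xi^x_t\}$: since $M_{\tau_y\wedge t}\ge g(y)\mathbf 1\{\tau_y\le t\}=\mathbf 1\{\tau_y\le t\}$ and $\E[M_{\tau_y\wedge t}]\le M_0=g(x)$, letting $t\to\infty$ gives $P_T^\lambda((x,0)\lra\{y\}\times\R_+)=P_T^\lambda(\tau_y<\infty)\le(2\lambda)^{d(x,y)}$. For (ii), conditioning at time $t$ and using the Markov property with a union bound over the vertex occupied at time $t$ by an infection path from $(x,0)$ gives $P((x,0)\lra\{y\}\times[t,\infty)\mid\mathcal F_t)\le\sum_{w\in\xi^x_t}P((w,0)\lra\{y\}\times\R_+)\le\sum_{w\in\xi^x_t}g(w)=M_t$, so that $P((x,0)\lra\{y\}\times[t,\infty))\le\E[M_t]\le e^{-t/4}(2\lambda)^{d(x,y)}$ by the contraction. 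Statement (iii) then follows immediately from (ii) and $2\lambda<1$ via $P_T^\lambda(\xi^T_t\ne\varnothing)\le\sum_{x\in T}\sum_{y\in T}P((x,0)\lra\{y\}\times[t,\infty))\le|T|^2e^{-t/4}$.

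For (iv) I would note that an infection path leaving $x$ at a time $s\in[0,t]$ can be prolonged backwards until just after the last recovery mark of $x$ preceding $s$; hence $\{x\}\times[0,t]\lra\{y\}\times\R_+$ is contained in the union over $r\in\{0\}\cup(D_x\cap[0,t])$ of $\{(x,r^+)\lra\{y\}\times\R_+\}$. The term $r=0$ contributes at most $(2\lambda)^{d(x,y)}$ by (i); for the rest, the Mecke equation for the rate--$1$ Poisson process $D_x$ together with time--homogeneity gives $\E\big[\sum_{r\in D_x\cap[0,t]}\mathbf 1\{(x,r^+)\lra\{y\}\times\R_+\}\big]\le\int_0^t(2\lambda)^{d(x,y)}\,dr=t(2\lambda)^{d(x,y)}$, and adding the two yields (iv). Finally, (v) and (vi) are handled by restarting the graphical construction at the stopping time $\tau_y$: on $\{\tau_y<\infty\}$ (probability $\le(2\lambda)^{d(x,y)}$ by (i)), the strong Markov property and a second application of (i) to the shifted construction bound by $(2\lambda)^{d(x,y)}$ the chance that the infection subsequently travels from $y$ back to $x$, giving $(2\lambda)^{2d(x,y)}$ in (v); in (vi) the initial interval $\{x\}\times[0,t]$ is absorbed exactly as in (iv), producing the extra factor $t+1$. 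The one point requiring real care in these last two parts is making the restart rigorous --- checking that a return path present at a later time can be routed back to $\tau_y$, up to the same Poisson bookkeeping used in (iv) --- while everything else is a direct consequence of the supermartingale property established in the first step.
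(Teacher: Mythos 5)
Parts (i.)--(iii.) of your argument coincide with the paper's: the same weight function $g(w)=(2\lambda)^{d(w,y)}$, the same drift computation using the degree bound $\tfrac1{8\lambda^2}$ to get $\Omega f\le-\tfrac14 f$, and optional stopping (your variant of (ii.), conditioning at time $t$ and union--bounding over $\xi^x_t$, is equivalent to the paper's use of the exponential supermartingale $e^{u/4}M_u$). Your (iv.) is genuinely different from the paper -- the paper keeps $x$ artificially infected up to time $t$ and uses a time-inhomogeneous weight $\max(t+1-u,1)$ at $x$, while you decompose the starting interval over $\{0\}\cup(D_x\cap[0,t])$ and invoke the Mecke/Slivnyak formula -- and your route is correct and gives exactly $(t+1)(2\lambda)^{d(x,y)}$, since the event attached to a recovery mark at time $r$ depends only on the construction after $r$.

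The gap is in (v.)--(vi.). Restarting at the stopping time $\tau_y$ and applying the strong Markov property plus (i.) bounds only the event that the return path emanates from $(y,\tau_y)$; but the event in (v.) allows the return to start from $(y,\ell)$ for \emph{any} $\ell$ with $(x,0)\lra(y,\ell)$, and $y$ may recover after $\tau_y$ and be reinfected later through the cluster of $(x,0)$, so $(y,\tau_y)\lra(x,\ell')$ need not hold and the event is not covered. This is precisely the difficulty the paper's proof is built to handle: it splits the infection into $\eta_u$ (reached from the source by paths avoiding $y$) and $\eta'_u$ (paths passing through $y$) and runs the single supermartingale $L_u$ with weights $(2\lambda)^{d(z,y)+d(y,x)}$ on $\eta$ and $(2\lambda)^{d(z,x)}$ on $\eta'$, stopped when $\eta'(x)=1$; the change of weight when an $\eta$-site transmits to $y$ is absorbed into the drift, so \emph{every} later reinfection of $y$ is accounted for and the constant is exactly $L_0=(2\lambda)^{2d(x,y)}$ (resp.\ $(t+1)(2\lambda)^{2d(x,y)}$). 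Your proposed patch -- ``the same Poisson bookkeeping as in (iv.)'', i.e.\ a Mecke sum over all transmission arrows into $y$ that start a new infection interval -- can be made rigorous, but it needs the time-decay bound $P(\xi^x_u(w)=1)\le e^{-u/4}(2\lambda)^{d(x,w)}$ integrated in time and a sum over the up to $\tfrac1{8\lambda^2}$ neighbours of $y$, and it produces a bound of the form $C(2\lambda)^{2d(x,y)}$ with $C>1$ (the geodesic neighbour of $y$ alone already contributes $2(2\lambda)^{2d(x,y)}$), not the constants stated in the lemma. So as written the proof of (v.)--(vi.) is missing its main step, and the sketched repair proves a weaker (though, for the paper's applications, still sufficient) statement.
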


\begin{proof} 
\noindent $(i.)$ For $u > 0$, let $M_u = \sum_{z \in T}\; \xi^x_u(z) \cdot (2\lambda)^{d(z,y)}$. We claim that $(M_u)_{u \geq 0}$ is a supermartingale. To check this, notice that, for fixed $u \geq 0$ and $\xi \in \{0,1\}^T$,
\begin{eqnarray}\nonumber&&\frac{d}{dr} E_{T}^\lambda\left(M_{u+r} \;|\; \xi_u = \xi\right)\vert_{r=0+} = \sum_{\substack{z \in T:\;\xi(z) = 1}} \left(\left(\lambda \cdot \sum_{\substack{w: w \sim z,\;\xi(w) = 0}} (2\lambda)^{d(w,y)}\right) - (2\lambda)^{d(z,y)}\right)\\
&&\qquad\qquad\nonumber\leq \sum_{\substack{z \in T:\;\xi(z) = 1}} \left(2^{d(z,y)-1} \cdot \lambda^{d(z,y)} + \frac{1}{8\lambda^2} \cdot 2^{d(z,y) + 1} \cdot \lambda^{d(z,y) + 2} - (2\lambda)^{d(z,y)}\right)\\
&&\nonumber \qquad\qquad\leq \sum_{\substack{z \in T:\;\xi(z)=1}} \lambda^{d(z,y)}\left(2^{d(z,y) -1} + 2^{d(z,y)-2} - 2^{d(z,y)}\right) \\
&&\qquad \qquad= -\frac{1}{4}  \sum_{z \in T} \xi(z) \cdot (2\lambda)^{d(x,y)}.\label{eq:14mart}\end{eqnarray}

Then, if $0 \leq s < u$,
$$\begin{aligned}&\frac{d}{dr}E_{T}^\lambda\left(M_{u+r}\;|\; \xi_{s'}: 0 \leq s' \leq s\right)\big \vert_{r=0+} \\&\quad= \sum_\xi \frac{d}{dr}E_{T}^\lambda\left(M_{u+r}\;|\; \xi_u = \xi\right)\vert_{r=0+} \cdot P_{T}^\lambda\left(\xi_u = \xi\;|\; \xi_{s'}:0 \leq s' \leq s\right) < 0.\end{aligned}$$
In addition, the function $u \in [s,\infty) \mapsto E_{T}^\lambda\left(M_u\;|\;\xi_{s'}: 0 \leq s'\leq s\right)$ is continuous. Consequently, it is decreasing, so $E_{T}^\lambda(M_u\;|\; \xi_{s'}:0\leq s'\leq s) \leq M_s.$

Now, let $\tau = \inf\{u > 0: \xi^x_u(y) = 1\}$. By the optional sampling theorem (which may be applied since $M$ is a c\`adl\`ag supermartingale), we get
$$\begin{aligned}P_{T}^\lambda((x,0) \lra \{y\} \times \R_+) &= P_{T}^\lambda(\tau < \infty) \\&\leq E_{T}^\lambda(M_\tau;\; \tau < \infty) \leq E_{T}^\lambda(M_0) = (2\lambda)^{d(x,y)}.\end{aligned}$$

\noindent $(ii.)$ Since by (\ref{eq:14mart}) for any $u$ we have $$\frac{d}{dr} E_{T}^\lambda\left(M_{u+r}\;|\; \xi_u\right)\vert_{r=0+} \leq -\frac{1}{4}M_u,$$ the process $\tilde M_u = e^{u/4} \cdot M_u$ is a supermartingale. Now define $\sigma_t = \inf\{u \geq t: \xi^x_u(y) = 1\}$. The optimal sampling theorem gives
$$\begin{aligned}P_{T}^\lambda\left((x,0) \;\lra\; \{y\}\times [t,\infty)\right) &\leq e^{-t/4}\cdot E_{T}^\lambda\left(\tilde M_{\sigma_t}\cdot I_{\{\sigma_t < \infty\}}\right) \\&\leq e^{-t/4}\cdot E_{T}^\lambda\left(\tilde M_0\right) = e^{-t/4}\cdot (2\lambda)^{d(x,y)},\end{aligned}$$
completing the proof.\medskip

\noindent $(iii.)$ Again applying the optimal sampling theorem to the supermartingale $(\tilde M_u)$ defined above, we get
\begin{equation} \label{eqnxry}P_{T}^\lambda\left(\xi^x_u(y) = 1\right) \leq e^{-u/4} \cdot (2\lambda)^{d(x,y)} \qquad \forall u. \end{equation}
Applying (\ref{eqnxry}) and the fact that $\lambda < 1/2$,
$$P_{T}^\lambda\left(\xi^T_t \neq \emptyset\right) \leq \sum_{x,y\in T}\; P_{T}^\lambda\left(\xi^x_t(y) = 1\right) \leq |T|^2\cdot e^{-t/4}.$$
\noindent $(iv.)$ For $u > 0$ and $z \in T$, define $\zeta_u(z) = I_{\{\{x\} \times [0, t] \;\lra\; (z,u)\}}$. $(\zeta_u)_{u \geq 0}$ is thus a process that evolves as $(\xi_u)_{u \geq 0}$, with the difference that site $x$ is ``artificially'' kept at state 1 until time $t$. Next, define for $u > 0$
$$N_u = \max(t+1-u,\; 1) \cdot \zeta_u(x) \cdot (2\lambda)^{d(x,y)} + \sum_{z \neq x} \zeta_u(z)\cdot (2\lambda)^{d(z,y)}.$$
We claim that $(N_u)_{u \geq 0}$ is a supermartingale. As in the previous parts, this is proved from
\begin{equation}\label{eqnCad} \frac{d}{dr}E_{T}^\lambda\left(N_{u+r}\;|\;\zeta_u\right)\vert_{r=0+} < -\frac{1}{4}N_u < 0. \end{equation}
In case $u \geq t$, (\ref{eqnCad}) is proved exactly as in the first computation in the proof of part $(i.)$. In case $u < t$, we note that
$$\begin{aligned}
&\frac{d}{dr} E_{T}^\lambda\left((t+1-u-r)\cdot \zeta_{u+r}(x) \cdot (2\lambda)^{d(x,y)}\;|\; \zeta_u\right)\big|_{r=0+} = -(2\lambda)^{d(x,y)},
\end{aligned}$$
so the same computation can again be employed and (\ref{eqnCad}) follows. The result is now obtained from the optional sampling theorem and the fact that $N_0 \equiv t+ 1$. \medskip

\noindent $(v.)$ The proofs of $(v.)$ and $(vi.)$ are similar but $(v.)$ is easier, so we only present $(vi.)$.\medskip

\noindent $(vi.)$ For $u \geq 0$ and $z \in T$, define
$$\begin{aligned}&\eta_u(z) = I\{\{x\} \times [0,t] \lra (z,u) \text{ by a path that does not pass by }y\};\\
&\eta'_u(z) = I\{\{x\} \times [0, t] \lra (z,u) \text{ by a path that passes by }y\}.
\end{aligned}$$
Notice that, in particular, $\eta_u(x) = 1 \; \forall u \leq t$, $\eta_u(y) = 0 \; \forall u$ and
$$\left\{\begin{array}{c}\exists t':\;\{x\}\times [0,t] \lra (y, t')\\ \text{ and } (y, t') \lra \{x\} \times [t', \infty)\end{array} \right\} = \{\exists s:\;\eta'_s(x) = 1\}.$$
Also define, for $u \geq 0$,
$$\begin{aligned}L_u = \max\left((t+1-u),\; 1\right)\cdot \eta_u(x) \cdot(2\lambda)^{2d(x,y)} &+ \sum_{\substack{z\in T:\\z \neq x}}\;\eta_u(z)\cdot(2\lambda)^{d(z,y) + d(y,x)}\\
&+ \sum_{z\in T}\; \eta'_u(z) \cdot (2\lambda)^{d(z,x)}. \end{aligned}$$
Proceeding as in the previous parts (and again treating separately the cases $u<t$ and $u \geq t$), we can show that $(L_u)_{u \geq 0}$ is a supermartingale. The result then follows from the optional sampling theorem (consider the stopping time $\inf\{s:\; \eta'_s(x) = 1\}$) and the fact that $L_0 = (t+1)(2\lambda)^{2d(x,y)}$.
\end{proof}

\begin{lemma}
\label{lem:extStar}
If $\lambda < 1/4$ and $S$ is a star,
$$P^\lambda_S\left(\xi^S_{3\log\left(\frac{1}{\lambda}\right)} = \varnothing\right) \geq \frac{1}{4}\;e^{-16\lambda^2|S|}.$$
\end{lemma}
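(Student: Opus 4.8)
\emph{Plan.} The aim is to produce an explicit event $E$, of probability at least $\tfrac14 e^{-16\lambda^2|S|}$, on which $\xi^S_T=\varnothing$ at time $T:=3\log(1/\lambda)$. Write $o$ for the hub and, for $t\ge 0$, let $\mathcal I_t$ be the set of infected leaves at time $t$. Split $[0,T]$ into a \emph{free phase} $[0,t_1]$, $t_1:=\log(1/\lambda)$, and a \emph{controlled phase} $[t_1,T]$ of length $2\log(1/\lambda)$. The idea is: let the process run freely long enough that the number of infected leaves has dropped to its metastable order $O(\lambda|S|)$; then, from a moment when the hub is healthy, force the hub never to be reinfected and the remaining infected leaves to recover.

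\emph{Free phase.} This only uses a first–moment bound: each infected leaf recovers at rate $1$ and each healthy leaf is infected at rate at most $\lambda$, so $f(t):=E^\lambda_S|\mathcal I_t|$ obeys $f'(t)\le\lambda|S|-f(t)$, whence $f(t)\le\lambda|S|+|S|e^{-t}$ and, in particular, $E^\lambda_S|\mathcal I_{t_1}|\le 2\lambda|S|$. By Markov's inequality, $|\mathcal I_{t_1}|\le C\lambda|S|$ with probability at least $1-2/C$ for an absolute constant $C$ (to be taken below $12$). Let $F$ be the $\mathcal F_{t_1}$–measurable event that $o$ is healthy at $t_1$ and $|\mathcal I_{t_1}|\le C\lambda|S|$.

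\emph{Controlled phase.} On $F$, work conditionally on $\mathcal F_{t_1}$. It suffices that for every $v\in\mathcal I_{t_1}$, writing $\rho_v$ for the first recovery of $v$ after $t_1$, one has $\rho_v\le T$ and $D_{v,o}\cap[t_1,\rho_v]=\varnothing$. Indeed, inspecting the first time $s>t_1$ at which $o$ would be reinfected: the leaf causing it is infected just before $s$, and since $o$ is healthy on $[t_1,s)$ that leaf received no infection from $o$ in $(t_1,s)$, so it was already infected at $t_1$ and is still infected at $s$, i.e. $s\in(t_1,\rho_v]$ — but then the transmission at $s$ contradicts $D_{v,o}\cap[t_1,\rho_v]=\varnothing$. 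Hence $o$ stays healthy on $[t_1,T]$, no leaf is reinfected after $t_1$, every leaf of $\mathcal I_{t_1}$ has recovered by $T$, every other leaf was already healthy at $t_1$, and so $\xi^S_T=\varnothing$. Given $\mathcal F_{t_1}$ the pairs $(\rho_v,\,D_{v,o}\cap(t_1,\infty))_{v\in\mathcal I_{t_1}}$ are independent and fresh, and for each $v$
$$P^\lambda_S\!\left(\rho_v\le T,\ D_{v,o}\cap[t_1,\rho_v]=\varnothing\ \middle|\ \mathcal F_{t_1}\right)=\frac{1-e^{-(1+\lambda)(T-t_1)}}{1+\lambda}\ \ge\ \frac{1-\lambda^2}{1+\lambda}=1-\lambda,$$
using $T-t_1=2\log(1/\lambda)$. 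Thus on $F$ the controlled phase succeeds with conditional probability at least $(1-\lambda)^{|\mathcal I_{t_1}|}\ge(1-\lambda)^{C\lambda|S|}\ge e^{-\frac{4C}{3}\lambda^2|S|}$, the last step using $-\log(1-\lambda)\le\tfrac43\lambda$ for $\lambda<\tfrac14$. Hence $P^\lambda_S(\xi^S_T=\varnothing)\ge P^\lambda_S(F)\,e^{-\frac{4C}{3}\lambda^2|S|}$, and since $C<12$ there is a genuine positive constant of room, $\tfrac{4C}{3}<16$.

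\emph{Main obstacle.} The one delicate estimate is the lower bound on $P^\lambda_S(F)$, i.e. the probability that the hub is healthy at $t_1$ while carrying only $O(\lambda|S|)$ infected leaves — essentially a statement about the metastable density of the hub of a star. When $\lambda^2|S|$ is bounded, near $t_1$ the hub is reinfected at rate $O(\lambda^2|S|)=O(1)$ and recovers at rate $1$, so $P^\lambda_S(o\text{ healthy at }t_1)\ge$ const, and combining with the Markov bound (``$o$ healthy'' and ``$|\mathcal I_{t_1}|$ small'' are both decreasing in the transmission clocks, hence positively correlated) gives $P^\lambda_S(F)\ge$ const, which closes the argument. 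When $\lambda^2|S|$ is large the hub is metastable and $P^\lambda_S(F)$ is only of order $(1+\lambda^2|S|)^{-1}$ — e.g. one can force a recovery of $o$ inside the tiny window $[t_1-\lambda,t_1]$ together with the absence of leaf$\to o$ transmissions there, at cost of order $\lambda e^{-\lambda^2|S|}$ — but this polynomial/mild-exponential loss is harmless, since $x^{-1}e^{-cx}\ge\tfrac14 e^{-16x}$ for all $x>0$ whenever $c<16$, which is exactly the slack left by taking $C<12$. Assembling the two cases and tracking the absolute constants (this is where the specific numbers $\tfrac14$ and $16$, together with the hypothesis $\lambda<\tfrac14$, come in) gives $P^\lambda_S(\xi^S_T=\varnothing)\ge\tfrac14 e^{-16\lambda^2|S|}$. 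Everything except the lower bound on $P^\lambda_S(F)$ is routine bookkeeping of the two elementary estimates above.
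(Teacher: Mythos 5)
Your two-phase structure is sound, and your controlled phase is essentially the paper's key event: each leaf infected at time $t_1$ must recover before $T$ and before transmitting to the hub, each with probability at least $1-\lambda$, giving the factor $(1-\lambda)^{C\lambda|S|}\ge e^{-\frac{4C}{3}\lambda^2|S|}$; that part is correct. The genuine gap is exactly the step you yourself flag as the ``main obstacle'': a lower bound on $P^\lambda_S(F)$, the probability that the hub is healthy at the \emph{deterministic} time $t_1$ while only $O(\lambda|S|)$ leaves are infected. This is the heart of the lemma and it is not proved. Your assertion that $P^\lambda_S(F)\gtrsim (1+\lambda^2|S|)^{-1}$ is itself a quantitative metastability statement, and the only concrete device you give (force a recovery of $o$ in $[t_1-\lambda,t_1]$ and no leaf-to-hub transmission there) produces a prefactor of order $\lambda$, not $(1+\lambda^2|S|)^{-1}$. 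A prefactor $\lambda$ is not harmless: $\lambda e^{-c'\lambda^2|S|}\ge\frac14 e^{-16\lambda^2|S|}$ with $c'<16$ forces $\lambda^2|S|\gtrsim\log(1/\lambda)$, so your bound fails whenever $\lambda^2|S|$ lies between a large constant and a small multiple of $\log(1/\lambda)$. This is not a corner case: the lemma is later applied (event $B^5_5$ in Section 6.3) to a star with $\deg(o)=M'=\lceil\epsilon_1\lambda^{-2}\log(1/\lambda)\rceil$ with $\epsilon_1$ small, where the full strength $\frac14\lambda^{16\epsilon_1}$ is needed and an extra factor $\lambda$ would destroy the comparison with the number of independent blocks of length $3\log(1/\lambda)$ in $[0,L_1]$. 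The ``bounded $\lambda^2|S|$'' case is also only gestured at: converting ``reinfection rate $O(1)$ near $t_1$'' into ``healthy at $t_1$ with an explicit constant'' requires controlling the infected-leaf count over a whole window (a supremum bound, not the fixed-time Markov bound), and the resulting constant must then be tracked against the required $\frac14$.

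The paper sidesteps this obstacle entirely, and that is the missing idea. Instead of demanding that the hub be healthy at a deterministic time, it waits for $\sigma$, the first mark of $D_o$ after $\log(1/\lambda)$: the event $\{\sigma<2\log(1/\lambda)\}$ concerns $D_o$ alone and has probability at least $1-\lambda$, so no lower bound on a metastable density is ever needed. The infected set at the random time $\sigma$ is controlled by the dominating process $\zeta^S$ in which recoveries at the hub are suppressed (the hub is permanently infected): its leaf coordinates are independent of $D_o$ (hence of $\sigma$) and of each other, and each is infected at any time $t\ge\log(1/\lambda)$ with probability at most $2\lambda$, whence $|\xi^S_\sigma|\le|\zeta^S_\sigma|\le 4\lambda|S|$ with probability at least $1/2$, uniformly in $\lambda^2|S|$. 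From time $\sigma$ on, your controlled-phase computation finishes the proof; replacing your event $F$ by this construction turns your argument into the paper's.
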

\begin{proof}
Let $(\zeta^S_{t})_{t\geq 0}$ be the process with state space $\{0,1\}^S$, starting from full occupancy, and with the same dynamics as that of contact process, with the only difference that recovery marks at the hub $o$ have no effect, so that $o$ is permanently in state 1. $(\xi^S_t)$ and $(\zeta^S_t)$ can obviously be jointly constructed with a single graphical construction, with the property that $\xi^S_t \leq \zeta^S_t$ for all $t$. Also note that the processes $\{\zeta^S_t(x): x \in S\}$ are independent and, if $x \neq o$, the function $t \mapsto P_S^\lambda(\zeta^S_t(x) = 1)$ is a solution of $f'(t) = \lambda(1-f(t)) - f(t)$, so
$$P^\lambda_S\left(\zeta^S_t(x) = 1\right) = \frac{1}{1+\lambda}\left(\lambda + e^{-(1+\lambda)t}\right).$$

Let $\sigma = \inf D_o \cap \left[\log \frac{1}{\lambda},\;\infty \right)$ be the first recovery time at the hub after time $\log \frac{1}{\lambda}$. Also define the events
$$\begin{aligned}
&B^1_1 = \left\{\sigma < 2\log \frac{1}{\lambda}\right\};\\
&B^1_2 = \left\{|\zeta^S_\sigma| \leq 4\lambda|S|\right\};\\
&B^1_3 = \left\{\begin{array}{c}\text{For all } x \in \zeta^S_\sigma,\; D_x \cap \left[\sigma,\; \sigma + \log \frac{1}{\lambda}\right] \neq \varnothing\medskip\\ \text{and } \inf\left(D_x \cap [\sigma,\;\infty)\right) < \inf\left(D_{x,o} \cap [\sigma,\;\infty)\right) \end{array}\right\}.
\end{aligned}$$
We then have $\left\{\xi^S_{3\log\frac{1}{\lambda}} = \varnothing\right\} \supset  B^1_1 \cap B^1_2 \cap B^1_3$. To see this, assume that the three events occur. By the definition of $\sigma$, we have $\xi^S_\sigma(o) = 0$ and $|\xi^S_\sigma| \leq |\zeta^S_\sigma| < 4\lambda S$ and every vertex that is infected at this time recovers without reinfecting the root by time $\sigma + \log(1/\lambda) < 3\log(1/\lambda)$.

We have
$$P_S^\lambda(B^1_1) \geq P^\lambda_S\left(D_o \cap \left[\log \frac{1}{\lambda},\;2\log \frac{1}{\lambda}\right] \neq \varnothing\right)\geq 1 - e^{-\log\frac{1}{\lambda}} = 1 -\lambda.$$
For $x \neq o,\; P_S^\lambda\left(\zeta^S_\sigma(x) = 1\right) \leq \frac{1}{1+\lambda}\left(\lambda + e^{-(1+\lambda)\log\frac{1}{\lambda}}\right) \leq 2\lambda,$ so
$$P_S^\lambda(B^1_2) \geq \P\big(\mathsf{Bin}(|S|,\;2\lambda) \leq 4\lambda|S| \big) \geq 1/2.$$
Also, for $x \neq o$, 
$$P_S^\lambda\left(\begin{array}{c}D_x \cap \left[\sigma,\; \sigma + \log \frac{1}{\lambda}\right] \neq \varnothing \text{ and}\medskip\\ \inf\left(D_x \cap [\sigma,\;\infty)\right) < \inf\left(D_{x,o} \cap [\sigma,\;\infty)\right) \end{array} \right) \geq 1 - e^{-\log\frac{1}{\lambda}}-\frac{\lambda}{1+\lambda} > 1 - 2\lambda,$$
so
$$P^\lambda_S\left(B^1_3\;|\;B^1_1 \cap B^1_2\right) \geq (1-2\lambda)^{4\lambda|S|} \geq e^{-2\cdot 2\lambda \cdot 4\lambda|S|} = e^{-16\lambda^2|S|}$$
since $1-\alpha \geq e^{-2\alpha}$ for $\alpha < 1/2$.

In conclusion,
$$P^\lambda_S\left(\xi^S_{3\log \frac{1}{\lambda}} = \varnothing\right) \geq \P^\lambda_S\left(B^1_3\;|\;B^1_1 \cap B^1_2 \right)\cdot P^\lambda_S\left(B^1_1 \cap B^1_2\right) \geq e^{-16\lambda^2|S|}\cdot\left(1-\lambda - \frac{1}{2}\right) \geq \frac{1}{4}\;e^{-16\lambda^2|S|}.$$
\end{proof}

Applying the above result and Lemma \ref{lembound}, we get a bound on the probability of extinction of the contact process on trees where, one vertex apart, degrees are bounded by $\frac{1}{8\lambda^2}$.
\begin{lemma}
\label{lem:extTree}
For $\lambda > 0$ small enough, the following holds. If $T$ is a tree with root $o,\;|T| < \frac{1}{\lambda^3}$ and $\deg(x) \leq \frac{1}{8\lambda^2}$ for all $x \neq o$, then
$$P^\lambda_T\left(\xi^T_{100\log\frac{1}{\lambda}} = \varnothing\right) \geq \frac{1}{8}\;e^{-16\lambda^2\deg(o)}.$$
\end{lemma}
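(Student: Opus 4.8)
The plan is to split the tree $T$ into the star $S = B_T(o,1)$ centered at the root together with the sub-trees hanging off the neighbours of $o$, and to use Lemma \ref{lem:extStar} to control the contact process on the star and Lemma \ref{lembound}(iii) to control the contact process on each hanging sub-tree. First I would run the process on the star $S$: by Lemma \ref{lem:extStar}, with probability at least $\tfrac14 e^{-16\lambda^2|S|} \geq \tfrac14 e^{-16\lambda^2\deg(o)}$ (using $|S| = \deg(o)+1$, and absorbing the $+1$ into the constant, or re-running the computation with $|S|$), the star process restricted to $S$ is empty by time $3\log\tfrac1\lambda$. However, the true process on $T$ can be reinfected from the sub-trees below, so this is not yet enough; I would instead argue that with the stated (large) probability there is no infection path in $T$ that, after some time, re-enters the star and keeps it alive, nor a path that keeps any hanging sub-tree alive past time $100\log\tfrac1\lambda$.

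Concretely, the key steps in order would be: (1) Let $v_1,\dots,v_{\deg(o)}$ be the neighbours of $o$ and let $T_j$ be the component of $T\setminus\{o\}$ containing $v_j$; each $T_j$ is a tree with all degrees bounded by $\tfrac1{8\lambda^2}$ (the bad vertex $o$ has been removed) and $|T_j| < \tfrac1{\lambda^3}$. (2) Apply Lemma \ref{lembound}(iii) inside each $T_j$ with $t = c\log\tfrac1\lambda$ for a suitable constant: since $|T_j|^2 < \lambda^{-6}$, the probability that the contact process on $T_j$ (started from all of $T_j$ infected) survives to time $c\log\tfrac1\lambda$ is at most $\lambda^{-6} e^{-(c/4)\log(1/\lambda)} = \lambda^{c/4 - 6}$, which is smaller than any fixed power of $\lambda$ once $c$ is large — in particular much smaller than $\tfrac18 e^{-16\lambda^2\deg(o)}$ is large, so this is a genuinely small error term even after summing over the at most $\tfrac1{\lambda^3}$ sub-trees. (3) Handle reinfection across $o$: on the event that the star process $\zeta^S$ is dead by time $3\log\tfrac1\lambda$ and that no sub-tree sustains infection past some intermediate time, any reinfection of $o$ would have to come from a sub-tree $T_j$ along a path that at time $3\log\tfrac1\lambda$ is still carrying infection in $T_j$; intersecting with the events from step (2) rules this out. (4) Combine: the good event is (star dead by $3\log\tfrac1\lambda$) $\cap$ (no $T_j$ alive past $c\log\tfrac1\lambda$) $\cap$ (no reinfection), whose probability is at least $\tfrac14 e^{-16\lambda^2\deg(o)} - \tfrac1{\lambda^3}\cdot\lambda^{c/4-6} \geq \tfrac18 e^{-16\lambda^2\deg(o)}$ for $\lambda$ small, using that $16\lambda^2\deg(o)$ could be as large as $16\lambda^2\cdot\lambda^{-3} = 16\lambda^{-1}$, so one must check $\tfrac1{\lambda^3}\lambda^{c/4-6} = o(e^{-16/\lambda})$ — this fails, so the argument needs to be organized differently.

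The main obstacle, and the point that needs care, is exactly this interplay of scales: $\deg(o)$ may be of order $\lambda^{-3}$, so $e^{-16\lambda^2\deg(o)}$ can be as small as $e^{-16/\lambda}$, which is far smaller than any power of $\lambda$; hence the sub-tree error terms from Lemma \ref{lembound}(iii), though polynomially small, are \emph{not} negligible compared to the target lower bound when $\deg(o)$ is near its maximum. The fix is to not ask the sub-trees to die on their own, but to condition on the star dying first and then run Lemma \ref{lembound}(iii) inside each $T_j$ only from time $3\log\tfrac1\lambda$ onward, with the sub-tree process started not from all infected but from its actual (small) configuration — better, to note that once $o$ is healthy at time $3\log\tfrac1\lambda$ and stays healthy, each $T_j$ evolves as an \emph{independent} contact process on a tree of degree $\leq \tfrac1{8\lambda^2}$, so one applies Lemma \ref{lembound}(iii) to each $T_j$ \emph{on top of} the event $B_1^1\cap B_1^2\cap B_1^3$ from the proof of Lemma \ref{lem:extStar} rather than in place of it; the product structure means one multiplies $\tfrac14 e^{-16\lambda^2\deg(o)}$ by $\prod_j (1 - \lambda^{c/4-6})$, and since there are at most $\deg(o)+1 \leq \lambda^{-3}$ factors each within $\lambda^{c/4-6}$ of $1$, the product is at least $\exp(-2\lambda^{-3}\lambda^{c/4-6}) = \exp(-2\lambda^{c/4-9})$, which for $c$ large enough tends to $1$ — so the combined probability is at least $\tfrac14 e^{-16\lambda^2\deg(o)}\cdot(1-o(1)) \geq \tfrac18 e^{-16\lambda^2\deg(o)}$, and choosing the final time as $3\log\tfrac1\lambda + c\log\tfrac1\lambda \leq 100\log\tfrac1\lambda$ finishes the proof. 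I would present it in this conditioned-product form from the start to avoid the scale mismatch.
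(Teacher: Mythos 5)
Your decomposition into the star around $o$ plus the hanging subtrees, with Lemma \ref{lem:extStar} for the star and Lemma \ref{lembound}$(iii.)$ for the rest, is the right starting point, and you correctly spotted the scale problem (additive polynomially small errors versus a target that can be as small as $e^{-16/\lambda}$). But there is a genuine gap: the intersection of events you propose does not imply extinction. The subtree events coming from Lemma \ref{lembound}$(iii.)$ only control infection that traces back, \emph{inside} $T_j$, to the time at which you start that subtree from full occupancy; they say nothing about infection that the hub injects into $T_j$ at positive times. During the whole star phase the hub is infected and keeps seeding the subtrees, and such an excursion can survive inside $T_j$ and later re-infect $v_j$ and then $o$; the star event of Lemma \ref{lem:extStar} does not forbid this, since its event $B^1_3$ only compares the first recovery with the first transmission mark after $\sigma$ for leaves infected at time $\sigma$ in the pure star process, while a leaf re-infected from below can transmit to $o$ at a later mark. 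Your ``fix'' is circular at exactly this point: ``once $o$ is healthy at time $3\log(1/\lambda)$ and stays healthy, each $T_j$ evolves as an independent contact process'' assumes what has to be proved, because whether $o$ stays healthy is determined by the subtree processes themselves; after conditioning on that event the subtrees are neither independent nor unconditioned, so the product bound $\tfrac14 e^{-16\lambda^2\deg(o)}\prod_j\left(1-\lambda^{c/4-6}\right)$ is not justified.

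The paper closes precisely this hole with two ingredients you do not use. First, an ``out-and-back'' event: by Lemma \ref{lembound}$(vi.)$ and a union bound over the at most $\lambda^{-3}$ vertices at distance at least $2$ from $o$, with probability $1-O(\lambda\log(1/\lambda))$ no infection path emanating from $o$ during the relevant time window reaches distance $\geq 2$ and returns to $o$; this is what makes the hub's fate depend only on the star. Second, a three-stage time decomposition with $L=\tfrac{100}{3}\log\tfrac1\lambda$: on $[0,L]$ the full-occupancy process on $T\setminus\{o\}$ dies (Lemma \ref{lembound}$(iii.)$ applied to all of $T\setminus\{o\}$ at once, so no subtree-by-subtree product is needed), on $[L,2L]$ the star started from full occupancy at time $L$ dies (Lemma \ref{lem:extStar}), and on $[2L,3L]$ whatever the hub injected into $T\setminus\{o\}$ before it turned healthy dies as well. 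All four events are decreasing with respect to the graphical construction, so the FKG inequality bounds the probability of their intersection from below by the product of their probabilities; this, rather than conditional independence, is what produces the multiplicative structure and turns the polynomially small error terms into harmless factors close to $1$, yielding $\tfrac18 e^{-16\lambda^2\deg(o)}$.
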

\begin{proof}
Let $S$ be the star graph containing $o$ and it neighbours, $T' = T\backslash \{o\}$ be the disconnected graph obtained by removing $o$ and all edges incident to it from $T$ and $L = \frac{100}{3}\log\frac{1}{\lambda}$. We introduce three basic comparison processes, all generated with the same graphical construction on $T$ that is used to define $(\xi^T_t)_{t\geq 0}$.\medskip\\
$\bullet\;\left(\xi^{T,1}_t\right)_{t \geq 0}$ is the contact process on $T'$ started from full occupancy, that is,
$$\xi^{T',1}_{t} = {\left\{x: T'\times \{0\} \;\lra\;(x,t) \text{ inside } T'\right\}};$$
$\bullet\;\left(\eta^S_t\right)_{t \geq L}$ is the contact process on $S$, beginning from full occupancy at time $L$, that is,
$$\eta^S_t = \left\{x: S \times L \;\lra\;(x,t) \text{ inside } S\right\};$$
$\bullet\;\left(\xi^{T,2}_t\right)_{t \geq 2L}$ is the contact process on $T'$ started from full occupancy at time $2L$, that is,
$$\xi^{T',2}_t = \left\{x: T' \times 2L \;\lra\; (x, t) \text{ inside } T' \right\}.$$
The event $\left\{\xi^T_{3L} = \varnothing \right\}$ contains the intersection of the following events:
$$\begin{aligned}
&B^2_1 = \left\{\xi^{T,1}_L = \varnothing \right\};\qquad B^2_2 = \left\{\eta^S_{2L} = \varnothing\right\};\qquad 
B^2_3 = \left\{\xi^{T,2}_{3L} = \varnothing \right\};\\
&B^2_4 = \left\{\nexists(x,s): d(o,x) \geq 2,\; o \times [0,\;3L] \;\lra\; (x,s) \;\lra\; \{o\}\times [s,\;\infty)\right\}.
\end{aligned}$$
Let us prove this. If $B^2_1$ occurs, then for any $s \geq L$ and $x\in T,\;\xi^T_s(x) = 1$ implies that $\{o\}\times [0,s] \;\lra\;(x,s)$. Thus, if $B^2_1 \cap B^2_4$ occurs, then for any $s \geq L,\;\eta^S_s(o) = 0$ implies $\xi^T_s(o) = 0$ so that, if $B^2_1 \cap B^2_2 \cap B^2_4$ occurs, we have $\xi^T_s(o) = 0$ for $s \geq 2L$. It then follows that $\xi^T_{3L} =\varnothing$ if all four events occur.

We now note that the four events are decreasing with respect to the partial order on graphical constructions defined by setting, for graphical constructions $H$ and $H',\; H \prec H'$ if $H'$ contains more transmissions and less recoveries than $H$. Thus, by the FKG inequality, $P^\lambda_T(\cap_{i=1}^4 B^2_i) \geq \prod_{i=1}^4 P^\lambda_T(B^2_i)$. 

By Lemma \ref{lembound}$(vi.)$, we have $P^\lambda_T(B^2_4) \geq 1 - (3L+1)\cdot(2\lambda)^4\cdot \frac{1}{\lambda^3}$. Also applying Lemma \ref{lembound}$(iii.)$ and Lemma \ref{lem:extStar}, it is then easy to verify that, for $\lambda$ small, $P^\lambda_T(B^2_1)\cdot P^\lambda_T(B^2_2) \cdot P^\lambda_T(B^2_3) > \frac{1}{2}$.
\end{proof}


\section{Proof of Proposition \ref{prop:main}: upper bounds}
\label{s:upper}

We now want to apply the estimates of the previous section in proving the upper bound of Proposition \ref{prop:main}. Since Lemma \ref{lembound} must be applied to finite trees, our first step is defining truncations of infinite trees: given the distance threshold $r$ and the size threshold $m$, vertices of  degree larger than $m$ and vertices at distance $r$ from the root will be turned into leaves.

Let $r, m \in \N$ and $T$ be a tree with root $o$. Define the $r,m$-truncated tree 
$$\overline T_{r,m} = \{o\} \cup \left\{x \in T: d(o, x) \leq r,\; \deg(y) \leq m \; \forall y \text{ in the geodesic from $o$ to $x$},\; y \notin \{o, x\}\right\}.$$
Also define, for $1 \leq i < r$,
$$S^{i}_{r,m}(T) =\left\{\begin{array}{c}x \in T: d(o,x) = i,\;\deg(x) > m,\\ \deg(y) \leq m \; \forall y \text{ in the geodesic from $o$ to $x$},\; y \notin \{o, x\}\end{array}\right\}$$
and, finally,
$$S^r_{r,m}(T) = \{x \in \overline T_{r,m}: d(o, x) = r \}.$$
We want to think of $\overline T_{r,m}$ as the result of inspecting $T$ upwards from the root until generation $r$ so that, whenever a vertex $x$ of degree larger than $m$ is found, the whole subtree that descends from it is deleted, so that $x$ becomes a leaf.

Note that, if $\T$ is a tree sampled from the probability $\Q_{p,q}$, then $\overline \T_{r,m}$ is a Galton-Watson tree of $r$ generations in which the degree distribution of the root is $p$ and that of other vertices is $\overline q_m(k)$, as in (\ref{eq:defhatq}).
In particular, using (\ref{c0a-2}) and (\ref{c03-a}), for $1 \leq i < r$ we have the upper bound
\begin{equation}
\label{eq:expSRM1}
\E_{\Q_{p,q}}(\;|S^i_{r,m}|\;) \leq \mu\cdot \left(\sum_{k=1}^m kq(k)\right)^{i-1} \cdot q(m, \infty) \leq \left\{\begin{array}{ll}C_0\mu\cdot (C_0m^{3-a})^{i-1}\cdot m^{-(a-2)} & \text{if } 2 < a < 3;\medskip\\ C_0\mu\cdot (C_0\log m)^{i-1}\cdot m^{-1} &\text{if } a = 3;\medskip\\C_0\mu\cdot \nu^{i-1}\cdot m^{-(a-2)} &\text{if } a> 3. \end{array}  \right.
\end{equation}
Similarly, for $1 \leq i \leq r$,
\begin{equation}
\label{eq:expSRM2}
\E_{\Q_{p,q}}\left(\left|\left\{\begin{array}{c}x \in \overline \T_{r,m}:\\ d(o,x) = i\end{array}\right\}\right|\right) \leq \mu \cdot \left(\sum_{k=1}^m kq(k)\right)^{i-1} \leq \left\{\begin{array}{ll}C_0\mu\cdot (C_0m^{3-a})^{i-1} &\text{if } 2 < a< 3;\\C_0\mu\cdot (C_0\log m)^{i-1}&\text{if } a=3;\\C_0\mu\cdot \nu^{i-1}&\text{if } a > 3. \end{array}\right.
\end{equation}



Throughout the following subsections, we will take degree and distance thresholds, $M$ and $R$, which will depend on $\lambda$ and $a$, as follows:
$$M = \left\{ \begin{array}{cl}\left(\frac{1}{8C_0\lambda}\right)^{\frac{1}{3-a}} &\text{if } 2 < a \leq 2\frac{1}{2};\medskip\\
\frac{1}{8\lambda^2} &\text{if } a > 2\frac{1}{2};
\end{array}\right. \qquad\qquad R = \left\{\begin{array}{cl}\big \lceil \frac{2\log 4}{3-a} \log\left(\frac{1}{\lambda}\right) \big \rceil &\text{if } 2 < a \leq 2\frac{1}{2};\medskip\\ \big \lceil {\frac{2a+1}{2a-5}} \big \rceil&\text{if } 2\frac{1}{2} < a \leq 3;\medskip \\ 2a+1 &\text{if } a > 3. \end{array} \right.$$

\subsection{Case $2 < a \leq 2\frac{1}{2}$}
\label{ss:a>2}
The treatment of this regime is very simple. We start defining the event that the root has degree above $M$,
$$B^3_1 = \left\{\deg(o) > M\right\},$$
and the event that the root has degree below $M$ and the infection reaches a leaf of the truncated tree,
$$B^3_2 = \left\{\deg(o) \leq M,\; (o,0) \lra \left(\mathop{\cup}_{i=1}^R S_{R,M}^{i} \right) \times \R_+ \text{ inside } \overline \T_{R,M}\right\}.$$
We observe that $\{\xi^o_t \neq \varnothing \; \forall t\} \subset B^3_1 \cup B^3_2$. We wish to show that both events have probability smaller than $C\rho_a(\lambda)$ for some universal constant $C$. For the first event, this is immediate:
$$\Q_{p,q}(B^3_1) \leq C_0(8\lambda)^{\frac{a-1}{3-a}} < \lambda^{\frac{1}{3-a}}$$
when $\lambda$ is small. For the second,
\begin{equation}\label{eq:sepdeg} \Q_{p,q}^\lambda(B^3_2) \leq \sum_{i=1}^R \sum_{k=1}^\infty \Q_{p,q}^\lambda \left(\begin{array}{c}(o,0) \lra S^i_{R,M} \times \R_+\medskip \\ \text{ inside } \overline \T_{R,M} \end{array} \left| \begin{array}{c}\deg(o) \leq M,\medskip\\ |S^i_{R,M}| = k \end{array} \right. \right) \cdot \Q_{p,q}(\;|S^i_{R,M}| = k\;).  \end{equation}
Since the degrees of vertices of $\overline \T_{R,M}$ are bounded by $\left(\frac{1}{8\lambda C_0}\right)^{\frac{1}{3-a}} < \frac{1}{8\lambda^2}$, Lemma \ref{lembound}$(i.)$ implies that the conditional probability inside the sum is less than $k(2\lambda)^i$. (\ref{eq:sepdeg}) is thus less than $\sum_{i=1}^R (2\lambda)^i \; \E_{\Q_{p,q}}(\;|S^i_{R,M}|\;)$. Using (\ref{eq:expSRM1}) and (\ref{eq:expSRM2}), we have
$$\begin{aligned}
&\E_{\Q_{p,q}}(\;|S^i_{R,M}|\;) \leq C_0\mu\cdot (C_0M^{3-a})^{i-1} \cdot M^{-(a-2)} \text{ for } i < R \text{ and }\\
&\E_{\Q_{p,q}}(\;|S^R_{R,M}|\;) \leq C_0\mu \cdot (C_0M^{3-a})^{R-1}.
\end{aligned}$$
Thus,
\begin{equation}\sum_{i=1}^R(2\lambda)^i\;\E_{\Q_{p,q}}(\;|S^i_{R,M}|\;) \leq C_0\mu\cdot 2\lambda\left(M^{-(a-2)}\sum_{i=1}^{R-1} (2\lambda\cdot C_0 M^{3-a})^{i-1} + (2\lambda\cdot C_0 M^{3-a})^{R-1} \right).\nonumber\end{equation}
By the definition of $M$, $2\lambda \cdot C_0M^{3-a} < \frac{1}{2}$. Using the definition of $R$, we also have $$(2\lambda \cdot C_0 M^{3-a})^{R-1} < \left(\frac{1}{2}\right)^{R-1} < \left(\frac{1}{\lambda}\right)^\frac{2-a}{3-a}.$$
In conclusion, 
$$\Q_{p,q}^\lambda(B^3_2) \leq 2 C_0 \mu \cdot \lambda\left(C_0 \lambda^{\frac{a-2}{3-a}}\sum_{i=1}^{R-1} \left(\frac{1}{2}\right)^{i-1} + \left(\frac{1}{\lambda}\right)^{\frac{2-a}{3-a}} \right) < C\lambda^{1+\frac{a-2}{3-a}} = C\lambda^\frac{1}{3-a}.$$

\subsection{Case $a > 2\frac{1}{2}$}
\label{ss:a>212}
If we merely repeated the computation of the previous subsection with the new value of the threshold $M = \frac{1}{8\lambda^2}$ and the same events, that would yield a correct upper bound, but it would not be optimal in this case. So we will need to consider more events, taking a closer look at the truncated tree and ways in which the infection can leave it.

Our first two events are similar to those of the previous subsection:
$$\begin{aligned}
&B^4_1 = \{\deg(o) > M \};\medskip\\
&B^4_2 = \left\{\deg(o) \leq M,\;(o,0)\lra \left(\mathop{\cup}_{i=2}^R S_{R,M}^{i}\right)\times \R_+\text{ inside } \overline \T_{R,M}\right\}\end{aligned}$$
The difference to the previous subsection is that $B^4_2$ only includes leaves at distance two or more from the root. We will have to treat separately the  leaves neighbouring the root. We first consider the case in which there are at least two leaves neighbouring the root and at least one of them becomes infected:
$$B^4_3 = \{\deg(o) \leq M,\; |S_{R,M}^{1}| \geq 2,\;(o,0) \;\lra\; S_{R,M}^{1} \times \R_+ \text{ inside } \overline \T_{R,M}\}.$$
Next, if the root has only one neighbour that is a leaf (that is, if $|S_{R,M}^1|=1$), then call this neighbour $o^*$. Let us distinguish two ways in which $o^*$ may receive the infection initially present from $o$. We say that $o^*$ becomes infected \textit{directly} if a transmission from $o$ to $o^*$ occurs  before the first recovery time at $o$. We say that $o^*$ becomes infected \textit{indirectly} if there are infection paths starting at $(o,0)$ and ending at $\{o^*\} \times [0,\infty)$, but all of them must visit at least one vertex different from $o$ and $o^*$. We then define
$$\begin{aligned}&B^4_4 = \{\deg(o) \leq M,\; |S_{R,M}^{1}| = 1,\;\text{$o^*$ becomes infected indirectly}\},\\[0.25cm]
&B^4_5 = \left\{\begin{array}{c}\deg(o) \leq M,\; |S_{R,M}^{1}| = 1,\;\text{there exists $t^*$ such that}\medskip\\\text{ $o^*$ becomes infected directly at time $t^*$ and}\medskip\\(o^*,t^*)\;\lra\;\T\times\{t\}\text{ for all } t\geq t^*\end{array}\right\}.
\end{aligned}$$
Thus, in event $B^4_5$, there is a transmission from $o$ to $o^*$ at some time $t^*$ before the first recovery at $o$, and the infection generated from this transmission then survives for all times in the (non-truncated) tree $\T$.

The reason we make the distinction between $o^*$ becoming infected directly or indirectly is subtle; let us explain it. In our treatment of $B^4_5$, we will re-root the tree at $o^*$ and study the distribution of this re-rooted tree, so that we can find estimates for the infection that is transmitted from $(o^*, t^*)$. This study will be possible because, when we are told that a direct transmission has occurred, we only obtain information concerning the recovery process $D_o$ and the transmission process $D_{o,o^*}$, so the distribution of the degrees of other vertices in the tree is unaffected. In contrast, if the transmission is indirect, we have information concerning the portion of the tree that descends from $o$ through vertices different from $o^*$, so the study of the re-rooted tree is compromised and we have to follow a different approach.

We now have $\{\xi^o_t \neq \varnothing \; \forall t\} \subset \cup_{i=1}^5 B^4_i$. We wish to show that $\Q_{p,q}^\lambda (B^4_i) < C\rho_a(\lambda)$ for each $i$.\medskip

\noindent\textbf{1) Event $B^4_1$.} As in the previous section, we have $$\Q_{p,q}^\lambda(B^4_1) \leq C_0M^{-(a-1)} \leq 2C_0\cdot (8\lambda)^{2(a-1)} < \rho_a(\lambda)$$ when $\lambda$ is small.\medskip

\noindent \textbf{2) Event $B^4_2$.} As in our treatment of $B^3_2$ in the previous section, we have
\begin{equation}\Q_{p,q}^\lambda(B^4_2) \leq \sum_{i=2}^R (2\lambda)^i \cdot \E_{\Q_{p,q}}(\;|S^i_{R,M}|\;) = 2\lambda \sum_{i=2}^R (2\lambda)^{i-1} \cdot \E_{\Q_{p,q}}(\;|S^i_{R,M}|\;).\label{eq:3casesExp0}\end{equation}
Using (\ref{eq:expSRM1}), for $2 \leq i < R$, we have 
\begin{equation}\nonumber (2\lambda)^{i-1}\cdot \E_{\Q_{p,q}}(\;|S^i_{R,M}|\;)\label{eq:3casesExp}\leq \left\{\begin{array}{ll}CM^{-(a-2)}\cdot (C'\lambda M^{3-a})^{i-1} & \\\leq C\lambda^{2a-4}\cdot(C'\lambda^{2a-5})^{i-1}  &\text{if } 2\frac{1}{2} < a < 3;\medskip\\ CM^{-(a-2)}\cdot(C'\lambda \log M)^{i-1}&\\\leq C\lambda^{2a-4}\cdot(C'\lambda \log \frac{1}{\lambda})^{i-1}&\text{if } a = 3;\medskip\\CM^{-(a-2)}\cdot\nu^{i-1}&\\\leq C\lambda^{2a-4}\cdot \nu^{i-1}&\text{if } a > 3. \end{array}\right.
\end{equation}
We then have
\begin{equation}\nonumber \sum_{i=2}^{R-1} (2\lambda)^{i-1}\cdot \E_{\Q_{p,q}}(\;|S^{i}_{R,M}|\;) \leq \left\{\begin{array}{ll}C\lambda^{2a-4} \cdot \lambda^{2a-5}\cdot \sum_{i=2}^\infty (C'\lambda^{2a-5})^{i-2}&\text{if } 2\frac{1}{2} < a < 3;\medskip\\ C\lambda^{2a-4}\cdot \lambda \log\frac{1}{\lambda}\cdot \sum_{i=2}^\infty (C'\lambda\log\frac{1}{\lambda})^{i-2}&\text{if } a = 3;\medskip\\ C\lambda^{2a-4}\cdot \lambda \cdot \sum_{i=2}^\infty (C'\lambda)^{i-2}&\text{if } a > 3, \end{array} \right.  \end{equation}
for constants $C, C'$ that do not depend on $\lambda$. Then,
\begin{equation}\label{eq:3casesExp2}
\sum_{i=2}^{R-1} (2\lambda)^{i-1}\cdot \E_{\Q_{p,q}}(\;|S^{i}_{R,M}|\;) \leq \lambda^{2a - 4 + \delta}
\end{equation}
when $\lambda$ is small enough, for some $\delta > 0$ that depends on $a$ but not on $\lambda$.

For $i = R$, using (\ref{eq:expSRM2}) we get
\begin{equation}\nonumber
(2\lambda)^{R-1}\cdot \E_{\Q_{p,q}}(\;|S^{R}_{R,M}|\;) \leq \left\{\begin{array}{ll}C\left(C'\lambda^{2a-5}\right)^{R-1}&\text{if } 2\frac{1}{2} < a < 3; \medskip\\ C\left(C' \lambda \log \frac{1}{\lambda}\right)^{R-1} &\text{if } a = 3;\medskip\\ C(C' \lambda)^{R-1} &\text{if } a > 3. \end{array} \right.
\end{equation}
By the choice of $R$ in each case, when $\lambda$ is small we get
\begin{equation}
\label{eq:3casesExp3} 
(2\lambda)^{R-1}\cdot \E_{\Q_{p,q}}(\;|S^{R}_{R,M}|\;) \leq \lambda^{2a}.
\end{equation}
Using (\ref{eq:3casesExp2}) and (\ref{eq:3casesExp3}) in (\ref{eq:3casesExp0}), we conclude that, if $\lambda$ is small,
$$\Q_{p,q}^\lambda(B^4_2) \leq C\lambda^{2a-3+\delta} < \rho_a(\lambda).$$

\noindent \textbf{3) Event $B^4_3$.} We bound
\begin{equation}\Q_{p,q}^\lambda(B^4_3)  \leq \sum_{k=3}^\infty p(k) \cdot 2\lambda \cdot \E_{\Q_{p,q}}\left(\;|S^1_{R,M}| \cdot I_{\{|S^1_{R,M}| \geq 2\}} \; \big| \deg(o) = k\;\right)\label{eq:ests1rm}.\end{equation}
Under $\Q_{p,q}(\;\cdot |\deg(o) = k\;)$, $|S^1_{R,M}|$ is $\mathsf{Bin}(k, q(M,\infty))$. If $X \sim \mathsf{Bin}(n,p)$, then
\begin{equation}\label{eq:binn2}\E(X\cdot I_{\{X\geq 2\}}) = np - np(1-p)^{n-1} =np(1-(1-p)^{n-1}) < (np)^2,\end{equation}
since, by Bernoulli's Inequality, $(1-p)^{n-1} > 1-(n-1)p >1 -np$. Using the bound (\ref{eq:binn2}) for $k \leq M$ and the bound $\E(X \cdot I_{\{X\geq 2\}}) < np$ for $k > M$, (\ref{eq:ests1rm}) is less than
$$\begin{aligned}
&2\lambda\left( \sum_{k=3}^ M p(k)\cdot k^2\cdot q(M,\infty)^2 + \sum_{k=M+1}^\infty p(k)\cdot k\cdot q(M,\infty)\right)\\
&\leq C\lambda \left( M^{-2(a-2)}\cdot\sum_{k=3}^M p(k)\cdot k^2 + M^{-(a-2)}\cdot\sum_{k=M+1}^\infty p(k)\cdot k\right)\\
&\leq C\lambda \left(M^{-2(a-2)}\cdot M^{3-a} + M^{-(a-2)}\cdot M^{-(a-2)} \right)\\&\leq C\lambda \left(M^{-3a+7} + M^{-2a+4} \right) \leq C\lambda(\lambda^{6a - 14} + \lambda^{4a - 8}) < \rho_a(\lambda)
\end{aligned}$$
when $\lambda$ is small, since $6a-13, 4a-7 > 2a-3$ when $a > 2\frac{1}{2}$.\medskip\\

In order to bound the probabilities of $B^4_4$ and $B^4_5$, we will need the following result, whose proof is omitted.
\begin{lemma}\label{lem:compTrees0}
The degrees of the vertices of $\T$ under $\Q_{p,q}(\;\cdot\;\big|\;\deg(o) \leq M,\;|S_{R,M}^{1}| = 1\;)$ are distributed as follows:\\
$(i.)$\ First, $\deg(o)$ is chosen with distribution
\begin{flalign}\label{eq:lawTrunc}&k \in [0,M] \mapsto \frac{\frac{p(k)}{p[0,M]}\cdot\Q_{p,q}(|S^1_{R,M}|=1 \;\big|\; \deg(o)=k)}{\sum_{w=1}^M\;\frac{p(w)}{p[0, M]}\cdot \Q_{p,q}(|S^1_{R,M}|=1 \;\big|\; \deg(o)=w)}.\end{flalign}
$(ii.)$\ Given the choice of $\deg(o)$, the degrees of $o^*$ and the remaining neighbours of $o$ are chosen independently: $\deg(o^*)$ with law
\begin{equation}\label{eq:lawTrunc1}k \in (M, \infty) \mapsto \left(q(M,\infty)\right)^{-1}q(k) \end{equation}
and the remaining degrees with law
\begin{equation}\label{eq:lawTrunc2}k \in [0,M] \mapsto \left(q[0,M]\right)^{-1}q(k). \end{equation}
$(iii.)$\ All other vertices in the tree have degrees chosen independently with distribution $q$.
\end{lemma}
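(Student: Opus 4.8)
The plan is to prove the lemma by a direct application of Bayes' rule, exploiting the fact that a Galton--Watson tree sampled from $\Q_{p,q}$ is built by assigning mutually independent degree variables to its vertices (the root receiving law $p$ and every other vertex law $q$), together with the observation that the conditioning event depends on only finitely many of these variables. First I would fix notation for the tree: write $D_o\sim p$ for the degree of the root, let $z_1,\dots,z_{D_o}$ be its neighbours (in some arbitrary but fixed order), $D_{z_1},\dots,D_{z_{D_o}}\sim q$ their degrees, and continue recursively, so that all these degree variables are mutually independent. Since $R\geq 2$, a neighbour of $o$ is a leaf of $\overline\T_{R,M}$ precisely when its degree exceeds $M$; hence $|S^1_{R,M}|$ equals the number of indices $j$ with $D_{z_j}>M$, and the conditioning event $A=\{\deg(o)\leq M,\ |S^1_{R,M}|=1\}$ is measurable with respect to the finite collection $(D_o,D_{z_1},\dots,D_{z_{D_o}})$ alone.

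The measurability of $A$ with respect to the first two generations immediately gives part $(iii.)$: given $(D_o,(D_{z_j})_j)$, the subtrees rooted at $z_1,\dots,z_{D_o}$ are independent Galton--Watson trees with degree law $q$, and conditioning further on $A$ does not touch this conditional law; in particular the descendants of $o^*$ retain law $q$ even though the degree of $o^*$ itself is constrained. It then remains to identify the conditional joint law of $(D_o,(D_{z_j})_j)$ given $A$. For the root degree, Bayes' rule applied within the event $\{D_o\leq M\}$ yields at once
$$\Q_{p,q}(\deg(o)=k\mid A)=\frac{\tfrac{p(k)}{p[0,M]}\,\Q_{p,q}(|S^1_{R,M}|=1\mid \deg(o)=k)}{\sum_{w=1}^M \tfrac{p(w)}{p[0,M]}\,\Q_{p,q}(|S^1_{R,M}|=1\mid \deg(o)=w)},$$
which is $(i.)$ (the terms with $w\in\{1,2\}$ vanish since $p(\{0,1,2\})=0$, so whether the sum starts at $1$ or $3$ is immaterial). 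Given $\deg(o)=k$, the degrees $D_{z_1},\dots,D_{z_k}$ are i.i.d.\ $\sim q$ conditioned on exactly one of them exceeding $M$; a one-line computation, using that $\Q_{p,q}(\text{exactly one}>M\mid\deg(o)=k)=k\,q(M,\infty)\,q[0,M]^{k-1}$, shows the conditional joint law factorizes as claimed: the index of the large neighbour is uniform on $\{1,\dots,k\}$, that neighbour's degree has law $k\mapsto q(k)/q(M,\infty)$ on $(M,\infty)$, and the remaining $k-1$ degrees are i.i.d.\ with law $k\mapsto q(k)/q[0,M]$ on $[0,M]$, all independent. Since $o^*$ is by definition the unique neighbour of $o$ that is a leaf of $\overline\T_{R,M}$, i.e.\ the unique neighbour with degree $>M$, this is precisely $(ii.)$.

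There is no genuine obstacle here; the proof is essentially bookkeeping. The only points deserving a moment's care are the passage between an ordered and an unordered list of neighbours (which is harmless because $o^*$ is defined intrinsically as the large neighbour) and the verification that $A$ really is a function of the root degree and the first-generation degrees only, so that the conditioning leaves everything deeper in the tree undisturbed. For these reasons the detailed verification is routine and we omit it.
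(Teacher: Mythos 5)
Your proof is correct, and it is exactly the routine verification the paper has in mind when it states the lemma "whose proof is omitted": the key points — that $S^1_{R,M}$ is precisely the set of neighbours of $o$ of degree exceeding $M$ (so the conditioning event is a function of $\deg(o)$ and the first-generation degrees only), Bayes' rule for $(i.)$, and the factorization of i.i.d.\ $q$-degrees conditioned on exactly one exceeding $M$ for $(ii.)$ and $(iii.)$ — are all in place, and your identity $\Q_{p,q}(|S^1_{R,M}|=1\mid\deg(o)=k)=k\,q(M,\infty)\,q[0,M]^{k-1}$ is the same computation the paper itself uses in the Remark following the lemma. No gaps; nothing further is needed.
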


\noindent \textbf{Remark.} The distribution in (\ref{eq:lawTrunc}) is equal to
\begin{eqnarray}&&k \mapsto \frac{p(k) \cdot k \cdot q(M,\infty)\cdot \left(q[0,M]\right)^{k-1}}{\sum_{w=1}^M p(w) \cdot w \cdot q(M, \infty) \cdot \left(q[0,M] \right)^{w-1}}\nonumber\\&&\qquad \qquad=\left(\sum_{w=1}^M p(w)\cdot w \cdot \left(q[0,M]\right)^{w-1}\right)^{-1} p(k)\cdot k\cdot \left(q[0,M]\right)^{k-1};\nonumber\end{eqnarray}
hence, it is stochastically dominated by $q$. Obviously, the distribution in (\ref{eq:lawTrunc2}) is also dominated by $q$.

\noindent \textbf{4) Event $B^4_4$.} 
\begin{equation} \Q_{p,q}^\lambda(B^4_4) \leq \Q_{p,q}(|S^1_{R,M}|>0) \cdot \Q_{p,q}^\lambda\left(\begin{array}{c}\exists y \in \overline \T_{R,M},\; 0<s<t:\\(o,0) \lra (y,s) \lra (o,t) \text{ inside } \overline \T_{R,M} \end{array}\left| \begin{array}{c}\deg(o) \leq M,\\|S^1_{R,M}| = 1 \end{array}\right.\right).\nonumber\end{equation}
The first probability on the right-hand side is less than $\sum_{k=3}^\infty p(k)\cdot k \cdot q(M,\infty) \leq C\lambda^{2(a-2)}$. By Lemma \ref{lembound}$(v.)$, the second probability is less than
\begin{equation}\label{eq:B24red}\sum_{i=1}^R \lambda^{2i} \cdot \E_{\Q_{p,q}}\left(|\{x \in \overline \T_{R,M}: d(o,x) = i\}|\;\big|\; \deg(o) \leq M,\; |S^1_{R,M}| = 1 \right)  \end{equation}
By Lemma \ref{lem:compTrees0} and the remark that follows it, this conditional expectation is bounded by 
$$\E_{\Q_q}\left(\left|\left\{x \in \overline \T_{R,M}:d(o,x) = i\right\} \right|\right) \leq \left\{\begin{array}{ll}(C_0M^ {3-a})^{i}&\text{if } 2\frac{1}{2} < a < 3;\medskip\\(C_0\log M)^i &\text{if } a = 3;\medskip\\ \nu^i &\text{if } a > 3. \end{array}\right.$$
It is then easy to check that the sum in (\ref{eq:B24red}) is less than $\lambda^{1+\delta}$ for some $\delta > 0$. In conclusion, $\Q_{p,q}^\lambda(B^4_4) \leq \lambda^{2(a-2)+1+\delta} < \rho_a(\lambda)$ when $\lambda$ is small.\medskip\\

\noindent \textbf{5) Event $B^4_5$.}
This is the bound that requires most effort. We start with
\begin{flalign}\nonumber&\Q_{p,q}^\lambda(B^4_5)& \\&\leq C\lambda^{2(a-2)} \cdot \frac{\lambda}{1+\lambda} \cdot \Q_{p,q}^\lambda\left(\;(o^*, t^*) \;\lra \;\T\times [t, \infty) \; \forall t > t^*\;\left|\;\deg(o) \leq M,\; |S_{R,M}^{1}| = 1,\; t^* < \inf D_o\right.\right)&\nonumber\\
\label{eq:B25red}&=C\lambda^{2(a-2)} \cdot \frac{\lambda}{1+\lambda} \cdot \Q_{p,q}^\lambda\left(\;(o^*, 0) \;\lra \;\T\times [t, \infty) \; \forall t > 0\;\left|\;\deg(o) \leq M,\; |S_{R,M}^{1}| = 1\right.\right).&\end{flalign}
In order to deal with the conditioning in (\ref{eq:B25red}), we need the following, which is a consequence of Lemma \ref{lem:compTrees0} and the remark that follows it.

\begin{lemma}\label{lem:compTrees}
Let $\hat \T$ be the random rooted tree obtained by\medskip \\
$\bullet$\ sampling $\T$ under law $\Q_{p,q}(\;\cdot\;|\deg(o)\leq M,\; |S^1_{R,M}|= 1)$;\medskip\\
$\bullet$ repositioning the root at $o^*$, the unique vertex in $S_{R,M}^{1}$.\medskip\\
Then, $\hat \T$ is stochastically dominated by the distribution $\Q_{q}(\;\cdot\;|\deg(o) > M)$.
\end{lemma}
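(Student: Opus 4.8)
The plan is to exhibit an explicit coupling between the tree $\hat\T$ described in Lemma~\ref{lem:compTrees} and a tree $\T'$ sampled from $\Q_q(\;\cdot\;|\deg(o)>M)$, under which every vertex of $\hat\T$ has degree at most that of the corresponding vertex of $\T'$, so that $\hat\T\subseteq\T'$ can be realized on the same probability space. First I would read off, from Lemma~\ref{lem:compTrees0} and the remark following it, the exact law of $\hat\T$ after re-rooting at $o^*$: the new root $o^*$ has degree distributed as $k\in(M,\infty)\mapsto (q(M,\infty))^{-1}q(k)$, which is precisely the law of the root of $\Q_q(\;\cdot\;|\deg(o)>M)$; and the old root $o$ — now one of the children of $o^*$ in $\hat\T$ — has degree given by \reff{eq:lawTrunc}, which by the remark is stochastically dominated by $q$ and hence (trivially) by $q$ conditioned on nothing, i.e.\ by the offspring/degree law of a non-root vertex of $\T'$. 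The remaining neighbours of $o$ have degree law \reff{eq:lawTrunc2}, again dominated by $q$, and all vertices not among $\{o,o^*,\text{children of }o\}$ already have degree law exactly $q$ in $\hat\T$.

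The key steps, in order, are: (1) Describe both trees as being grown breadth-first from their roots. (2) Couple the root degrees: since $\deg_{o^*}(\hat\T)$ and $\deg_{o^*}(\T')$ have \emph{the same} law, couple them to be equal, and match up the children of $o^*$ in the two trees bijectively, \emph{except} that in $\hat\T$ one distinguished child is the old root $o$. Pair $o$ with an arbitrary child of $o^*$ in $\T'$. (3) At $o$: its $\hat\T$-degree is stochastically dominated by $q$, which is the degree law of the corresponding non-root vertex in $\T'$, so couple so that $\deg_o(\hat\T)\le\deg_o(\T')$; then injectively map the $\hat\T$-children of $o$ into the $\T'$-children of its partner. (4) At each remaining neighbour of $o$ in $\hat\T$ (degree law \reff{eq:lawTrunc2} $\preceq q$): same domination argument, couple degrees with $\le$ and inject children. (5) At every other vertex of $\hat\T$: degree law is exactly $q$, matching $\T'$, so couple with equality. (6) Proceed inductively through the generations; at every stage the set of "used" vertices of $\T'$ is an induced subtree into which $\hat\T$ embeds, and the leftover children of $\T'$-vertices are simply ignored (they carry independent $\Q_q$-subtrees). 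This yields $\hat\T\subseteq\T'$ vertex-by-vertex, which is exactly stochastic domination in the sense the paper uses (containment of the rooted tree, hence of balls $B(o,R)$, monotone events, etc.).

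The only genuinely delicate point is step~(2)–(3): the re-rooting introduces an asymmetry, because in $\hat\T$ the vertex $o$ has a \emph{prescribed} number of neighbours of which one is $o^*$ (the new root) — so as a child of $o^*$ its "offspring count" in $\hat\T$ is $\deg_o-1$, whereas a generic non-root vertex of $\T'$ has offspring count $\deg-1$ with $\deg\sim q$. Since $\deg_o$ in $\hat\T$ is dominated by $q$, the offspring count $\deg_o-1$ is dominated by $q$ shifted by $-1$, which is the correct offspring law on the $\T'$ side; so the domination still goes through, but one must be careful to phrase everything in terms of \emph{degrees} (as the paper does) rather than offspring, and to check that the "$-1$" bookkeeping for the distinguished child is consistent on both sides. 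I expect this index-shift/re-rooting bookkeeping to be the main obstacle; once it is set up correctly, the rest is a routine generation-by-generation coupling using only that each relevant degree law is $\preceq q$, which the remark after Lemma~\ref{lem:compTrees0} already supplies. (Indeed this is presumably why the authors omitted the proof.)
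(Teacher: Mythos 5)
Your proposal is correct and is essentially the argument the paper intends: the paper deduces the lemma directly from Lemma \ref{lem:compTrees0} and the remark following it (omitting the details), and your generation-by-generation coupling — matching the root degree laws exactly, using the domination of \reff{eq:lawTrunc} and \reff{eq:lawTrunc2} by $q$ at $o$ and its remaining neighbours, and handling the $\deg-1$ offspring bookkeeping at the re-rooted vertex — is precisely the natural way to spell out that consequence.
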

As a consequence of Lemma \ref{lem:compTrees} and attractiveness of the contact process, we get
$$\begin{aligned}&\Q_{p,q}^\lambda\left((o^*, 0) \;\lra \;\T\times \{t\} \; \forall t > 0\;\left|\;\deg(o) \leq M,\; |S_{R,M}^{1}| = 1\right.\right)\leq \Q_q^\lambda\left(\xi^{o}_t \neq \varnothing \; \forall t\;|\; \deg(o) > M\right).\end{aligned}$$
Using this in (\ref{eq:B25red}), we get
\begin{equation}\Q_{p,q}^\lambda(B^4_5) \leq C\lambda^{1+2(a-2)}\cdot \Q_q^\lambda\left(\xi^{o}_t \neq \varnothing \; \forall t\;|\; \deg(o) > M\right).\label{eq:redTom0}\end{equation}
In treating the last term of (\ref{eq:redTom0}), we will obtain the logarithmic term in the definition of $\rho_a(\lambda)$. This is encompassed in the following proposition.
\begin{proposition}
There exists $C > 0$ such that
\begin{equation}\label{eq:redTom}\Q_q^\lambda\left(\xi^{o}_t \neq \varnothing \; \forall t\;|\; \deg(o) > M\right) \leq \left\{\begin{array}{ll}C\log^{-(a-2)}\left(\frac{1}{\lambda}\right) &\text{if } 2\frac{1}{2} < a \leq 3;\smallskip\\C\log^{-2(a-2)}\left(\frac{1}{\lambda}\right) &\text{if } a > 3. \end{array}\right.\end{equation}
\label{prop:redTom}\end{proposition}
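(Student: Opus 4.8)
The plan is to bound the survival probability of the contact process started from the root $o$, conditioned on $\deg(o) > M = \tfrac{1}{8\lambda^2}$, by ``peeling off'' the star around $o$ and then arguing that to survive, the infection must reach a vertex that is again of large degree and at controlled distance. First I would use the conditional law of $\T$ given $\deg(o)>M$: the root has degree $K$ with $\Q_q(\deg(o)=K\mid \deg(o)>M)\propto q(K)$ for $K>M$, and by Lemma \ref{basic}(ii.) a star of that size sustains the infection for time $e^{c_1\lambda^2 K}$ with probability at least $1-e^{-c_1\lambda^2 K}$. So the dominant contribution comes from $K$ of order $M$ (up to logarithmic corrections), and the key point is: \emph{from this star, how far can the infection spread, and can it reach another large-degree vertex?} I would invoke the truncation machinery of Section \ref{s:extestimate}: away from $o$ and its heavy neighbours, degrees below $\tfrac{1}{8\lambda^2}$ form a ``hostile environment,'' and Lemma \ref{lembound}(ii.)--(iv.) show that from a time window of length $e^{c_1\lambda^2 K}$ the infection can reach a vertex at distance $d$ only with probability $\lesssim e^{c_1\lambda^2 K}\cdot(2\lambda)^{d}$. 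For this to be non-negligible we need $d \lesssim \lambda^2 K/\log(1/\lambda)$.

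The heart of the argument is then a \emph{first-moment / renewal} estimate on how many large-degree vertices are reachable. Define the event that survival happens by the root's star reaching, within distance $R$ (the $\lambda$-dependent threshold from Section \ref{s:upper}), a vertex of degree above $M$ whose own ball carries a positive density of infected neighbours (an event of the type $\mathcal{N}(o,R,M)$); on the complement, by Lemma \ref{lembound}(iii.) applied to the truncated tree of size $<\lambda^{-3}$, the infection dies out quickly with overwhelming probability. So up to lower-order terms,
$$\Q_q^\lambda\!\left(\xi^o_t\neq\varnothing\ \forall t\ \big|\ \deg(o)>M\right)\ \lesssim\ \sum_{K>M}\frac{q(K)}{q(M,\infty)}\cdot \P\big(\text{star of size }K\text{ reaches a degree-}{>}M\text{ vertex within distance }R\big)+\text{(negligible)}.$$
To bound the inner probability I would combine: (a) the number of vertices within distance $i$ of the root's star in the truncated tree has expectation controlled by \eqref{eq:expSRM1}--\eqref{eq:expSRM2}, i.e.\ $\lesssim (C_0 M^{3-a})^{i}$ when $2\tfrac12<a<3$, $(C_0\log M)^i$ when $a=3$, $\nu^i$ when $a>3$; (b) each such vertex has degree $>M$ with probability $q(M,\infty)\asymp M^{-(a-2)}$; and (c) the probability that the infection path from the root's star actually arrives there is $\lesssim e^{c_1\lambda^2 K}(2\lambda)^{i}$ by Lemma \ref{lembound}(iv.). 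Summing over $i \le R$ and over $K$, with $M\asymp\lambda^{-2}$ so that $M^{-(a-2)}\asymp\lambda^{2a-4}$ and $\log M\asymp\log(1/\lambda)$, the geometric sums collapse and one obtains a bound of the form $\big(\text{const}\big)\cdot \log^{-(a-2)}(1/\lambda)$ in the range $2\tfrac12<a\le 3$ and $\log^{-2(a-2)}(1/\lambda)$ for $a>3$; the extra logarithmic power in the case $a>3$ comes from the fact that there $\nu<\infty$, so the branching in (a) is bounded and the ``reachّ'' of the star — of order $\lambda^2 K\asymp 1$ generations after accounting for $(2\lambda)^i$ — forces $K$ itself to be of order $\lambda^{-2}\log^2(1/\lambda)$ before another heavy vertex is found, whereas for $a\le 3$ the heavy tail of $q$ makes heavy vertices abundant at distance $\asymp\log_2 M$, costing only one logarithmic factor.

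The main obstacle I anticipate is making step (c) rigorous when the root's star has very large degree $K\gg M$: then $e^{c_1\lambda^2 K}$ is large and the infection genuinely can travel far, so the naive first-moment bound $e^{c_1\lambda^2 K}(2\lambda)^i$ is useless for the number of heavy vertices reachable. One must show that the $q$-weight $q(K)/q(M,\infty)$ of such large $K$ decays fast enough (it is $\asymp (M/K)^{a-1}$) to beat this growth — i.e.\ one needs $\sum_{K>M}(M/K)^{a-1}\,e^{c_1\lambda^2 K}\cdot(\text{reach-dependent factor})$ to still be $O(\log^{-(a-2)}(1/\lambda))$. I would handle this by splitting the sum at $K = M\log^{2}(1/\lambda)$ (roughly the ``huge'' threshold of the Introduction): below it, the reach is too short to find another heavy vertex with good probability, so a direct extinction estimate via Lemma \ref{lembound}(iii.) on the size-$\lambda^{-3}$ truncated tree applies; above it, the $q$-tail weight $(M/K)^{a-1}\le \log^{-2(a-1)}(1/\lambda)$ is already smaller than the target bound and can simply be discarded (its total mass is $\le q(M\log^2(1/\lambda),\infty)/q(M,\infty)\asymp \log^{-2(a-2)}(1/\lambda)$). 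Checking that the crossover is placed correctly in each of the three sub-cases, and that the geometric series in $i$ (whose ratio is $2\lambda C_0 M^{3-a}$, $2\lambda C_0\log M$, or $2\lambda\nu$) is genuinely summable for the chosen $R$, is the routine but delicate bookkeeping that the full proof must carry out.
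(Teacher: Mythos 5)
Your overall shape (split the root degree at a ``huge'' threshold, kill the below-threshold part by extinction estimates, pay the tail mass above) is the same as the paper's, but two of your concrete steps fail. First, the crossover and the source of the logarithms. The paper takes $M'\asymp\lambda^{-2}\log(1/\lambda)$ when $2\frac{1}{2}<a\leq 3$ and $M'\asymp\lambda^{-2}\log^{2}(1/\lambda)$ when $a>3$, bounds the contribution of $\deg(o)>M'$ simply by the conditional tail $q[M',\infty)/q[M,\infty)$, and this ratio \emph{is} the claimed bound $\log^{-(a-2)}(1/\lambda)$, resp.\ $\log^{-2(a-2)}(1/\lambda)$ --- it is the dominant term, not a discardable remainder. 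Your version places the split at $M\log^{2}(1/\lambda)$ in all cases and asserts that below it ``the reach is too short'' so extinction estimates apply. For $2\frac{1}{2}<a\leq 3$ this is false: a root of degree, say, of order $\lambda^{-2}\log^{3/2}(1/\lambda)$ survives with probability bounded away from $0$ (this is exactly the lower-bound mechanism of Subsection \ref{ss:lower}, via Lemma \ref{lem:auxFinal}: such a root with probability $\Theta(1)$ finds still larger degrees within distance $2$ and bootstraps). Relatedly, your claim that the geometric/first-moment sums ``collapse'' to give $\log^{-(a-2)}(1/\lambda)$ is not a real mechanism: with $K\asymp M$ those sums produce powers of $\lambda$, and no logarithm can come out of them; the logarithmic factors in the statement come only from the degree tail above the correctly placed threshold. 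The paper's reduction is also simpler than your weighted sum over $K$ with $e^{c_1\lambda^2K}$ factors: by monotonicity in the root degree one reduces everything below $M'$ to the single value $\deg(o)=M'$ and must then show $\Q_q^\lambda(\xi^o_t\neq\varnothing\ \forall t\mid\deg(o)=M')<\lambda^{\delta}$ (Lemma \ref{lem:redTom}).

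Second, the extinction estimate itself. You propose to conclude extinction below the threshold from Lemma \ref{lembound}$(iii.)$ on a truncated tree of size $<\lambda^{-3}$, but that lemma requires \emph{all} degrees to be at most $\frac{1}{8\lambda^2}$, and the root violates this by construction ($\deg(o)>M=\frac{1}{8\lambda^2}$, indeed of order $\lambda^{-2}\log(1/\lambda)$ or more). The root's star alone keeps the infection alive for a time of order $e^{c\lambda^{2}\deg(o)}$, so the $e^{-t/4}$ decay of Lemma \ref{lembound}$(iii.)$ is simply unavailable. This is precisely why the paper needs the separate star extinction bound (Lemma \ref{lem:extStar}) and its tree version with one exceptional high-degree vertex (Lemma \ref{lem:extTree}), combined with the time-slicing into independent windows of length $3\log(1/\lambda)$, resp.\ $100\log(1/\lambda)$ (events $B^5_5$ and $B^6_4$), a finite time-horizon truncation $L_1$, resp.\ $L_2$, handled via Lemma \ref{lembound}$(iv.)$ and $(vi.)$ (events $B^5_3$, $B^5_4$, $B^6_3$), and first-moment control of the environment (events $B^5_1$, $B^5_2$, $B^6_1$, $B^6_2$). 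The tuning of $M'$ against the window lengths is exactly what makes the extinction probability per window, $\asymp e^{-16\lambda^{2}M'}$, a small power of $\lambda$ that can be overcome by the many windows; your sketch assumes this whole step rather than proving it, and as written it cannot be patched by citing Lemma \ref{lembound}$(iii.)$ alone.
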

Define 
$$M' = \left\{\begin{array}{ll}\lceil \epsilon_1 \frac{1}{\lambda^2}\log\left(\frac{1}{\lambda}\right)\rceil &\text{if } 2\frac{1}{2} < a \leq 3;\medskip\\\lceil \epsilon_2 \frac{1}{\lambda^2}\log^2\left(\frac{1}{\lambda}\right)\rceil &\text{if } a > 3, \end{array}\right.$$
where $\epsilon_1,\; \epsilon_2$ are constants to be chosen later, depending on $a$ but not on $\lambda$. Our approach to prove Proposition \ref{prop:redTom} starts with the following:
$$\begin{aligned}
&\Q_q^\lambda\left(\xi^{o}_t \neq \varnothing \; \forall t \;|\; \deg(o) > M\right) \\&\quad= \sum_{m = \lceil M\rceil}^\infty \Q_q^\lambda\left(\xi^{o}_t \neq \varnothing \; \forall t \;|\; \deg(o) = m\right)\cdot \Q_q\left(\deg(o) = m\;|\; \deg(o) > M\right)\\
&\quad \leq  \Q_q^\lambda\left(\xi^{o}_t \neq \varnothing \; \forall t \;|\; \deg(o) =  M'\right)+\Q_q\left(\deg(o) > M'\;|\: \deg(o)> M\right) \\
&\quad \leq \Q_q^\lambda\left(\xi^{o}_t \neq \varnothing \; \forall t \;|\; \deg(o) = M'\right) + \frac{q[M', \infty)}{q[M, \infty)}.
\end{aligned}$$
Now, by (\ref{c0a-2}), the term $\frac{q[M', \infty)}{q[M, \infty)}$ is bounded from above by the expression in the right-hand side of (\ref{eq:redTom}), for some $C > 0$. Proposition \ref{prop:redTom} will thus follow from
\begin{lemma}\label{lem:redTom}
If $a > 2\frac{1}{2}$, then there exists $\delta > 0$ such that
$$\Q_q^\lambda\left(\xi^{o}_t \neq \varnothing \;\forall t\;|\: \deg(o) = M'\right) < \lambda^\delta.$$
\end{lemma}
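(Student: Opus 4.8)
The plan is to reduce eternal survival to two ``escape'' events and bound each. Fix a large integer $R''$: a constant depending only on $a$ when $2\tfrac12<a\le3$, and $R''=\lfloor\beta\log\tfrac1\lambda\rfloor$ for a small constant $\beta=\beta(a,\nu)$ when $a>3$; the constants $\epsilon_1,\epsilon_2$ in the definition of $M'$ will be taken small at the end. Working with the truncation $\overline\T_{R'',\,1/(8\lambda^2)}$, all of whose vertices except the root have degree at most $\tfrac1{8\lambda^2}$, one has the inclusion: if $\xi^o_t\neq\varnothing$ for all $t$, then either the infection reaches distance $R''$ from $o$, or at some time a fraction larger than $\tfrac{\min(\lambda,\lambda_0)}{16e}$ of the neighbours of some vertex $y\neq o$ with $\deg(y)>\tfrac1{8\lambda^2}$ is infected (an event of the type $\mathcal N$ of \reff{eq:defN} occurring at $y$). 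Indeed, if neither happens the infection never leaves, and is never amplified inside, a finite tree with root of large degree and all other vertices of degree at most $\tfrac1{8\lambda^2}$, so it is dominated by a contact process that dies out a.s.\ by the FKG/comparison argument used in the proof of Lemma~\ref{lem:extTree}. Also, via Lemma~\ref{lem:extStar} iterated, the contact process confined to the star around $o$ dies out by a time $T_0=e^{16\lambda^2 M'}\cdot\mathrm{polylog}(1/\lambda)$ with probability at least $1-\lambda^{10}$; note $T_0=\lambda^{-C\epsilon_1}\mathrm{polylog}(1/\lambda)$ is a small power of $\tfrac1\lambda$ when $2\tfrac12<a\le3$, while $T_0=e^{16\epsilon_2\log^2(1/\lambda)}$ when $a>3$.

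For the first escape event, reaching a fixed vertex at distance $R''$ requires the infection to leave the star around $o$ (at most $\mathrm{Poisson}(\lambda T_0)$ times, roughly) and then traverse a length-$(R''{-}1)$ path in a bounded-degree subtree, which by the supermartingale of Lemma~\ref{lembound}$(i.)$ succeeds with probability at most $(2\lambda)^{R''-1}$ per attempt. Summing over the distance-$R''$ leaves of $\overline\T_{R'',\,1/(8\lambda^2)}$ and using \reff{eq:expSRM2} and \reff{c03-a}, this probability is at most $C\,T_0\,(2\lambda)^{R''}\bigl(\sum_{k\le 1/(8\lambda^2)}kq(k)\bigr)^{R''-1}$; since $2\lambda\cdot\sum_{k\le 1/(8\lambda^2)}kq(k)=O(\lambda^{2a-5})=o(1)$, this is at most $\lambda^{\delta}$ once $R''$ is large enough (when $a>3$ one chooses $\beta$ so that the factor $(2\lambda)^{R''}\approx\lambda^{\beta\log(1/\lambda)}$ beats $T_0=\lambda^{-16\epsilon_2\log(1/\lambda)}$, which is possible provided $\beta>16\epsilon_2$).

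The crucial estimate is for the second escape event, whose probability I bound by $\sum_{i\ge1}\sum_{y:\,d(o,y)=i,\,\deg(y)>1/(8\lambda^2)}\Q_q^\lambda(\text{infection reaches }y\mid\deg(o)=M')$. When $2\tfrac12<a\le3$: reaching $y$ at distance $i$ costs at most $C\,T_0\,(2\lambda)^i$ as above, and by \reff{c0a-2} and \reff{eq:expSRM1} the expected number of vertices of degree $>\tfrac1{8\lambda^2}$ at distance $i$ from $o$, given $\deg(o)=M'$, is of order $\lambda^{2(a-2)}\bigl(\sum_{k\le 1/(8\lambda^2)}kq(k)\bigr)^{i-1}$, whose $i=1$ term $M'\lambda^{2(a-2)}\asymp\lambda^{2a-6}\log\tfrac1\lambda$ dominates; hence the double sum is at most $C\,T_0\,\lambda^{2a-5}\,\mathrm{polylog}(1/\lambda)=C\,\lambda^{2a-5-C\epsilon_1}\,\mathrm{polylog}(1/\lambda)\le\lambda^{\delta}$ once $\epsilon_1$ is small, because $2a-5>0$. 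When $a>3$: $T_0$ is super-polynomial, so $T_0(2\lambda)^i$ is useless for small $i$; instead one notes that the infection around $o$ can only reach distance of order $\log T_0/\log\tfrac1\lambda\asymp\epsilon_2\log\tfrac1\lambda$, so with $R''\asymp\beta\log\tfrac1\lambda$ and $\beta$ small it suffices to bound the probability that there is \emph{any} vertex of degree $>\tfrac1{8\lambda^2}$ within distance $R''$ of $o$, which by \reff{c0a-2} and \reff{eq:expSRM1} (using $\nu<\infty$) is at most $C\,M'\,\lambda^{2(a-2)}\,(\nu-1)^{R''}=C\,\lambda^{2a-6-\beta\log(\nu-1)}\,\mathrm{polylog}(1/\lambda)\le\lambda^{\delta}$, provided $\beta\log(\nu-1)<2(a-3)$ — which is arranged by taking $\epsilon_2$, and hence $\beta$, small enough depending on $a$ and $\nu$ (equivalently on $p$).

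The step I expect to require the most care is the rigorous justification of the opening set inclusion and of the lifetime bound $T_0$: both compare the contact process on the infinite tree $\T$ with contact processes on finite or pruned subgraphs. The difficulties are the self-reference in the second escape event (whether a vertex of degree $>\tfrac1{8\lambda^2}$ is ``amplifying'' depends on how much of its own neighbourhood is infected, which is what one is trying to control) and the fact that the infection can leave the region around $o$, wander, and return, so ``time $o$ is infected'' is not simply the lifetime of the star around $o$. This should be dealt with by removing, or freezing to the healthy state, all vertices of degree above $\tfrac1{8\lambda^2}$, running the comparison on the resulting bounded-degree forest together with the FKG inequality as in Lemma~\ref{lem:extTree}, and revealing the randomness of $\T$ in the order root, neighbours, further descendants, so that each subtree distribution is undisturbed by conditioning on $\deg(o)=M'$.
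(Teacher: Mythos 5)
Your overall architecture is the same as the paper's: truncate $\T$ at degree $\frac{1}{8\lambda^2}$ and at a radius $R''$, bound the cost of reaching the leaves of the truncation by the supermartingale estimates of Lemma \ref{lembound}, and kill the process confined near $o$ by iterating the extinction results for stars and trees (Lemmas \ref{lem:extStar} and \ref{lem:extTree}); your decisive computations (the $i=1$ term $\lambda\cdot M'\cdot q(M,\infty)\asymp\lambda^{2a-5}\log\frac{1}{\lambda}$ for $2\frac{1}{2}<a\leq 3$, and the absence of vertices of degree $>\frac{1}{8\lambda^2}$ within distance $\asymp\log\frac{1}{\lambda}$ for $a>3$) are exactly those behind the paper's events $B^5_1$, $B^5_3$ and $B^6_2$, $B^6_3$. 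There are two problems, one of bookkeeping and one substantive. Bookkeeping: your stated decomposition does not match your bounds. The unrestricted event ``the infection reaches distance $R''$'' cannot be bounded by summing over the distance-$R''$ leaves of $\overline\T_{R'',1/(8\lambda^2)}$, since a path may reach distance $R''$ through a vertex of degree $>\frac{1}{8\lambda^2}$, where the $(2\lambda)$-per-edge estimate of Lemma \ref{lembound} is unavailable; and if you restrict that event to paths inside the truncated tree, its union with your amplification event no longer contains $\{\xi^o_t\neq\varnothing\;\forall t\}$, because the infection can pass through a high-degree vertex without ever infecting a $\frac{\min(\lambda,\lambda_0)}{16e}$-fraction of its neighbours. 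The repair is to replace ``amplification at $y$'' by ``the infection reaches $y$'' (the first high-degree vertex on each ray), which is in fact what your second computation bounds; note also that, conditionally on $\deg(o)=M'$, the factor $\mu$ in \reff{eq:expSRM2} must be replaced by $M'$, which raises the value of $R''$ you need.

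The substantive gap is the factor $T_0$, i.e.\ the claim that, barring the two escape events, nothing near $o$ is alive after time $T_0$, which is what licenses writing $\{o\}\times[0,T_0]\;\lra\;\cdots$ in both escape bounds. Iterating Lemma \ref{lem:extStar} controls only the star $B(o,1)$; as you yourself note, the infection can exit the star, survive in the bounded-degree part of the ball, and re-ignite $o$. Your proposed repair (freeze all vertices of degree $>\frac{1}{8\lambda^2}$ and compare with Lemma \ref{lem:extTree}) works for $a>3$, but only after adding the environment event $|B(o,R'')|\leq\lambda^{-3}$ (the paper's $B^6_1$), which you omit; and it fails for $2\frac{1}{2}<a\leq 3$: there the truncated ball has expected size of order $M'(C_0M^{3-a})^{R''-1}$, which for the $R''$ forced on you by the first escape bound (e.g.\ $R''\geq 4$ already at $a=2\frac{3}{4}$, and $R''\to\infty$ as $a\downarrow 2\frac{1}{2}$) is far larger than $\lambda^{-3}$; then the hypothesis of Lemma \ref{lem:extTree} is violated and, inside its proof, the wander-and-return term $(3L+1)(2\lambda)^4|T|$ is no longer small. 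The paper's resolution is not a size bound but a refined return estimate: the functional $\uppsi(\T)=\sum_{i\geq 2}(2\lambda)^{2i}|\{x\in\overline\T_{R',M}:d(o,x)=i\}|$, controlled by Markov's inequality (event $B^5_2$), bounds the probability of leaving $B(o,1)$ and later re-infecting $o$ via Lemma \ref{lembound}$(vi.)$ (event $B^5_4$) --- crucially only distances $i\geq 2$ enter, since the $i=1$ term would be of order $\lambda^2M'\asymp\log\frac{1}{\lambda}$, not small --- while survival confined to $B(o,1)$ up to time $L_1$ is ruled out by iterating Lemma \ref{lem:extStar} over $\asymp L_1/\log\frac{1}{\lambda}$ independent time windows (event $B^5_5$). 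Without this $i\geq 2$ return machinery, or an equivalent, the lifetime factor $T_0$ in your case $2\frac{1}{2}<a\leq 3$ is unjustified, and that is precisely the step carrying the weight of the lemma.
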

In the next two subsections, we prove Lemma \ref{lem:redTom} separately for the cases $2\frac{1}{2} < a \leq 3$ and $a > 3$.

\subsection{Completion of proof for $2\frac{1}{2} < a \leq 3$}
\label{ss:ca>212}
In this subsection and the next, we will consider the probability measure $\Q_{q}^\lambda(\;\cdot\;|\deg(o) = M')$, so $\T$ will be a tree with root degree equal to $M'$. Here we will give the proof in detail for the case $2\frac{1}{2} < a < 3$; the case $a = 3$ is treated similarly and we will omit it for brevity.

Let $\epsilon_1' = \frac{2a-5}{2}$ and $\epsilon_1 = \frac{\epsilon_1'}{64}$; this is the constant that appears in the definition of $M'$. Also let $L_1 = \lambda^{-\epsilon_1'/2}$ and fix an integer $R'$ large enough that $(2a-5)(R'-1) - 1 > 2a-5$. We will be particularly interested in the contact process on $B_{\T}(o, R')$ in the time interval $[0, L_1]$.

We will need the quantities
$$\begin{aligned}
&\upphi(\T) = \sum_{i=1}^{R'} (2\lambda)^i\cdot|S^i_{R',M}(\T)|;\\
&\uppsi(\T) = \sum_{i=2}^{R'} (2\lambda)^{2i} \cdot |\{x \in \overline \T_{R',M}: d(x, o) = i\}|.
\end{aligned}$$
We define two environment events, which are simply
$$B^5_1 = \left\{\upphi(\T) > \lambda^{\epsilon_1'}\right\};\qquad B^5_2 = \left\{\uppsi(\T) >  \lambda^{\epsilon_1'}\right\},$$
and then define three events involving the contact process:
$$\begin{aligned}
&B^5_3 = (B^5_1 \cup B^5_2)^c \cap \left\{\{o\} \times [0,L_1] \;\lra\;\left(\mathop{\cup}_{i=1}^R S_{R',M}^{i} \right) \times \R_+\right\};\\
&B^5_4 = (B^5_1 \cup B^5_2)^c \cap \left\{\exists z \in \overline \T_{R',M},\;s>0: \{o\}\times [0,L_1] \;\lra\;(z,s)\;\lra\; \{o\}\times [s, \infty) \text{ inside }\overline \T_{R',M} \right\};\\
&B^5_5 = \left\{B(o,1) \times \{0\} \;\lra\; B(0,1) \times \{L_1\} \text{ inside } B(o,1)\right\}.
\end{aligned}$$
We claim that $\left\{\xi^o_t \neq \varnothing \;\forall t\right\} \subset \cup_{i=1}^5 B^5_i$. To show this, it suffices to show that, if an infection path $t \mapsto \gamma(t) \in \T$ with $\gamma(0) = o$ ever reaches any point of $\cup_{i=1}^{R'} S^i_{R',M}$, then one of the events must occur. Let $t^* = \inf\{t: \gamma(t) \in \cup_{i=1}^{R'} S^i_{R',M}\}$ and $t^{**} = \sup\{t \leq t^*: \gamma(t) = o\}$. If $t^{**} \leq L_1$, then $B^5_3$ occurs. If $t^{**} > L_1$ and $\gamma(t) \in B(o,1)$ for all $t \in [0, t^{**}]$, then $B^5_5$ occurs. Otherwise, $B^5_4$ occurs.

We now want to show that the probability of each of the five events is less than $\lambda^\delta$ when $\lambda$ is small, for some $\delta > 0$.\medskip\\

\noindent \textbf{1) Event $B^5_1$.} Bounding as in (\ref{eq:expSRM1}), we have
\begin{eqnarray}
&&\E_{\Q_q}\left(\upphi(\T)\;|\; \deg(o) = M' \right) \nonumber\\&&\leq \sum_{i=1}^{R'-1}(2\lambda)^i \cdot M'\cdot (C_0M^{3-a})^{i-1} \cdot C_0M^{-(a-2)} + (2\lambda)^{R'}\cdot M' \cdot (C_0M^{3-a})^{R'-1}.\label{eq:lastAux}
\end{eqnarray}
The first term in (\ref{eq:lastAux}) is less than
$$2\lambda \cdot \epsilon_1\;\frac{1}{\lambda^2}\;\log\left(\frac{1}{\lambda}\right) \cdot C_0 (8\lambda^2)^{a-2}\cdot \sum_{i=1}^{R'-1}\left(C_0\left(\frac{1}{8\lambda^2}\right)^{3-a} \cdot 2\lambda\right)^{i-1} \leq C\lambda^{2a-5}\cdot \log\left(\frac{1}{\lambda}\right).$$
The second term in (\ref{eq:lastAux}) is less than
$$2\lambda \cdot \epsilon_1\;\frac{1}{\lambda^2}\;\log\left( \frac{1}{\lambda}\right)\cdot \left(C_0 \left( \frac{1}{8\lambda^2}\right)^{3-a} \cdot 2\lambda\right)^{R'-1}= 2\epsilon_1 \cdot \frac{1}{\lambda}\log \left(\frac{1}{\lambda}\right)\cdot \left(C_0\cdot \frac{1}{8^{3-a}}\cdot \lambda^{2a-5} \right)^{R'-1}$$
and this is also less than $C\lambda^{2a-5}\log(1/\lambda)$ by the choice of $R'$. This shows that
\begin{equation}
\nonumber\E_{\Q_q}\left(\upphi(\T)\;|\;\deg(o) = M' \right) \leq C \lambda^{2a-5}\cdot \log(1/\lambda)
\end{equation}
Thus, by the Markov inequality,
$$\Q_{q}\left(B^5_1\;|\;\deg(o) = M'\right) \leq \frac{C\lambda^{2a-5}\log(1/\lambda)}{\lambda^{(2a-5)/2}} < \lambda^{(2a-5)/4}.$$\\

\noindent \textbf{2) Event $B^5_2$.} Bounding as in (\ref{eq:expSRM2}),
$$\begin{aligned}
&\E_{\Q_q}\left(\uppsi(\T)\;|\;\deg(o) = M'\right) \leq M'\cdot \sum_{i=2}^{R'} (2\lambda)^{2i}\cdot (C_0M^{3-a})^{i-1}\\
&\leq \epsilon_1 \; \frac{1}{\lambda^2} \log\left(\frac{1}{\lambda}\right)\cdot \lambda^{3} \cdot\sum_{i=2}^{R'}(2\lambda)^{2i-3}\cdot\left(C_0 \left(\frac{1}{8\lambda^2}\right)^{3-a} \right)^{i-1} < C\lambda  \log \left(\frac{1}{\lambda}\right),
\end{aligned}$$
since the exponent of $\lambda$ inside the sum, $2i - 3 - 2(3-a)(i-1) = (2a-4)i + 3 -2a$, is positive when $i \geq 2$. The desired bound now follows from the Markov inequality as above.\medskip\\

\noindent \textbf{3) Event $B^5_3$.} For $x \in \T,\;x \neq o$, let $s(x)$ denote the neighbour of $o$ in the geodesic from $o$ to $x$, and let $\T(x)$ be the subtree of $\T$ with vertex set $$\{o\} \cup \{y\in \T: \text{the geodesic from $o$ to $y$ contains $s(x)$}\}$$ and edge set $\{\{z,w\}:z \sim w \text{ in } \T,\;z, w \in \T(x)\}$. 

For $B^5_3$ to occur, there must exist $x \in \cup_{i=1}^{R'} S^i_{R',M}$ so that $\{o\} \times [0, L_1] \;\lra\;\{x\}\times \R_+$ inside $\T(x) \cap \overline \T_{R',M}$. For a fixed $x$, the probability of such a path is less than $(L_1+1)\cdot(2\lambda)^{d(o,x)}$ by Lemma \ref{lembound}$(iv.)$, since $\T(x) \cap \overline \T_{R',M}$ is a tree in which all degrees are bounded by $M$. Summing over all $x$, this yields
$$P_{\T}^\lambda\left(\{o\} \times [0,L_1] \; \lra \;\mathop{\cup_{i=1}^{R'}}S^i_{R',M} \times \R_+ \right) \leq (L_1+1)\cdot \upphi(\T).$$
If $B^5_1$ does not occur, then the right-hand side is less than $(L_1+1)\cdot \lambda^{\epsilon'_1} = (\lambda^{-\epsilon_1'/2}+ 1)\cdot \lambda^{\epsilon_1'}$. Thus,
$$\Q_q^\lambda(B^5_3\;|\;(B^5_1)^c) < \lambda^{\epsilon_1'/2}+ \lambda^{\epsilon_1'}.$$\\

\noindent \textbf{4) Event $B^5_4$.} This is treated similarly to the previous event; here we use Lemma  \ref{lembound}$(vi)$ to conclude that
$$\Q_q^\lambda(B^5_4\;|\;(B^5_2)^c) < (L_1 + 1)\cdot \lambda^{\epsilon_1'} = \lambda^{\epsilon_1'/2}+ \lambda^{\epsilon_1'}.$$\\

\noindent \textbf{5) Event $B^5_5$.} For $i \leq 0$, let
$$E_i = \left\{B(o, 1) \times \{i\cdot 3\log(1/\lambda)\} \;\nleftrightarrow B(o,1) \times \{(i+1)\cdot 3\log(1/\lambda)\}\;\right\}.$$
These events are independent and, by Lemma \ref{lem:extStar},
\begin{equation}\Q^\lambda_q(E_i\;|\;\deg(o) = M') \geq (1/4)e^{-16\lambda^2M'} = (1/4)\lambda^{16\epsilon_1} = (1/4)\lambda^{\epsilon_1'/4}. \label{eq:compareStarl1}\end{equation}
If $B(o,1) \times \{0\} \; \lra \; B(0,1) \times \{L_1\}$, then $E_i$ cannot occur for
\begin{equation} 0 \leq i \leq \lfloor L_1/(3\log(1/\lambda)) \rfloor = \lfloor \lambda^{-\epsilon_1'/2}/(3\log(1/\lambda)) \rfloor.
\label{eq:compareStarl2}\end{equation}
Comparing (\ref{eq:compareStarl1}) and (\ref{eq:compareStarl2}), it is easy to see that $\Q_q^\lambda(B^5_5)$ is smaller than any power of $\lambda$ as $\lambda \to 0$.

\subsection{Completion of proof for $a > 3$}
\label{ss:ca>3}
Fix $\epsilon_2' > 0$ with $\epsilon_2' < \min\left((4\log\nu)^{-1},\; a-3\right)$ and set $\epsilon_2 = \frac{\epsilon_2'}{18}$; this is the constant that appears in the definition of $M'$. Also define $R' = \lceil\epsilon_2' \log \frac{1}{\lambda} \rceil$ and $L_2=\lambda^{-17\epsilon_2\log\frac{1}{\lambda}}$. We will be particularly interested in the contact process on $B_\T(o, R')$ in the time interval $[0, L_2]$.

This time, our environment events correspond to violations of the properties required for Lemma \ref{lem:extTree} to be applied:
$$\begin{aligned}
&B^6_1 = \left\{|B(o, R')| > \lambda^{-3}\right\};\\
&B^6_2 = \{\exists x \in \T\backslash\{o\}: d(o,x) \leq R',\;\deg(x) > M\}.\end{aligned}$$
The first event involving the contact process is the existence of an infection path starting on $\{o\}\times[0,L_2]$ and reaching vertices at distance more than $R'$ from the root,
$$B^6_3 = (B^6_1 \cup B^6_2)^c \cap \left\{\{o\}\times \left[0, L_2\right] \;\lra\; B(o, R')^c \times \R_+\right\},$$
The second event is the infection surviving up to time $L_2$ without leaving the ball $B(o, R')$,
$$B^6_4 = (B^6_1 \cup B^6_2)^c \cap \left\{B(o, R') \times \{0\} \;\lra \; B(o, R') \times \{L_2\} \text{ inside } B(o, R')\right\}.
$$
Again we have $\left\{\xi^o_t \neq \varnothing \; \forall t\right\} \subset \cup_{i=1}^4 B^6_i$. We proceed to show that each of these event has probability smaller than $\lambda^\delta$, for some $\delta > 0$.\medskip\\

\noindent \textbf{1) Event $B^6_1$.} Using Markov's inequality,
$$\begin{aligned}\Q_q(B^6_1\;|\;\deg(o) = M')&\leq \lambda^3\cdot \E_{\Q_q}\left(|B(o, R')|\;\big|\;\deg(o) = M'\right)\\
&\leq \lambda^3\cdot R'\cdot \E_{\Q_q}\left(|\{x \in \T: d(o,x) = R'\}|\; \big|\;\deg(o) = M'\right)\\
&\leq \lambda^3 \cdot R' \cdot M' \cdot \nu^{R'}\\
&\leq \lambda^3 \cdot 2\epsilon_2'\log \frac{1}{\lambda} \cdot 2\epsilon_2 \frac{1}{\lambda^2}\log^2\left(\frac{1}{\lambda}\right) \cdot \nu^{\frac{1}{4\log \nu}\log \frac{1}{\lambda}} < \lambda^{1/2}
\end{aligned}$$
when $\lambda$ is small.\medskip\\

\noindent \textbf{2) Event $B^6_2$.}
$$\begin{aligned}\Q_q(B^6_2\;|\;\deg(o)=M') &\leq \sum_{i=1}^{R'} \epsilon_2 \frac{1}{\lambda^2}\log\frac{1}{\lambda}\cdot \nu^{i-1}\cdot q(M,\infty) \\&\leq \lambda^{2(a-2)-2}\cdot \log \frac{1}{\lambda} \cdot \nu^{R'+1} < \lambda^{2a-6-2\epsilon'};\end{aligned}$$
by the choice of $\epsilon_2'$, $2a-6-2\epsilon' > 0$, so we are done.\medskip\\

\noindent \textbf{3) Event $B^6_3$.} Assume $(B^6_1 \cup B^6_2)^c$ occurs, so that $\T$ is such that $|B_\T(o, R')| \leq \lambda^{-3}$ and, with the exception of the root $o$, the degrees of all vertices in $B_\T(o, R')$ are less than $M$. Recall from the previous subsection (in the treatment of the event $B^5_3$) the definition of $\T(x)$ for a vertex $x \neq o$. Note that presently, for any $x \in B_\T(o, R')$, $\T(x)$ is a tree in which all degrees are bounded by $M$.

If $\{o\} \times [0,L_2] \;\lra\;B_\T(o, R')^c \times \R_+$, then there must exist $x$ with $d(o,x) = R'$ so that $\{o\} \times [0, L_2] \;\lra\;\{x\} \times \R_+$ inside $\T(x)$. For a fixed $x$, the probability of this is bounded by $(L_2 + 1)\cdot (2\lambda)^{R'}$, by Lemma \ref{lembound}$(iv.)$, so
$$P^\lambda_\T\left(\{o\} \times [0,L_2] \;\lra\; B(o, R')^c \times \R_+\right) \leq |\{x: d(o,x) = R'\}|\cdot (L_2+1)\cdot(2\lambda)^{R'} \leq \lambda^{-3}\cdot (L_2+1)\cdot(2\lambda)^{R'},$$
so that
$$\Q_q^ \lambda(B^6_3\;|\;(B^6_1 \cup B^6_2)^c,\;\deg(o) = M') \leq \lambda^{-3}\cdot \left(\lambda^{-17\epsilon_2\log\frac{1}{\lambda}}+1\right)\cdot (2\lambda)^ {\epsilon_2'\log\frac{1}{\lambda}},$$
so, using the fact that $\epsilon_2 = \frac{\epsilon_2'}{18},$ we are done.\medskip\\

\noindent \textbf{4) Event $B^6_4$.} Again assume that $(B^6_1 \cup B^6_2)^c$ occurs. For $i \geq 0$, let
$$F_i = \left\{B(o, R')\times \left\{i\cdot 100\log(1/\lambda)\right\}\nleftrightarrow B(o, R') \times \left\{(i+1)\cdot 100\log(1/\lambda)\right\}\right\}.$$
These events are independent and, by Lemma \ref{lem:extTree}, 
\begin{equation} P^\lambda_\T(F_i) \geq (1/8)e^{-16\lambda^2M'} = (1/8)\lambda^{16\epsilon_2\log(1/\lambda)}\label{eq:lastComp1}.\end{equation} If $B(o, R') \times \{0\} \;\lra\; B(o, R') \times \{L_2\}$, then $F_i$ cannot occur for 
\begin{equation} 0 \leq i \leq \lfloor L_2/(100\log(1/\lambda)) \rfloor = \lfloor \lambda^{-17\epsilon_2\log(1/\lambda)}/(100\log(1/\lambda)) \rfloor.\label{eq:lastComp2}\end{equation}
Comparing (\ref{eq:lastComp1}) and (\ref{eq:lastComp2}), it is easy to see that $\Q_q^\lambda(B^6_4\;|\;(B^6_1\cup B^6_2)^c)$ is smaller than any power of $\lambda$ as $\lambda \to 0$.

\section{Appendix: Proof of Theorem \ref{thm:reduc}}\label{s:appendix}
Recall the definition of $\mathcal{M}(x, R, K)$ and $\mathcal{N}(x, R, K)$ in (\ref{eq:defM}) and (\ref{eq:defN}). We will also need
$$\mathcal{L}(x,R) = \left\{(x,0) \;\lra\; B(x, R)^c \times \R_+ \right\}.$$

\begin{lemma}
\label{lem:aphelp}
For any $\epsilon > 0$ and $\lambda > 0$ there exists $K_0 > 0$ such that, for any $K > K_0$,
\begin{equation}\Q^\lambda_{q}\left(\left.{\mathop\cap_{i=K+1}^{\infty}} \;\mathcal{N}\left(o, a\log_2(i), i\right)\; \right|\;\deg(o) = K\right) > 1 - \epsilon.\label{eq:auxappend}\end{equation}
\end{lemma}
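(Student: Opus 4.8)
The plan is to run the lower-bound construction of Section \ref{s:lower} in the regime where $\lambda$ is fixed and the root degree $K$ is large. In that regime the ``first step'' of the construction of Section \ref{s:lower} (the root having a neighbour of high degree that gets infected) is replaced by the root \emph{itself} being a huge vertex, which now happens with certainty, and every later step succeeds with conditional probability tending to $1$ as $K\to\infty$. Before starting, a harmless reduction: the event $\mathcal{N}(o,R,i)$ is increasing in the graphical construction and uses the threshold $\min(\lambda,\lambda_0)/16e$, so by coupling the rate-$\lambda$ process with the rate-$\min(\lambda,\lambda_0)$ one (and using attractiveness) we may and do assume $\lambda<\lambda_0$, so that $\min(\lambda,\lambda_0)=\lambda$ and Lemmas \ref{basic} and \ref{lem:infNei} are applicable.

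Fix $\lambda<\lambda_0$ and put $r_i=\lfloor a\log_2 i\rfloor-2$. The first step is purely geometric: set $\mathcal{G}_K=\bigcap_{i>K}\mathcal{M}(o,r_i,i)$, i.e.\ the event that for every integer $i>K$ there is a vertex $y_i$ with $d(o,y_i)\le r_i$ and $\deg(y_i)>i$. I claim $\Q_q(\mathcal{G}_K\mid\deg(o)=K)\to1$ as $K\to\infty$. Indeed, since $q(\{0,1,2\})=0$ every vertex has degree at least $3$, so the number of vertices at graph distance $r_i-1$ from $o$ is at least $K\cdot 2^{\,r_i-2}\ge c\,i^{a}$ once $i$ is large; each of these (distinct) vertices has degree larger than $i$ independently with probability $q(i,\infty)\ge c_0'\,i^{-(a-2)}$ by \reff{c0a-2}; hence $\mathcal{M}(o,r_i,i)$ fails with probability at most $(1-c_0'\,i^{-(a-2)})^{c\,i^{a}}\le e^{-c''i^{2}}$, and $\sum_{i>K}e^{-c''i^{2}}\to0$. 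Since $\mathcal{G}_K$ is measurable with respect to the tree, we may condition on $\mathcal{G}_K\cap\{\deg(o)=K\}$ without disturbing the law of the contact process, and on $\mathcal{G}_K$ we fix once and for all such a chain $o=:y_K,\,y_{K+1},\,y_{K+2},\dots$.

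The second step propagates the infection along this chain. Work inside the finite subtree $G_i:=B_\T(o,a\log_2 i)$; the buffer of $2$ in the definition of $r_i$ guarantees that the root and each $y_j$ with $j\le i$ have their full $\T$-neighbourhoods inside $G_i$, so the degrees seen by Lemmas \ref{basic} and \ref{lem:infNei} are the true ones. The contact process on $G_i$ started from $\{o\}$ dominates the contact process on the star at $o$, so by Lemma \ref{basic}$(iii.)$ there is, with probability tending to $1$ as $K\to\infty$, a time $t_0$ with $|\xi^o_{t_0}\cap B(o,1)|>\tfrac1{4e}\lambda\deg(o)>\tfrac1{16e}\lambda|B(o,1)|$. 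Restarting at $t_0$ and using the Markov property, one then invokes Lemma \ref{lem:infNei} successively to hop from $y_j$ to $y_{j+1}$ for $j=K,K+1,\dots$: this is licit because $d(y_j,y_{j+1})\le r_j+r_{j+1}\le 2a\log_2(j+1)$ while $\deg(y_j)\ge\max(K,j)>\tfrac{3}{c_1}\tfrac1{\lambda^2}\log(\tfrac1\lambda)\,d(y_j,y_{j+1})$ for all $j\ge K$ once $K$ is large (here $\lambda$ is fixed and $j/\log j\to\infty$), and the $j$-th hop fails with probability at most $2e^{-c_1\lambda^2\deg(y_j)}\le 2e^{-c_1\lambda^2\max(K,j)}$, whose sum over $j\ge K$ is $O(e^{-c_1\lambda^2 K})\to0$. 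Organising the bootstrap event and the (nested) hop events on a single graphical construction, one gets that on an event whose conditional probability tends to $1$ as $K\to\infty$ the process restricted to $G_i$ reaches a fraction larger than $\tfrac{\lambda}{16e}$ of $B(y_i,1)$ for \emph{every} $i>K$; since that process only uses infection paths inside $B(o,a\log_2 i)$, and $d(o,y_i)\le r_i<a\log_2 i$ with $\deg(y_i)>i$, this is exactly $\bigcap_{i>K}\mathcal{N}\!\left(o,a\log_2 i,i\right)$. Combined with $\Q_q(\mathcal{G}_K\mid\deg(o)=K)\to1$, this yields \reff{eq:auxappend} as soon as $K_0=K_0(\epsilon,\lambda)$ is taken large enough.

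The step that requires the most care is the bookkeeping in the last paragraph: one must check that the infection paths produced by the iterated applications of Lemma \ref{lem:infNei} genuinely remain inside $B(o,a\log_2 i)$ — this is the reason for working on the subtrees $G_i$ (where the process is automatically confined to the right ball) and for the constant buffer in $r_i$ (so that the neighbourhoods of the $y_j$'s are not truncated) — and that the single bootstrap event at $o$ and the nested hop events are realised on one common graphical construction, so that their failure probabilities are summed only once rather than once per $i$. Everything else is a routine repetition of the estimates of Section \ref{s:lower}.
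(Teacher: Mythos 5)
Your proposal is correct and is essentially the proof the paper has in mind: the paper omits it, saying it is a repetition of the ideas of Subsection \ref{ss:lower} using Lemma \ref{basic}$(iii.)$ and Lemma \ref{lem:infNei}, which is exactly what you execute (minimum degree $3$ plus \reff{c0a-2} to build the chain of vertices of increasing degree, the bootstrap at the root, and iterated applications of Lemma \ref{lem:infNei} with failure probabilities summable in the degree, all quenched on the tree event). Your explicit handling of the ``inside $B(o,a\log_2 i)$'' requirement in the definition of $\mathcal{N}$ (working on the nested balls $G_i$ with the buffer in $r_i$) is a point the paper glosses over, and it is done correctly.
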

Clearly, it is enough to prove the above for $\lambda$ small enough. This is done using Lemma \ref{basic}$(iii.)$ and Lemma \ref{lem:infNei}; since the proof is essentially a repetition of the ideas of Subsection \ref{ss:lower}, we omit it.

The point of the following lemma is approximating the event $\{\xi^o_t \neq \varnothing \;\forall t\}$ on the infinite tree $\T$ by events involving the contact process on a finite ball around the root, $B(o, R)$.
\begin{lemma}
For any $\epsilon > 0,\; \lambda > 0$ and $R_0 > 0$, there exists $R > R_0$ such that
\begin{equation}\Q_{p,q}^\lambda\left(\mathcal{L}(o,R)\right) - \epsilon < \Q_{p,q}^\lambda\left(\xi^o_t \neq \varnothing \; \forall t \right) < \Q_{p,q}^\lambda\left(\mathcal{N}(o,R,R^2)\right) + \epsilon.\nonumber\end{equation}\label{lem:compFinEv}
\end{lemma}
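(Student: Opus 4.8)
I would prove the two inequalities separately. The left inequality is soft; the right one carries the real work and essentially repackages the arguments of Section~\ref{s:upper}.

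\textbf{Left inequality.} The events $\mathcal{L}(o,R)$ are decreasing in $R$. Since the contact process on any finite graph dies out almost surely, and since (by the graphical construction) on the event that $\xi^o$ never leaves $B(o,R)$ the process $\xi^o$ coincides with the contact process on the finite graph $B(o,R)$, we get $\{\xi^o_t\neq\varnothing\ \forall t\}\subseteq\mathcal{L}(o,R)$ for every $R$; hence $\Q_{p,q}^\lambda(\mathcal{L}(o,R))\geq\Q_{p,q}^\lambda(\xi^o_t\neq\varnothing\ \forall t)$ and $\Q_{p,q}^\lambda(\mathcal{L}(o,R))\downarrow\Q_{p,q}^\lambda\!\big(\bigcap_R\mathcal{L}(o,R)\big)$ as $R\to\infty$. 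It remains to see that $\bigcap_R\mathcal{L}(o,R)$ equals $\{\xi^o_t\neq\varnothing\ \forall t\}$ up to a $\Q_{p,q}^\lambda$-null set, i.e.\ that reaching arbitrarily far forces survival. This is where non-explosion of the contact process on $\T$ enters: almost surely, for every $T<\infty$ the set of vertices ever infected before time $T$ is finite, so a realisation that died out at a finite time could only have infected finitely many vertices and could not have met every ball. Non-explosion holds here because $p$ has finite mean (it is standard for contact processes on Galton--Watson trees with this degree law; alternatively, the first time the infection reaches distance $d$ stochastically dominates a $\mathrm{Gamma}(d,\lambda)$ variable, which tends to infinity). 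With this, $\Q_{p,q}^\lambda(\mathcal{L}(o,R))\downarrow\Q_{p,q}^\lambda(\xi^o_t\neq\varnothing\ \forall t)$, so some $R>R_0$ satisfies $\Q_{p,q}^\lambda(\mathcal{L}(o,R))-\epsilon<\Q_{p,q}^\lambda(\xi^o_t\neq\varnothing\ \forall t)$.

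\textbf{Right inequality.} It suffices to show $\Q_{p,q}^\lambda\big(\{\xi^o_t\neq\varnothing\ \forall t\}\setminus\mathcal{N}(o,R,R^2)\big)\to0$ as $R\to\infty$. On $\{\xi^o_t\neq\varnothing\ \forall t\}$ the infection leaves $B(o,R)$ (finite subgraphs die out, as above), so there is an infection path from $(o,0)$, contained in $B(o,R)$, reaching a vertex at distance exactly $R$. Consider the first vertex $y$ with $\deg(y)>R^2$ and $d(o,y)\leq R-1$ that becomes infected through a path contained in $B(o,R)$, if one exists. Conditioning on the history up to the first time $y$ is infected and comparing the contact process restricted to the star formed by $y$ and its neighbours (which the true process dominates) with that star's own contact process, Lemma~\ref{basic}$(iii.)$ gives that, with probability at least $1-\varepsilon_R$ where $\varepsilon_R\to0$ because $\deg(y)>R^2\to\infty$, the infection reaches more than $\frac{\lambda}{4e}\deg(y)$ neighbours of $y$; all of these and the paths realising them stay in $B(o,R)$ (since $d(o,y)\leq R-1$), and $\frac{\lambda}{4e}\deg(y)>\frac{\min(\lambda,\lambda_0)}{16e}\,|B(y,1)|$, so $\mathcal{N}(o,R,R^2)$ occurs. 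Hence, up to an error going to $0$, surviving without $\mathcal{N}(o,R,R^2)$ forces the infection to reach distance $R$ inside $B(o,R)$ through a path all of whose vertices at distance $\leq R-1$ have degree $\leq R^2$, that is, through the truncated tree $\overline{\T}_{R,R^2}$ of Section~\ref{s:upper}.

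\textbf{Bounding the escape through $\overline{\T}_{R,R^2}$.} The last step is to show this probability vanishes as $R\to\infty$. Here one cannot apply Lemma~\ref{lembound} directly because $\overline{\T}_{R,R^2}$ has degrees up to $R^2\gg\frac1{8\lambda^2}$, so I would reinstate the threshold $\frac1{8\lambda^2}$ and cut the escaping path at its passages through ``intermediate'' vertices (degree in $(\frac1{8\lambda^2},R^2]$): on the bounded-degree stretches between such vertices pay a factor $(2\lambda)^{(\mathrm{length})}$ via Lemma~\ref{lembound}$(i.)$,$(iv.)$; near the intermediate vertices control the excursions using Lemmas~\ref{basic} and \ref{lem:infNei}; and sum over the (few) high-degree vertices of the tree using the scarcity estimates (\ref{c0a-2}),(\ref{c03-a}) and the generation-size bounds (\ref{eq:expSRM1}),(\ref{eq:expSRM2}). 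This is precisely the bookkeeping carried out for the events $B^4_i$, $B^5_i$, $B^6_i$ in Section~\ref{s:upper}, now run inside the finite ball $B(o,R)$ with $R\to\infty$ in place of the $\lambda$-dependent thresholds. The main obstacle is exactly this last step: organising the iterative decomposition over intermediate high-degree vertices so that the total probability is shown to tend to $0$ uniformly as $R\to\infty$, which is the ``careful rereading'' of \cite{CD} alluded to in the text; recording the non-explosion input needed for the left inequality is a secondary, standard point.
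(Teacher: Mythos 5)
Your left inequality is fine and matches the paper, which simply uses the identity $\{\xi^o_t \neq \varnothing\ \forall t\} = \cap_{R}\,\mathcal{L}(o,R)$ together with continuity of measure. The problem is the final step of your right inequality. You reduce matters to showing that the probability of an infection path from $(o,0)$ reaching distance $R$ inside the truncated tree $\overline\T_{R,R^2}$ tends to $0$ as $R\to\infty$ (for fixed $\lambda$). This quantity does \emph{not} tend to $0$: once $R^2 \gg \lambda^{-2}\log^2(1/\lambda)$, the truncation at degree $R^2$ removes essentially nothing that the infection needs. Vertices with degree above the $\lambda$-dependent thresholds of Sections \ref{s:survestimate}--\ref{s:lower} occur with a density that is a positive constant depending only on $\lambda$, and the ball $B(o,R)$ contains at least $2^R$ vertices; so the climbing strategy of Section \ref{s:lower} (reach a vertex of degree $\approx \lambda^{-2}\log^2(1/\lambda)$ near the root, then hop via Lemmas \ref{basic} and \ref{lem:infNei} through vertices of ever larger degree, now capped below $R^2$) already takes the infection to distance $R$ without ever using a vertex of degree $> R^2$, with probability bounded below by a constant of order $\rho_a(\lambda)$, uniformly in $R$. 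Consistently with this, the tools you invoke cannot produce the bound: Lemmas \ref{basic} and \ref{lem:infNei} are survival (lower-bound) estimates and give no upper control on excursions near vertices of degree in $(\tfrac1{8\lambda^2},R^2]$, while the bookkeeping of Section \ref{s:upper} works only because vertices above the $\lambda$-dependent thresholds $M,M'$ are \emph{rare} (their scarcity is what produces the factor $\rho_a(\lambda)$); with the $R$-dependent threshold $R^2$ there is no scarcity to exploit. So the step you yourself flag as the main obstacle is not just hard — the statement you plan to prove there is false, and the weaker statement you actually need (that surviving infections rarely avoid infecting some vertex of degree $>R^2$ within distance $R-1$) is not accessible by this path-counting route.

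The paper avoids this entirely by not trying to rule out ``bad'' escapes and by exploiting that the lemma only asks for \emph{some} $R>R_0$. It tracks the first infected vertex $X_{N_{k_0}}$ of degree exceeding a fixed $k_0$ (on survival such a vertex is hit, within some fixed distance $r_0$, with probability close to the survival probability; on extinction it is not, for $k_0$ large), and then runs the lower-bound machinery forward from $X_{N_{k_0}}$: Lemma \ref{lem:aphelp} (itself a repackaging of Lemmas \ref{basic}$(iii.)$ and \ref{lem:infNei}) shows that, conditionally, with probability $>1-\epsilon$ the infection goes on to infect a positive fraction of the neighbours of some vertex of degree $>k_1$ within distance $a\log_2 k_1$ of $X_{N_{k_0}}$. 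Choosing $k_1$ so large that $(r_0+a\log_2 k_1)^2<k_1$ and setting $R=r_0+a\log_2 k_1>R_0$ makes this exactly the event $\mathcal{N}(o,R,R^2)$. If you want to salvage your stronger ``for all large $R$'' formulation, you would still need an argument of this constructive type (show that surviving infections actively create the event $\mathcal{N}$), not an upper bound on escapes through $\overline\T_{R,R^2}$. Your intermediate star-comparison step (first infected vertex of degree $>R^2$, strong Markov property, Lemma \ref{basic}$(iii.)$) is sound, but it is the complementary case that your plan cannot handle.
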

\begin{proof} Fix $\epsilon,\;\lambda$ and $R_0$. The existence of $R$ such that the first inequality is satisfied is a direct consequence of 
$\left\{\xi^o_t \neq \varnothing \; \forall t \right\} = \cap_{r=1}^\infty\; \mathcal{L}(o,R).$

Let us now deal with the second inequality. For the process $(\xi^o_t)_{t\geq 0}$, let $\sigma_i$ be the first time the infection reaches a vertex at distance $i$ from the root, and $X_i$ the vertex that becomes infected at this time. Define $N_k = \inf\{i: \deg(X_i) > k\}$. Since
$$\begin{aligned}
&\lim_{k \to \infty} \Q^\lambda_{p,q}\left(\left.N_k < \infty \;\right|\;\exists t: \xi^o_t = \varnothing \right) = 0 \text{ and }\\
&\lim_{r\to\infty}\Q^\lambda_{p,q}\left(\left.N_k < \infty,\;d(o,X_{N_k}) < r \;\right|\;\xi^o_t \neq \varnothing \;\forall t\right) = 1 \text{ for all } k,
\end{aligned}$$
we can choose $r_0, k_0$ such that 
$$\Q_{p,q}^\lambda(N_{k_0} < \infty,\; d(o, X_{N_{k_0}}) < r_0) > \Q_{p,q}^\lambda\left(\xi^o_t \neq \varnothing \; \forall t \right) - \epsilon.$$
Also assume $k_0$ is large enough that (\ref{eq:auxappend}) is satisfied when $K = k_0 - 1$.

Choose $k_1$ large enough that $\left(r_0 + a \log_2 k_1\right)^2 < k_1$ and $r_0 + a \log_2 k_1 > R_0$. We define the event $\mathcal{N}'\left(X_{N_{k_0}}, a\log_2 k_1, k_1 \right)$ as the event $\mathcal{N}\left(X_{N_{k_0}}, a\log_2 k_1, k_1 \right)$ with time shifted so that $\sigma_{N_{k_0}}$ becomes the time origin (so that the infection starts at the space-time point $(X_{N_{k_0}},\;\sigma_{N_{k_0}})$). By the definition of $N_{k_0}$ and the choice of $k_0$,
$$\begin{aligned}&\Q_{p,q}^\lambda\left(\left.\mathcal{N}'\left(X_{N_{k_0}}, a\log_2 k_1, k_1 \right) \right| \;N_{k_0} < \infty,\;d(o, X_{N_{k_0}}) < r_0  \right)\medskip\\
&\geq \Q^\lambda_q\left( \;\mathcal{N}(o, a\log_2 k_1, k_1 )\;|\;\deg(o) = k_0 - 1 \right) > 1 - \epsilon.\end{aligned}$$

We have thus shown that, with probability larger than $(1-\epsilon)\left(\Q^\lambda_{p,q}\left(\xi^o_t \neq \varnothing \forall t\right) -\epsilon \right)$, the infection reaches a site $X_{N_{k_0}}$ of degree larger than $k_0$ and distance less than $r_0$ from the root and then, reaches a site $y$ of degree larger than $k_1$ and distance less than $a\log_2 k_1$ from $X_{N_{k_0}}$. All this occurs through infection paths through vertices whose distance from the root is never more than $R := r_0 + a \log_2 k_1$, so that $R > R_0$ and $R^2 < k_1$ as required. Since $\epsilon$ is arbitrary, the proof is complete.
\end{proof}

\begin{lemma}
\label{lem:NGn}
For any $\epsilon > 0,\; \lambda > 0$ and $(t_n)$ with $\log t_n = o(n)$, there exists $R>0$ such that
$$\liminf_{n \to \infty}\; \P^\lambda_{p, n}\left(\xi^{v_1}_{t_n} \neq \varnothing \;|\;\mathcal{N}(v_1, R, R^2)\right) > 1 - \epsilon.$$
\end{lemma}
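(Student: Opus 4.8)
Fix $\epsilon>0$; as throughout Section~\ref{s:lower} we assume $\lambda<\lambda_0$. The idea is that, conditionally on $\mathcal N(v_1,R,R^2)$, the infection on $G_n$ reproduces its behaviour on the tree in Section~\ref{s:lower}: it climbs to vertices of ever higher degree and settles into a configuration that survives far beyond $t_n$. The event $\mathcal N(v_1,R,R^2)$ exhibits a vertex $y_0$ with $d(v_1,y_0)\le R$, $\deg(y_0)>R^2$, and a first time $t_0$ — a.s.\ finite, and a stopping time for the graphical construction inside $B_{G_n}(v_1,R)$ — at which $\tfrac{|\xi^{v_1}_{t_0}\cap B(y_0,1)|}{\lambda\,|B(y_0,1)|}>\tfrac1{16e}$, witnessed by infection paths inside $B_{G_n}(v_1,R)$. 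Restarting at $t_0$ (strong Markov property) and using attractiveness of the contact process, the lemma reduces to the following: \emph{for $R$ large enough depending on $\epsilon,\lambda$, if the contact process on $G_n$ starts with $\tfrac{|\xi_0\cap B(x,1)|}{\lambda\,|B(x,1)|}>\tfrac1{16e}$ for some vertex $x$ with $\deg(x)>R^2$, then $\liminf_n\P^\lambda_{p,n}(\xi_{t_n}\ne\varnothing)\ge1-\tfrac\epsilon2$.}

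To prove the reduced statement I would first climb locally. With a small fixed $\delta>0$, let $D_n$ be the minimum of $\lfloor n^\delta\rfloor$ and $\lceil 2c_1^{-1}\lambda^{-2}\log t_n\rceil$; for all large $n$, $D_n$ is at most the largest degree occurring in $G_n$, and (as is standard for the configuration model — see \cite{CD} and Chapter~3 of \cite{Dur}) with $\P_{p,n}$‑probability $\to1$ the ball $B_{G_n}(x,\rho_n)$, with $\rho_n=O(\log D_n)=O(\log n)$, is a tree whose degrees follow, to leading order, the two‑stage Galton–Watson law. On this ball one mimics the proof of Lemma~\ref{lem:aphelp}: its only contact‑process inputs are Lemma~\ref{basic} and Lemma~\ref{lem:infNei} (valid on arbitrary connected graphs), while its degree‑counting part transfers to $B_{G_n}(x,\rho_n)$ via the tree‑likeness just mentioned. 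This produces, with probability $\ge1-\tfrac\epsilon4$ once $R$ is large, a ladder $x=x_0,x_1,\dots,x_J$ with $\deg(x_0)>R^2$, geometrically growing degrees, $d(x_{l-1},x_l)=O(\log\deg(x_{l-1}))$ (so that Lemma~\ref{lem:infNei} applies, since $\deg(x_{l-1})\gg\lambda^{-2}\log(1/\lambda)\,d(x_{l-1},x_l)$), $\deg(x_J)\ge D_n$, a fraction $>\tfrac1{16e}$ of $B(x_l,1)$ infected at each level, and $J=O(\log\log n)$. The hop errors from Lemma~\ref{lem:infNei} sum to at most $2\sum_l e^{-c_1\lambda^2\deg(x_l)}<\tfrac\epsilon4$, and since the $l$‑th hop takes time $\le e^{c_1\lambda^2\deg(x_l)}$ with the penultimate degree a fixed power $<1$ of $D_n$, a short computation shows the whole climb ends at a time that is $o(t_n)$.

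Once a vertex $x_J$ of degree $\ge D_n$ carries a fraction $>\tfrac1{16e}$ of its neighbourhood infected, by some time $o(t_n)$, two cases arise. If $D_n=\lceil 2c_1^{-1}\lambda^{-2}\log t_n\rceil$, the star of $x_J$ suffices on its own: dominating $\xi$ from below by the contact process on that star and applying Lemma~\ref{basic}$(ii.)$ with target time $e^{c_1\lambda^2\deg(x_J)}\ge t_n^2$ gives $\xi_{t_n}\ne\varnothing$ with probability $\ge1-e^{-c_1\lambda^2 D_n}$ (using that $\varnothing$ is absorbing). If instead $D_n=\lfloor n^\delta\rfloor<\lceil 2c_1^{-1}\lambda^{-2}\log t_n\rceil$, then $\log t_n$ is too large for a single star, and one invokes the metastability of the contact process on $G_n$: by \cite{CD}, sharpened to an honest exponential in \cite{MMVY}, from a lit vertex of polynomial degree $n^\delta$ the infection reaches the metastable configuration of $G_n$, which survives for a time $\ge e^{cn}$ with probability $\to1$; since $\log t_n=o(n)$ and the process is in that regime by time $o(t_n)$, $\xi_{t_n}\ne\varnothing$ with probability $\ge1-\tfrac\epsilon2$ for all large $n$. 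Choosing $R$ large first and then letting $n\to\infty$, and collecting $\tfrac\epsilon4+\tfrac\epsilon4$ from the climb together with the endgame error, yields the claim.

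The genuinely non‑routine step is the endgame in the second case: carrying the infection from a neighbourhood of only polynomial size all the way into the metastable plateau of $G_n$, where it survives for a time exponential in $n$. This is precisely the content of \cite{CD} (and \cite{MMVY}), and, as the Appendix already announces, the proof is a careful rereading of their arguments; the reduction and the local climb, by contrast, are straightforward adaptations of Section~\ref{s:lower} together with the standard local tree‑likeness of $G_n$ at logarithmic scales.
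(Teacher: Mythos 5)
Your overall architecture is the paper's: reduce via the witness $(y^*,t^*)$ of $\mathcal{N}(v_1,R,R^2)$, climb a ladder of vertices of geometrically increasing degree using Lemma \ref{lem:infNei}, and hand over to the metastability input from \cite{CD}/\cite{MMVY} (this is exactly Lemma \ref{lem:SurCD}) once a vertex of degree $n^\delta$ is infected. The genuine gap is the step that produces the ladder inside $G_n$. You transfer the degree-counting from the Galton--Watson tree to $G_n$ by asserting, as ``standard'', that the ball $B_{G_n}(x,\rho_n)$ with $\rho_n=O(\log D_n)=O(\log n)$ is with high probability a tree whose degrees follow the two-stage Galton--Watson law. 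Proposition \ref{lem:kGW} gives nothing of the sort (it is for a fixed radius $R$), and the claim is false as stated in part of the regime covered by the paper: for $2<a\leq 3$ one has $\sum_k k\,q(k)=\infty$, balls grow super-exponentially, typical distances in $G_n$ are $o(\log n)$, and a ball of radius of order $\log n$ around $v_1$ with high probability exhausts the graph and contains cycles. Even for $a>3$ the statement needs a quantitative exploration estimate (roughly $\nu^{\rho_n}\ll\sqrt{n}$, i.e.\ a constraint tying $\delta$ to $\nu$, plus a coupling of the revealed degrees with the size-biased law), not a citation. Since the whole content of the lemma, beyond Lemma \ref{lem:SurCD}, is precisely the simultaneous statement ``for every $k\leq n^{\delta}$ there is a vertex of degree $\geq k$ within distance $a\log_2 k$ of $v_1$'', this is the missing nontrivial ingredient, not a routine transfer.

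The paper fills this hole without any tree-likeness of large balls: it runs the half-edge exploration from $v_1$, shows that no cycle is created in the first $n^{\kappa}$ matchings (Lemma \ref{lem:echainb}), and then bounds directly, on a likely event about the empirical degrees, the probability of meeting a vertex of degree $\geq k$ within $k^a$ exploration steps, using that at least $2^i$ vertices sit at distance $i$ (Lemma \ref{lem:echain}); note that Lemma \ref{lem:infNei} requires no tree structure, so only the existence of the high-degree vertices is needed, not that the surrounding ball is a tree. Two smaller points: your case distinction on $D_n$ and the ``climb ends by time $o(t_n)$'' computation are unnecessary -- one applies the Markov property at the random time the degree-$n^{\delta}$ vertex is reached, and survival for a further $t_n$ from that moment already implies $\xi_{t_n}\neq\varnothing$ -- and your reduction by restarting at the stopping time $t_0$ conditions the law of the graph inside $B(v_1,R)$, which is why the paper instead proves the graph statement (\ref{eq:auxNew}) uniformly over all admissible $y^*$, decoupling it from the conditioning on $\mathcal{N}(v_1,R,R^2)$.
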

Since the proof of this lemma requires several preliminary results, we will postpone it. With Lemmas \ref{lem:compFinEv} and \ref{lem:NGn} at hand, we are ready for our main proof.

\bprthm{thm:reduc}
Fix $\lambda >0,\;\epsilon >0$ and $(t_n)$ with $t_n \to \infty$ and $\log t_n = o(n)$. We will write $\upgamma =  \Q_{p,q}^\lambda \left(\xi^o_t \neq \varnothing \;\forall t\right)$.

By Lemmas \ref{lem:compFinEv} and \ref{lem:NGn}, we can choose $R > 0$ such that
\begin{equation} \begin{array}{l} 
\Q_{p,q}^\lambda\left(\mathcal{L}(o,R)\right) - \epsilon < \upgamma < \Q_{p,q}^\lambda\left(\mathcal{N}(o,R,R^2)\right) + \epsilon;\medskip\\
{\displaystyle \limsup_{n \to \infty}}\;\P_{p,n}^\lambda \left(\xi^{v_1}_{t_n} = \varnothing \;|\;\mathcal{N}(v_1, R, R^2) \right) < \epsilon^2.
\end{array}
\label{eq:auxCompLem}
\end{equation}
For the contact process with parameter $\lambda$ on $G_n$, define:
$$X_{n,i} = I_{\left\{\xi^{v_i}_{t_n} \neq \varnothing \right\}},\qquad \overline X_{n,i} = I_{\mathcal{L}(v_i, R)},\qquad  Y_{n,i} = I_{\mathcal{L}(v_i, R)^c\;\cap \;\left\{\xi^{v_i}_{t_n} \neq \varnothing\right\}},\qquad 1\leq i \leq n.$$
Under $\P_{p,n}^\lambda$, $(X_{n,1},\ldots, X_{n,n}),\;(\overline X_{n,1},\ldots, \overline X_{n,n})$ and $(Y_{n,1},\ldots, Y_{n,n})$ are exchangeable random vectors with
$X_{n,i} \leq \overline X_{n,i} + Y_{n,i}$ for each $i$ and, by Proposition \ref{lem:kGW},
\begin{equation}\label{eq:XXY2}
\begin{array}{l}{\displaystyle \lim_{n \to \infty}}\;\P_{p,n}^\lambda\left(\overline X_{n,1} = 1\right) = \Q_{p,q}^\lambda(\mathcal{L}(o,R)) < \upgamma + \epsilon;\medskip\\ {\displaystyle \lim_{n \to \infty}}\;\P_{p,n}^\lambda\left(Y_{n,1} = 1\right) = 0;\medskip\\{\displaystyle \lim_{n \to \infty}}\;\mathsf{Cov}(\overline X_{n,1}, \overline X_{n,2}) = 0.\end{array}
\end{equation}
Using duality for the contact process, we have
$$\begin{aligned}\P_{p,n}^\lambda\left(|\xi^{V_n}_{t_n} | > (\upgamma + 3\epsilon)n \right)&= \P_{p,n}^\lambda\left(\sum_{i=1}^n X_{n,i} > (\upgamma + 3\epsilon)n \right)\\
&\leq\P_{p,n}^\lambda\left(\sum_{i=1}^n \overline X_{n,i} > (\upgamma + 2\epsilon)n \right) + \P_{p,n}^\lambda\left(\sum_{i=1}^n Y_{n,i} > \epsilon n \right) \stackrel{n \to \infty}{\xrightarrow{\hspace*{0.8cm}}}0
\end{aligned}$$
by (\ref{eq:XXY2}).

We now define
$$\overline Z_{n,i} = I_{\mathcal{N}(v_i, R, R^2)},\qquad W_{n,i} = I_{\mathcal{N}(v_i, R, R^2) \; \cap \;\left\{\xi^{v_i}_{t_n} = \varnothing\right\}}, \qquad 1 \leq i \leq n.$$
Again, we get exchangeable random vectors and $X_{n,i} \geq \overline Z_{n,i} - W_{n,i}$. By Proposition \ref{lem:kGW} and (\ref{eq:auxCompLem}),
\begin{equation}
\label{eq:ZZW2}\begin{array}{l}
{\displaystyle \lim_{n \to \infty}}\P_{p,n}^\lambda\left(\overline Z_{n,1} = 1 \right) = \Q_{p,q}^\lambda\left(\mathcal{N}(o, R, R^2)\right) > \upgamma - \epsilon,\medskip\\
{\displaystyle \limsup_{n\to\infty}}\;\P_{p,n}^\lambda\left(W_{n,1} = 1\right) < \epsilon^2,\medskip\\
{\displaystyle \lim_{n \to \infty}}\;\mathsf{Cov}(\overline Z_{n,1}, \overline Z_{n,2}) = 0.
\end{array}
\end{equation}
We then have
$$\begin{aligned}\P_{p,n}^\lambda\left(|\xi^{V_n}_{t_n}| < (\upgamma - 3\epsilon)n\right) &=\P_{p,n}^\lambda\left(\sum_{i=1}^n X_{n,i} < (\upgamma - 3\epsilon)n \right)\\
&\leq \P_{p,n}^\lambda\left(\sum_{i=1}^n \overline Z_{n,i} < (\upgamma - 2\epsilon)n \right) + \P_{p,n}^\lambda\left(\sum_{i=1}^n W_{n,i} > \epsilon n \right).
\end{aligned}$$
The first term in the right-hand side vanishes as $n \to \infty$; by Markov's inequality, the second term is less than
$$\frac{n\E_{n,p}^\lambda\left(W_{n,1}\right)}{\epsilon n} \leq \frac{2\epsilon^2n}{\epsilon n} = 2\epsilon$$
when $n$ is large. Since $\epsilon$ is arbitrary, the proof is now complete.
\eprthm

We now turn to the proof of Lemma \ref{lem:NGn}. Let us first explain our approach. We want the infection started at $v_1$ to survive until time $t_n$. Lemma \ref{lem:SurCD} below guarantees that, to this end, it is enough to show that the infection reaches a vertex of degree $n^\delta$. Lemmas \ref{lem:echainb} and \ref{lem:echain} show that with high probability, there exists a ``bridge'' of vertices of increasing degree that can take the infection from $v_1$ to a site of degree larger than $n^\delta$. In order to cross this bridge, the infection needs some ``initial strength'', which is provided by the event $\mathcal{N}(v_1, R, R^2)$ in the conditioning in the probability in Lemma \ref{lem:NGn}.

The following result was proved in \cite{CD} for $t_n = e^{n^\beta}$, where $\beta < 1$. Applying the exponential extinction time result of \cite{MMVY}, it is easy to improve this to $t_n$ satisfying $\log(t_n) = o(n)$.
\begin{lemma}
\label{lem:SurCD}
For any $\delta,\; \epsilon,\;\lambda > 0$ and $(t_n)$ with $\log t_n = o(n)$, we have
$$\P_{p,n}\left(\min_{v \in V_n:\; \deg(v) \geq n^\delta} \;P_{G_n}^\lambda\left(\xi^v_{t_n} \neq \varnothing\right) > 1 - \epsilon \right) \stackrel{n \to \infty}{\xrightarrow{\hspace*{0.8cm}}}1.$$
\end{lemma}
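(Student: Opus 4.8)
The plan is to combine the survival statement of \cite{CD}, which already yields the conclusion for $t_n=e^{n^{\beta}}$ with $\beta<1$, with the exponential lower bound on the extinction time proved in \cite{MMVY}; the two glue together by a single monotone \emph{restart}. Fix $\delta,\epsilon,\lambda>0$. Let $A_n\subseteq V_n$ be the core set of hubs on which the persistence argument of \cite{MMVY} is based, and let $c>0$ be such that, with $\P_{p,n}$-probability tending to $1$, $P^{\lambda}_{G_n}(\xi^{A_n}_{t}\neq\varnothing)\geq 1-\epsilon/2$ for all $t\leq e^{cn}$ (this is essentially the content of the exponential extinction time estimate of \cite{MMVY}: exponential persistence cannot be carried by the low-degree periphery, so it is really the core $A_n$ that survives that long). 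Since $\log t_n=o(n)$, we have $t_n\leq e^{cn}$ for all $n$ large, so on this good event $P^{\lambda}_{G_n}(\xi^{A_n}_{t_n}\neq\varnothing)\geq 1-\epsilon/2$.

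The second ingredient, which is the heart of the matter, is a \emph{flooding} estimate: with $\P_{p,n}$-probability tending to $1$, simultaneously for every $v$ with $\deg(v)\geq n^{\delta}$ one has
$$P^{\lambda}_{G_n}\Big(\exists\,S<\infty:\ A_n\subseteq\xi^{v}_{S}\Big)\ \geq\ 1-\tfrac{\epsilon}{2}.$$
I would obtain this from the spreading arguments of \cite{CD} (the bridge of vertices of increasing degree, of the same flavour as Lemmas \ref{lem:echainb}--\ref{lem:echain}): their mechanism for keeping the infection alive until $e^{n^{\beta}}$ is precisely to route it, via such a bridge and the associated hub stars, into a macroscopic infected set, and one reads off that this set can be taken to contain $A_n$. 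Alternatively one may begin the routine more elementarily: by Lemma \ref{basic}$(iii.)$ the infection started at $v$ reaches, with probability tending to $1$, a set of size of order $\lambda\deg(v)$ on the star $B(v,1)$, and from there the spreading estimates of \cite{CD} carry it to $A_n$ with high probability.

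Granting these two facts, the conclusion is immediate. Work on the intersection of the two $G_n$-events above (still of $\P_{p,n}$-probability tending to $1$) and fix $v$ with $\deg(v)\geq n^{\delta}$; let $S=\inf\{t:A_n\subseteq\xi^{v}_{t}\}$. On the flooding event $S<\infty$ and $\xi^{v}_{S}\supseteq A_n$, so, conditioning on $\mathcal{F}_{S}$ and using the strong Markov property together with monotonicity of the graphical construction, the process continued from $\xi^{v}_{S}$ dominates an independent copy of $(\xi^{A_n}_{u})_{u\geq 0}$ restarted at time $S$. Hence
$$P^{\lambda}_{G_n}\left(\xi^{v}_{t_n}\neq\varnothing\right)\ \geq\ \Big(1-\tfrac{\epsilon}{2}\Big)\cdot P^{\lambda}_{G_n}\left(\xi^{A_n}_{(t_n-S)^{+}}\neq\varnothing\right)\ \geq\ \Big(1-\tfrac{\epsilon}{2}\Big)\cdot P^{\lambda}_{G_n}\left(\xi^{A_n}_{t_n}\neq\varnothing\right)\ \geq\ \Big(1-\tfrac{\epsilon}{2}\Big)^{2}\ >\ 1-\epsilon,$$
where the second inequality holds because survival up to the smaller time $(t_n-S)^{+}\le t_n$ is at least as likely (and is automatic when $S\ge t_n$, in which case the infection has already outlasted $t_n$). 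As this bound is uniform over all $v$ with $\deg(v)\geq n^{\delta}$, we get $\P_{p,n}\big(\min_{v:\,\deg(v)\geq n^{\delta}}P^{\lambda}_{G_n}(\xi^{v}_{t_n}\neq\varnothing)>1-\epsilon\big)\to 1$.

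The main obstacle is the flooding step: what is quoted from \cite{CD} is a survival statement, so one must either revisit their proof to extract, uniformly over starting vertices of degree $\geq n^{\delta}$ and with high probability over $G_n$, the stronger conclusion that the infection actually reaches the core $A_n$, or reprove this from the bridge construction together with Lemma \ref{basic}. The remaining points — using $\log t_n=o(n)$ to ensure $t_n\leq e^{cn}$, the monotone restart, and the uniformity over high-degree vertices — are routine.
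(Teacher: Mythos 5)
The paper does not actually write out a proof of this lemma: it simply notes that \cite{CD} proves the statement for $t_n=e^{n^{\beta}}$, $\beta<1$, and that the extension to $\log t_n=o(n)$ follows by invoking the exponential extinction time bound of \cite{MMVY}. Your overall architecture (CD for the initial spreading, MMVY for the exponential time scale, glued by a monotone restart at a stopping time) is therefore exactly the intended route, and the restart inequality itself, including the treatment of $S\geq t_n$ and the monotonicity in time of survival, is fine.

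The genuine gap is the ``flooding'' step, and it is worse than merely unproved: as formulated, its two requirements on $A_n$ are in tension. For the first ingredient to hold --- survival of $(\xi^{A_n}_t)$ up to $e^{cn}$ with probability $\geq 1-\epsilon/2$ --- the sustaining structure must have size of order $n$ (a substructure with $m$ vertices and edges dies out by time roughly $e^{Cm}$, since with probability at least $e^{-C m}$ all its infections vanish in a unit time interval before any transmission), and a linear-size subset of $V_n$ consists overwhelmingly of vertices of bounded degree. But then requiring $A_n\subseteq\xi^{v}_{S}$ at a \emph{single} time $S$ is exponentially costly: in the metastable regime each bounded-degree vertex is infected at a given time with probability of order $\rho_a(\lambda)$, which is small for small $\lambda$, so simultaneous coverage of a linear-size core has probability like $\rho_a(\lambda)^{cn}$ per unit time, which cannot be compensated by the $e^{cn}$ time units available; hence the flooding event has probability tending to $0$, not $\geq 1-\epsilon/2$. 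Conversely, if you shrink $A_n$ to a small set of very high-degree hubs so that coverage becomes feasible, the first ingredient no longer follows from (or resembles) the theorem of \cite{MMVY}, whose statement concerns the process started from full occupancy. The correct glue is to replace ``cover $A_n$'' by a much weaker seed condition that the internal arguments of \cite{CD} and \cite{MMVY} actually use as a starting point --- e.g.\ that the infection started at $v$ reaches a vertex of near-maximal degree and lights up a constant fraction of its star (which is obtainable from Lemma \ref{basic}$(iii.)$ and iterated applications of Lemma \ref{lem:infNei} along a bridge of increasing degrees, as in the proof of Lemma \ref{lem:NGn}) --- and then to extract from \cite{MMVY} that from such a seed the process survives for time $e^{cn}$ with high probability. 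That extraction, rather than full coverage of a core, is the step that needs to be carried out; in your current write-up it is replaced by a claim that is both stronger than necessary and, for small $\lambda$, false.
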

\noindent In words: as $n$ becomes large, the probability of the following converges to 1: the graph $G_n$ is such that, starting the $\lambda$-contact process with a single infection at any site of degree larger than $n^\delta$, with probability larger than $1-\epsilon$ the process will still be active by time $t_n$.

In order to prove the two following lemmas, we describe an alternate, algorithmic construction of the random graph $G_n$. Let $d_1, \ldots,d_n$ be independent with law $p$ and, by adding a half-edge to some vertex if necessary, assume that $\sum_{i=1}^n d_i$ is even. We will match pairs of half-edges, one pair at a time. Let $\mathcal{H}$ denote the set of half-edges. To start, we select a half-edge $h_1$ in any way we want and then choose a half-edge $h_2$ uniformly at random from $\mathcal{H}\backslash\{h_1\}$. We then match $h_1$ and $h_2$ to form an edge. Next, we select a half-edge $h_3$ from $\mathcal{H} \backslash \{h_1, h_2\}$, match it to a half-edge $h_4$ uniformly chosen from $\mathcal{H} \backslash \{h_1, h_2, h_3\}$, and so on, until there are no more half-edges to select. With a moment's reflection, we see that the random graph produced from this procedure is $G_n$.

\begin{lemma}
\label{lem:echainb}
There exists $\kappa = \kappa(a) > 0$ such that, with probability tending to 1 as $n \to \infty$, no cycle is formed when less than $n^\kappa$ matchings of half-edges are made.
\end{lemma}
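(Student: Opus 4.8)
The statement is a purely combinatorial fact about the configuration model, independent of the contact process, so I will work entirely with the algorithmic ``match one half-edge at a time'' construction described just above the lemma. Fix the degree sequence $d_1,\dots,d_n$ drawn i.i.d.\ from $p$. The key point is that a cycle is formed at the $j$-th matching precisely when the half-edge $h_{2j}$ chosen uniformly from the remaining pool happens to belong to a vertex that is \emph{already} incident to at least one previously matched half-edge. So I would estimate, over the first $N := \lfloor n^\kappa\rfloor$ matchings, the probability that this ``bad event'' ever occurs, and show it tends to $0$ for $\kappa$ small enough.

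First I would control the total degree: since $a>2$, $p$ has finite mean $\mu$, and by the law of large numbers $\sum_i d_i$ is concentrated around $\mu n$; more crudely, $\P_{p,n}(\sum_i d_i \le 2\mu n) \to 1$, so I may condition on the event $\{\sum_i d_i \le 2\mu n\}$, on which the half-edge pool $\mathcal H$ has size at most $2\mu n$ throughout. Next, at the $j$-th matching step ($j \le N$), at most $2(j-1) \le 2N$ half-edges have been used, so they are incident to at most $2N$ distinct vertices; the number of \emph{unused} half-edges attached to those ``touched'' vertices is at most $\sum$ of their degrees. Here is where I need a bound on the maximum degree among touched vertices: by \eqref{eq:ele2}, $p(m) \asymp m^{-a}$, so $\P_{p,n}(\max_{1\le i\le n} d_i > n^{1/(a-1)+\epsilon'}) \to 0$ for any $\epsilon' > 0$ (union bound: $n \cdot C_0 (n^{1/(a-1)+\epsilon'})^{-(a-1)} = C_0 n^{-(a-1)\epsilon'} \to 0$). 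Conditioning also on $\{\max_i d_i \le n^{1/(a-1)+\epsilon'}\}$, the touched vertices carry at most $2N \cdot n^{1/(a-1)+\epsilon'}$ unused half-edges at step $j$, while the pool has at least $2\mu n - 2N$ half-edges. Hence the conditional probability that $h_{2j}$ lands on a touched vertex — which is exactly the event of creating a cycle (or a multi-edge, which also counts) at this step — is at most
$$
\frac{2N \cdot n^{1/(a-1)+\epsilon'}}{2\mu n - 2N}.
$$
Summing over $j = 1,\dots,N$ and using $N = n^\kappa$ with $\kappa$ small, this is at most of order $n^{2\kappa + 1/(a-1) + \epsilon' - 1}$, which tends to $0$ provided $2\kappa + \tfrac{1}{a-1} + \epsilon' < 1$; since $a > 2$ gives $\tfrac{1}{a-1} < 1$, one can choose $\epsilon'$ small and then $\kappa = \kappa(a) > 0$ small enough (e.g.\ $\kappa < \tfrac14\bigl(1 - \tfrac{1}{a-1}\bigr)$) to make this work. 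Adding back the two conditioning events, each of probability $\to 1$, completes the argument.

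The only mildly delicate point — the ``main obstacle,'' though it is routine — is the handling of the maximum degree: one must be slightly careful that the exponent $\tfrac{1}{a-1}$ arising from the power-law tail, when doubled up with the $n^\kappa$ factor counting matchings, still stays strictly below the linear growth $\mu n$ of the pool size. For $a$ close to $2$ this forces $\kappa$ to be correspondingly small, but it is always positive, which is all that is claimed. One should also note that the events being counted include the formation of multi-edges (two edges between the same pair of vertices), not just genuine cycles; since the lemma asserts ``no cycle'' and a double edge is a cycle of length $2$, the bound above covers everything, and in fact the same estimate shows no multi-edge incident to a touched vertex appears either, so the explored neighbourhood is a genuine tree. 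Finally, I would remark that this is exactly the combinatorial input needed later: up to time $n^\kappa$ of exploration, the local structure seen by the infection is a tree, matching the Galton--Watson picture of Proposition~\ref{lem:kGW}.
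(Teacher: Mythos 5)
Your overall strategy is sound and, at its core, the same as the paper's: run the sequential matching, and at each of the first $n^\kappa$ steps bound the conditional probability that the uniformly chosen partner half-edge lands on an already-touched vertex, then sum over steps. Where you genuinely diverge is in how the numerator (unused half-edges attached to touched vertices) is controlled. You use a with-high-probability bound on the \emph{maximum} degree, $\max_i d_i \leq n^{\frac{1}{a-1}+\epsilon'}$, obtained from (\ref{c0a-1}) by a union bound, giving at most $2N\cdot n^{\frac{1}{a-1}+\epsilon'}$ half-edges on touched vertices. The paper instead never bounds the maximum degree: it observes that the touched set $J$ has fewer elements than the set of vertices of degree exceeding $n^{\sigma}$ (with $\sigma=\frac{1}{4(a-1)}$), so $\sum_{i\in J}d_i$ is at most the tail sum $\sum_{i:\,d_i>n^\sigma}d_i\leq n^{1-\frac{\sigma}{2}(a-2)}$, which is controlled directly from the power-law tail. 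Both devices yield a valid $\kappa(a)>0$; yours in fact tolerates any $\kappa<\frac{a-2}{2(a-1)}$, slightly better than the paper's $\frac{a-2}{9(a-1)}$, though only existence of some positive $\kappa$ is claimed, so nothing is gained or lost. Like the paper, you implicitly treat a self-loop at the vertex of the currently selected half-edge as covered by "landing on a touched vertex"; this is harmless (that event has probability of the same order and in the exploration of Lemma \ref{lem:echain} the selected half-edge always sits on a touched vertex), but it is worth stating that the selecting vertex is counted as touched.

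One slip needs fixing: you condition on $\{\sum_i d_i \leq 2\mu n\}$, which bounds the half-edge pool from \emph{above}, but the estimate you then write ("the pool has at least $2\mu n - 2N$ half-edges") requires a \emph{lower} bound on $\sum_i d_i$, and $\P_{p,n}(\sum_i d_i\geq 2\mu n)\to 0$ by the law of large numbers. The argument needs the denominator to be of order $n$, so condition instead on $\{\sum_i d_i > \mu n/2\}$ (which holds with probability tending to $1$, and is exactly what the paper's event $A$ records), or simply note that $p(\{0,1,2\})=0$ forces $\sum_i d_i\geq 3n$ deterministically. With that correction the per-step bound becomes of order $N n^{\frac{1}{a-1}+\epsilon'-1}$ and your exponent bookkeeping $2\kappa+\frac{1}{a-1}+\epsilon'<1$ goes through unchanged.
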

\begin{proof}
Let $\sigma = \frac{1}{4(a-1)},\; \kappa = \frac{a-2}{9(a-1)}$. Define
$$A = \left\{\sum_{i=1}^n d_i > \frac{n\mu}{2},\; |\{i: d_i > n^\sigma\}| > n^{1-2\sigma(a-1)},\;\sum_{i:\; d_i > n^\sigma} d_i \leq n^{1-\frac{\sigma}{2}(a-2)} \right\}.$$
Using the Law of Large Numbers, (\ref{c0a-1}) and (\ref{c0a-2}), we get $\P_{p,n}(A) \to 1$ as $n \to \infty$. Assume $A$ occurs and we have matched $j$ pairs of half-edges, with $j < n^\kappa$. Let $J$ be the set of vertices associated to half-edges that were matched; we have $|J| \leq 2j < 2n^\kappa < n^{1-2\sigma(a-1)} = \sqrt{n}$ since $\kappa <1/2$. Suppose we now choose a half-edge uniformly at random from the set of half-edges that have not yet been matched. The probability that the chosen half-edge belongs to a vertex that is not in $J$ is
$$\frac{\sum_{i:\;v_i \notin J} d_i}{\sum_{i=1}^n d_i - 2j} = 1 -\frac{\sum_{i:\;v_i \in J}d_i - 2j}{\sum_{i=1}^n d_i - 2j} \geq 1 - \frac{\sum_{i:\;v_i \in J}d_i}{n\mu/4}$$
since $\sum_{i=1}^n d_i > n\mu/2$ and $j << n$. Since $|\{i: d_i > n^\sigma\}| > |J|$, the right-hand side is larger than
$$1-\frac{\sum_{i:\;d_i >n^\sigma}d_i}{n\mu/4} > 1 - \frac{n^{1-\frac{\sigma}{2}(a-2)}}{n\mu/4} = 1 -\frac{4}{\mu n^{\frac{\sigma}{2}(a-2)}}.$$
So the probability of forming a cycle in $\lfloor n^\kappa \rfloor$ matchings is less than
$n^\kappa\cdot \frac{4}{\mu n^{(\sigma/2)(a-2)}} \stackrel{n \to \infty}{\xrightarrow{\hspace*{0.8cm}}} 0$
since $\kappa < \frac{\sigma}{2}(a-2)$.
\end{proof}

\begin{lemma}
\label{lem:echain}
There exists $\delta = \delta(a) > 0$ such that the following holds. For any $\epsilon > 0$, there exists $K_0$ such that, for any $K > K_0$ and $n$ large enough,
$$\P_{p,n}\left({\mathop\cap_{k=K}^{\lceil n^\delta \rceil}}\;\mathcal{M}(x_1, a\log_2 k, k)\right) > 1 - \epsilon.$$
\end{lemma}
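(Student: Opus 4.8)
The plan is to construct a candidate ``binary bridge'' inside $G_n$, using the algorithmic (half-edge by half-edge) construction from before Lemma \ref{lem:echainb} together with the hypothesis $p(\{0,1,2\})=0$, which forces exponential growth of balls. Write $v_1$ for the vertex called $x_1$ in the statement. First I fix $\delta=\delta(a)>0$ small, the binding constraints being $a\delta<\kappa$ (with $\kappa$ as in Lemma \ref{lem:echainb}) and $a\delta+\tfrac1{a-1}<1-(a-2)\delta$, and set $D=\lfloor a\delta\log_2 n\rfloor$. Starting from $v_1$, I run a pruned breadth-first exploration: at every vertex of depth $<D$ I match exactly two of its still-unmatched half-edges to uniformly chosen unmatched half-edges, creating two children, and I stop at depth $D$. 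Since every vertex of $G_n$ has degree $\ge3$, every explored vertex has at least two half-edges available beyond the one joining it to its parent, so this produces a candidate binary tree $\mathcal T'$ of depth $D$ with $|\mathcal T'|=2^{D+1}-1\le 2n^{a\delta}$ vertices, built from at most $2n^{a\delta}$ matchings, which is $<n^{\kappa}$ for $n$ large since $a\delta<\kappa$. By Lemma \ref{lem:echainb}, outside an event of probability $o(1)$ none of these matchings closes a cycle; on this event $\mathcal T'$ is a genuine subtree of $G_n$, and $d_{G_n}(v_1,y)$ is at most the depth of $y$ in $\mathcal T'$ for every $y\in\mathcal T'$.

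The second ingredient is the standard configuration-model degree estimate (essentially as in \cite{CD}). On an event of probability $1-o(1)$ the degree sequence is regular, in the sense that $\#\{i:d_i>m\}\ge\tfrac34\, n\, p[m+1,\infty)$ simultaneously for all $m\le n^{\delta}$, and moreover $\max_i d_i\le n^{1/(a-1)+o(1)}$. On this event, conditionally on any history of the exploration having used at most $2n^{a\delta}$ matchings, a freshly matched half-edge lands on a not-yet-seen vertex of degree $>m$ with conditional probability at least
$$\frac{\sum_{i:d_i>m}d_i-\sum_{y\in\mathcal T'}d_y}{\sum_i d_i}\ \ge\ \frac{\tfrac34\, m\, n\, p[m+1,\infty)-|\mathcal T'|\cdot\max_i d_i}{\sum_i d_i}\ \ge\ c\, m^{-(a-2)}$$
uniformly in $m\le n^{\delta}$, where we used \reff{c0a-1}, \reff{c0a-2}, that $\sum_i d_i\asymp n$, and the constraint on $\delta$ to make the subtracted term $|\mathcal T'|\cdot\max_i d_i=n^{a\delta+1/(a-1)+o(1)}$ negligible against $m\, n\, p[m+1,\infty)\asymp n\, m^{-(a-2)}$. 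Reading off the non-root vertices of $\mathcal T'$ in exploration order, it follows that the indicators $\{\deg(\cdot)>m\}$ stochastically dominate i.i.d.\ $\mathsf{Bernoulli}(c\, m^{-(a-2)})$.

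Combining the two pieces, fix $k$ with $K\le k\le\lceil n^{\delta}\rceil$ and put $\ell_k=\lfloor a\log_2 k\rfloor\le D$. On the good event the sub-binary-tree of $\mathcal T'$ of depth $\ell_k$ has at least $2^{\ell_k}\ge\tfrac12 k^{a}$ non-root vertices, all at graph distance $\le\ell_k\le a\log_2 k$ from $v_1$, so the probability that none of them has degree $>k$ is at most $(1-c\, k^{-(a-2)})^{k^{a}/2}\le\exp(-\tfrac c2 k^{2})$. A union bound then gives
$$\P_{p,n}\Big(\,\bigcup_{k=K}^{\lceil n^{\delta}\rceil}\mathcal M(v_1,a\log_2 k,k)^{c}\Big)\ \le\ o(1)+\sum_{k\ge K}\exp\!\Big(-\tfrac c2 k^{2}\Big);$$
since the series is the tail of a convergent sum, it falls below $\epsilon/2$ once $K\ge K_0$ for a suitable $K_0=K_0(\epsilon)$ not depending on $n$, and then for $n$ large the $o(1)$ term is $<\epsilon/2$, completing the proof.

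The step I expect to be the main obstacle is the second ingredient: making the conditional degree lower bound hold uniformly over all thresholds $m$ up to $n^{\delta}$ and throughout the exploration. This is exactly where the power-law lower estimates \reff{c0a-1}--\reff{c0a-2}, the concentration of the i.i.d.\ degree sequence, and the choice of $\delta$ have to be used together; the reason for pruning to a binary tree, rather than exploring full balls $B(v_1,R)$ as one might first attempt, is precisely to keep the exposed part of $G_n$ small enough for this argument and to remain in the no-cycle regime of Lemma \ref{lem:echainb} — a full ball of radius $\asymp\log n$ would be far too large when $a\le 3$.
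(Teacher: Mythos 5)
Your proposal is correct and follows essentially the same route as the paper's proof: an adaptive half-edge exploration from $v_1$ kept below the $n^\kappa$-matching threshold so that Lemma \ref{lem:echainb} rules out cycles, a high-probability concentration event for the empirical degree sequence giving a uniform lower bound on the chance that each newly revealed vertex has degree $>k$, exponential growth of the explored tree (minimum degree $3$) to place at least $k^a/2$ such trials within distance $a\log_2 k$ of $v_1$, and a union bound over $k$ with a summable tail. The differences are only cosmetic: you prune to a binary tree and control the already-exposed half-edge mass via a maximum-degree bound to get the per-step probability $c\,k^{-(a-2)}$, whereas the paper explores full neighbourhoods and instead conditions on no vertex of degree $>k$ having been found yet, which yields the cruder but sufficient bound $k^{-(a-1)}$ and the tail $e^{-k}$ in place of your $e^{-ck^2/2}$.
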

\begin{proof}
Let $\kappa$ be as in the above lemma; set $\delta = \frac{\kappa}{a}$ and $N_n = \lceil n^\delta \rceil$. Define the event
$$A'(K) = {\mathop \cap_{k=K}^{N_n}}\left\{\frac{\sum_{i:\; d_i \geq k}\; d_i}{\sum_{i=1}^n d_i} > \frac{1}{k^{a-1}} \right\}.$$
We have
$$\P_{p,n}(A'(K)) \geq 1 - \P_{p,n}\left(\sum_{i=1}^n d_i > 2n\mu\right) - \sum_{k=K}^{N_n} \P_{p,n}\left(\sum_{i:\;d_i \geq k} d_i \leq \frac{2n\mu}{k^{a-1}}\right).$$
For fixed $k$, we have
$$\P_{p,n}\left(\sum_{i:\;d_i \geq k} d_i \leq \frac{2n\mu}{k^{a-1}} \right) \leq \P_{p,n}\left(|\{i: d_i \geq k\}|\leq \frac{2n\mu}{k^a} \right).$$

Now, letting $X \sim \mathsf{Bin}(n, p[k, \infty))$, the probability in the right-hand side is less than 
$$\P\left(X \leq 2n\mu/k^{a}\right) \leq e^{-c\frac{n}{k^{a}}};$$
by (\ref{c0a-1}) and (\ref{mark}). We have thus shown that ${\displaystyle\liminf_{n \to \infty}}\; \P_{p,n}(A'(K)) \geq 1 - \sum_{k=K}^{N_n}e^{-cnk^{-a}} > 1 - \epsilon$ when $K$ is large enough. Fix one such $K$.

We now start matching half-edges; we first match all half-edges incident to $v_1$, then the half-edges incident to the neighbours of $v_1$, and so on. We continue until either a cycle is formed with the edges that we have built (call this a \textit{failed exploration}) or we have revealed more than $n^\kappa$ vertices (a \textit{successful exploration}). By the above lemma, as $n \to \infty$, with high probability we have a successful exploration. We remark that, since all vertices have degree larger than 2, in a successful exploration we reveal at least $2^i$ vertices at distance $i$ from $v_1$, for $0 \leq i \leq \lfloor \log_2 n^\kappa \rfloor$.

Assume $A'(K)$ occurs and let $k \in [K, N_n]$. If at some point in the exploration, $j$ matchings have already been made and no vertex of degree larger than $k$ has been found, then the probability that the next revealed vertex has degree larger than $k$ is larger than $\left(\sum_{i:\;\deg(v_i) \geq k}d_i\right)/\left(\sum_{i=1}^n d_i - 2j\right) \geq k^{-(a-1)}$. Thus,
$$\begin{aligned}&\P_{p,n}\left( \mathcal{M}(v_1, \log_2 k, k)\;\left|\;A'(K) \cap \left\{\text{Successful exploration}\right\}\right. \right)\\
&\geq \P_{p,n}\left(\begin{array}{c}\text{A vertex of degree larger than $k$ is}\\\text{found in $k^a$ steps in the exploration} \end{array}\left|\;A'(K)\cap\left\{\begin{array}{c}\text{Successful}\\\text{exploration} \end{array} \right\} \right.\right)\\
&\geq 1-(1-k^{-(a-1)})^{k^a} \geq 1-e^{-k}.
\end{aligned}$$
This completes the proof.
\end{proof}

\bprlem{lem:NGn}
Fix $\epsilon, \lambda$ and $(t_n)$ as in the statement of the lemma. Since for $R$ large enough, ${\displaystyle\lim_{n \to \infty}}\; \P^\lambda_{p,n}\left(\mathcal{N}(v_1, R, R^2)\right) = \Q_{p,q}^\lambda\left(\mathcal{N}(o, R, R^2)\right) > \frac{1}{2}\;\Q_{p,q}\left(\xi^o_t \neq \varnothing\; \forall t \right) > 0,$
the lemma will follow if we prove that for $R$ large enough,
\begin{equation}\limsup_{n \to \infty} \;\P_{p,n}^\lambda\left(\mathcal{N}(v_1, R, R^2)\cap \left\{\xi^{v_1}_{{t_n}}= \varnothing\right\} \right) < \epsilon. \label{eq:auxSuf1}\end{equation}
Recall that, if $\mathcal{N}(v_1, R, R^2)$ occurs, then there exist $y^*, t^*$ so that $d(v_1, y^*)<R,\;\deg(y^*) > R^2$ and $\frac{|\xi^{v_1}_{t^*}\;\cap\; B(y^*,1)|}{|B(y^*,1)|} > \frac{\min(\lambda, \lambda_0)}{16e}$. So, to prove (\ref{eq:auxSuf1}) it suffices to prove that for $R$ large enough,
\begin{equation}
\liminf_{n \to \infty}\; \P_{p,n}\left(\begin{array}{c}
\text{for all $y^* \in B(v_1, R)$ with $\deg(y^*) > R^2$},\medskip\\\frac{|\xi_{0}\;\cap\; B(y^*,1)|}{|B(y^*,1)|} > \frac{\min(\lambda, \lambda_0)}{16e} \Longrightarrow P^\lambda_{G_n}\left(\xi^{y^*}_{t_n} \neq \varnothing\right)> 1-\epsilon
\end{array}\right) > 1-\epsilon.\label{eq:auxNew}
\end{equation}
Also, it is enough to prove (\ref{eq:auxNew}) under the assumption that $\lambda$ is small enough, so we take $\lambda < \lambda_0$, where $\lambda_0$ is as in Lemma \ref{lem:infNei}.

Fix $\delta > 0$ and $K_0$ corresponding to $\epsilon/2$ in Lemma \ref{lem:echain}. Then take $K \geq K_0$ such that
\begin{equation} 2\sum_{k=K}^\infty e^{-c_1\lambda^2 k} < \frac{\epsilon}{3}, \qquad k > \frac{3}{c_1} \cdot\left(\frac{1}{\lambda} \log \frac{1}{\lambda} \right)\cdot 2a\log_2(k+1) \; \forall k \geq K.\label{eq:pNGn1}\end{equation}
Next, choose $R > 0$ such that
\begin{equation}2e^{-c_1\lambda^2R^2} < \frac{\epsilon}{3},\qquad R^2>\frac{3}{c_1}\cdot \frac{1}{\lambda^2}\log\frac{1}{\lambda}\cdot (R+a\log_2K). \label{eq:pNGn2}\end{equation}
Now define the events for the graph $G_n$:
$$B_1 = \left\{\min_{v \in V_n:\; \deg(v) \geq n^\delta} P^\lambda_{G_n}\left(\xi^v_{t_n} \neq \varnothing\right) > 1-\frac{\epsilon}{3} \right\};\qquad B_2 ={\mathop \cap_{k=K}^{\lceil n^\delta\rceil}}\;\mathcal{M}(v_1, a\log_2 k, k)$$

By Lemma \ref{lem:SurCD} and the choice of $K_0$, when $n$ is large enough we have $\P_{p,n}(B_1 \cap B_2) > 1-\epsilon$. Assume $B_1\cap B_2$ occurs and fix $y^*$ with $\deg(y^*)>R^2$ and $d(v_1, y^*) < R$; let us now prove that, if $\frac{|\xi_0 \cap B(y^*,1)|}{\lambda\cdot |B(y^*,1)|} > \frac{1}{16e}$, then $P^\lambda_G(\xi^{y^*}_{t_n} \neq \varnothing) > 1-\epsilon$. Since $B_2$ occurs, we can take $z_K^*, z_{K+1}^*,\ldots, z^*_{\lceil n^\delta \rceil}$ such that $\deg(z^*_k) \geq k,\; d(v_1, z^*_k) \leq a\log_2 k$. Now, by (\ref{eq:pNGn1}) and (\ref{eq:pNGn2}),
$$\deg(y^*) > \frac{3}{c_1}\cdot \frac{1}{\lambda^2}\log\frac{1}{\lambda}  \cdot d(y^*, z_K^*) \text{ and } \deg(z^*_k) > \frac{3}{c_1}\cdot \frac{1}{\lambda^2}\log\frac{1}{\lambda} \cdot d(z^*_k, z^*_{k+1}) \text{ for all } k \geq K,$$
so Lemma \ref{lem:infNei} can be used repeatedly to guarantee that the infection is transmitted from $y^*$, through $z^*_K, z^*_{K+1},\ldots$ until $z^*_{\lceil n^\delta \rceil}$ with probability larger than
$$1-2e^{-c_1\lambda^2\deg(y^*)} -2\sum_{k=K}^{\lceil n^\delta \rceil } e^{-c_1\lambda^2\deg(z^*_k)} > 1-\frac{2}{3}\epsilon,$$ 
To conclude, by the definition of $B_1$, the infection then survives until time $t_n$ with probability larger than $1-\epsilon$.

\eprlem


\end{document}